\newtheorem{thm}{Theorem}
\crefname{theorem}{Theorem}{Theorems}
\Crefname{theorem}{Theorem}{Theorems}
\crefname{corollary}{Corollary}{Corollaries}
\Crefname{corollary}{Corollary}{Corollaries}
\Crefname{chapter}{Chapter}{Chapters}
\crefname{chapter}{Chapter}{Chapters}
\newtheorem{ex}{Example}
\crefname{example}{example}{examples}
\Crefname{example}{Example}{Examples}
\newaliascnt{lemma}{index}
\newtheorem{lemma}[lemma]{Lemma}
\crefname{lemma}{Lemma}{lemmas}
\Crefname{Lemma}{Lemma}{Lemmas}
\newaliascnt{proposition}{index}
\newtheorem{prop}[proposition]{Proposition}
\crefname{proposition}{Proposition}{Propositions}
\Crefname{Proposition}{Proposition}{Propositions}
\newaliascnt{corollary}{index}
\crefname{corollary}{corollary}{corollaries}
\Crefname{Corollary}{Corollary}{Corollaries}
\newaliascnt{definition}{index}
\newtheorem{defi}[definition]{Definition}
\crefname{definition}{definition}{definitions}
\Crefname{Definition}{Definition}{Definitions}
\newaliascnt{remark}{index}
\newtheorem{rem}[remark]{Remark}
\crefname{remark}{remark}{remarks}
\Crefname{remark}{Remark}{Remarks}
\newenvironment{enumerateList}{ \begin{enumerate}[label=(\roman*),wide=0pt, labelindent=\parindent]}{\end{enumerate}}
\newtheoremstyle{styledef}
  {6pt}
  {6pt}
  {\sffamily}
  {0em}
  {\bfseries}
  {}
  {1.5em}
  {}
\theoremstyle{styledef}
\newenvironment{hyp}[1]{
\begin{enumerate}[label=\textbf{\sf(#1\arabic*)},resume=hyp#1]\begin{sf}}
{\end{sf}\end{enumerate}}
\crefname{hyp}{}{ass}
\Crefname{hyp}{}{Ass}
\newcommandx{\admiss}[1][1=f]{\mathsf{A}_{#1}}
\newcommand{\arginf}{\mathrm{arginf}}
\newcommand{\as}{\mathrm{a.s.}}
\newcommandx{\binfty}[1][1=\alpha]{|b|_{\infty, #1}}
\newcommandx{\balpha}[1][1=\alpha]{|b|_{#1}}
\newcommandx{\bmuf}[2][1=\mu, 2=\alpha]{ {b_{#1, #2}}}
\newcommandx{\bmufk}[2][1=\mu, 2=\alpha]{\hat{b}_{#1, #2, M}}
\newcommandx{\couple}[2][1=\PQ, 2=\PP]{(#1 || #2)}
\newcommand{\Cov}{\mathbb{C}\mathrm{ov}}
\newcommand{\cte}{\kappa}
\newcommandx{\cteLipSto}[2][1=N, 2=M]{|b|_{\hmu_{#1}, #2, \alpha}}
\newcommandx{\cteLipStoInf}[1][1=\alpha]{B_{#1}}
\newcommandx{\ctemono}[1][1=\alpha]{L_{#1, 1}}
\newcommandx{\cteinf}[1][1=\alpha]{L_{#1, 2}}
\newcommandx{\ctesup}[1][1=\alpha]{L_{#1, 3}}
\newcommand{\data}{\mathscr{D}}
\newcommand{\dlim}{\Rightarrow}
\newcommandx{\diverg}[1][1=\alpha]{D_{#1}}
\newcommandx{\Domain}[1][1=\alpha]{\mathrm{Dom}_{#1}}
\newcommand{\eqdef}{:=}
\newcommand{\eqsp}{\;}
\newcommandx{\falpha}[1][1=\alpha]{f_{#1}}
\newcommandx{\aei}[1][1=\alpha]{$(#1, \Gamma)$-}
\newcommandx{\GammaAlpha}[1][1=\alpha]{ \Gamma}
\renewcommand{\geq}{\geqslant}
\newcommandx{\gmuf}[1][1=\mu]{ {g_{#1}}}
\newcommand{\hmu}{\hat{\mu}}
\newcommandx{\hbinfty}[1][1=\alpha]{|\hat{b}|_{\infty, #1}}
\newcommand{\indi}[1]{\mathbf{1}_{#1}}
\newcommand{\indiacc}[1]{\mathbf{1}_{\{#1\}}}
\newcommandx{\iteration}[1][1=\alpha]{\mathcal{I}_{#1}}
\newcommandx{\iterationK}[1][1=\alpha]{\hat{\mathcal{I}}_{#1, M}}
\newcommandx{\lbd}[2][1=]{
    \ifthenelse{\equal{#1}{}}
    {{\boldsymbol{\lambda}_{#2}}}
    {\lambda_{#1,#2}}
    }
\renewcommand{\leq}{\leqslant}
\newcommand{\llim}{\lim \limits}
\newcommand{\lliminf}{\liminf \limits}
\newcommand{\llimsup}{\limsup \limits}
\newcommand{\lr}[1]{\left(#1 \right)}
\newcommand{\lrb}[1]{\left[#1 \right]}
\newcommand{\lrc}[1]{\left\{#1 \right\}}
\newcommand{\lrcb}[1]{\left\{#1 \right\}}
\newcommand{\mcf}{\mathcal{F}}
\newcommand{\meas}[1]{\mathrm{M}_{#1}}
\newcommand{\muf}{{\mu^\star}}
\newcommand{\mubar}{{\bar{\mu}}}
\newcommand{\nset}{\mathbb N}
\newcommand{\nstar}{\mathbb{N}^\star}
\newcommand{\PE}{\mathbb E}
\newcommandx{\posterior}[1][1=y]{p(#1|\mathscr{D})}
\newcommand{\PP}{\mathbb P}
\newcommand{\PQ}{\mathbb Q}
\newcommandx{\Psifn}[1][1=\alpha]{\Psi_{#1, L}}
\newcommandx{\Psif}[1][1=\alpha]{\Psi_{#1}}
\newcommandx{\Psifk}[3][1=\alpha, 2=N, 3=M]{\hat{\Psi}_{#1, #3, #2}}
\renewcommand{\rho}{\varrho}
\newcommand{\rmd}{\mathrm d}
\newcommand{\Rset}{\mathbb{R}}
\newcommand{\rset}{\mathbb{R}}
\newcommand{\set}[2]{\lrc{#1\eqsp: \eqsp #2}}
\newcommand{\simplex}{\mathcal{S}}
\newcommand{\tgamma}{{\tilde{\Gamma}}}
\newcommandx{\thetat}[2][1=j, 2=t]{\theta_{#1,#2}}
\newcommand{\tv}[1]{\left\|#1\right\|_{TV}}
\newcommand{\Tset}{{\mathsf{T}}}
\newcommand{\Tsigma}{\mathcal{T}}
\newcommand{\Yset}{\mathsf Y}
\newcommand{\Ysigma}{\mathcal Y}
\newcommand{\Var}{\mathbb{V}\mathrm{ar}}
\begin{document}

\title{Infinite-dimensional gradient-based descent for alpha-divergence minimisation}

\author{
  Kam\'elia~Daudel \\
LTCI, Télécom Paris \\
Institut Polytechnique de Paris, France \\
\texttt{kamelia.daudel@telecom-paris.fr} \\
 \And
Randal~Douc \\
SAMOVAR, Télécom SudParis \\
Institut Polytechnique de Paris, France \\
\texttt{randal.douc@telecom-sudparis.eu} \\
\AND
François Portier \\
LTCI, Télécom Paris \\
Institut Polytechnique de Paris, France \\
\texttt{francois.portier@telecom-paris.fr} \\
}

\maketitle

\def\abstractname{}
\begin{abstract}
This paper introduces the \aei descent, an iterative algorithm which operates on measures and performs $\alpha$-divergence minimisation in a Bayesian framework. This gradient-based procedure extends the commonly-used variational approximation by adding a prior on the variational parameters in the form of a measure. We prove that for a rich family of functions $\GammaAlpha$, this algorithm leads at each step to a systematic decrease in the $\alpha$-divergence and derive convergence results. Our framework recovers the Entropic Mirror Descent algorithm and provides an alternative algorithm that we call the Power Descent. Moreover, in its stochastic formulation, the \aei descent allows to optimise the mixture weights of any given mixture model without any information on the underlying distribution of the variational parameters. This renders our method compatible with many choices of parameters updates and applicable to a wide range of Machine Learning tasks. We demonstrate empirically on both toy and real-world examples the benefit of using the Power descent and going beyond the Entropic Mirror Descent framework, which fails as the dimension grows.
\end{abstract}

\keywords{Alpha-divergence \and Kullback-Leibler divergence \and Mirror Descent \and Variational Inference}


\section{Introduction}

Bayesian statistics for complex models often induce intractable and hard-to-compute posterior densities which need to be approximated. Variational methods such as Variational Inference (VI) \cite{Jordan1999, beal.phd} and Expectation Propagation (EP) \cite{Opper:2000:GPC:1121900.1121911, Minka:2001:EPA:2074022.2074067} consider this objective purely as an optimisation problem (which is often non-convex). These approaches seek to approximate the posterior density by a simpler variational density $k_\theta$, characterized by a set of variational parameters $\theta \in \Tset$, where $\Tset$ is the parameter space. In these methods $\theta$ is optimised such that it minimizes a certain objective function, typically the Kullback-Leibler divergence \cite{kullback1951} between the posterior and the variational density.

Modern Variational methods improved in three major directions \cite{2016arXiv160100670B, 2017arXiv171105597Z} (i) Black-Box inference techniques \cite{2012arXiv1206.6430P, 2014arXiv1401.0118R} and Hierarchical Variational Inference methods \cite{pmlr-v48-ranganath16, pmlr-v80-yin18b} have been deployed, expanding the variational family and rendering Variational methods applicable to a wide range of models (ii) Algorithms based on alternative families of divergences such as the $\alpha$-divergence \cite{zhu-rohwer-alpha-div, ZhuRohwer} and Renyi's $\alpha$-divergence \cite{renyi1961,2012arXiv1206.2459V} have been introduced \cite{divergence-measures-and-message-passing, power-ep, 2015arXiv151103243H, 2016arXiv160202311L, NIPS2017_6866, NIPS2017_7093, NIPS2018_7816} to bypass practical issues linked to the Kullback-Leibler divergence \cite{Minka:2001:EPA:2074022.2074067, 2016arXiv160100670B, JMLR:v14:hoffman13a} (iii) Scalable methods relying on stochastic optimisation techniques \cite{10.1007/978-3-7908-2604-3_16, robbins1951} have been developed to enable large-scale learning and have been applied to complex probabilistic models \cite{JMLR:v14:hoffman13a,sep, 2015arXiv150308060D,LDA2001}.

In the spirit of Hierarchical Variational Inference, we offer in this paper to enlarge the variational family by adding a prior on the variational density $k_\theta$ and considering
$$
q(y) = \int_\Tset \mu(\rmd \theta) k_\theta(y) \eqsp,
$$
which is a more general form compared to the one found in \cite{pmlr-v80-yin18b} where $\mu$ is parametrised by another parametric model. As for the objective function, we work within the $\alpha$-divergence family, which admits the forward Kullback-Leibler and the reverse Kullback-Leibler as limiting cases. These divergences belong to the $f$-divergence family \cite{csiszar,morimoto} and as such, they have convexity properties so that the minimisation of the $\alpha$-divergence between the targeted posterior density and the variational density $q$ with respect to $\mu$ can be seen as a convex optimisation problem. \newline

The paper is then organised as follows:
\begin{itemize}[wide=0pt, labelindent=\parindent]
\item In \Cref{section:optim:problem}, we briefly review basic concepts around the $\alpha$-divergence family before recalling the basics of Variational methods and formulating formally the optimisation problem we consider.
\item In \Cref{section:aei}, we describe the Exact \aei descent, an iterative algorithm that performs $\alpha$-divergence minimisation by updating the measure $\mu$. We establish in \Cref{thm:monotone} sufficient conditions on $\GammaAlpha$ for this algorithm to lead at each step to a systematic decrease in the $\alpha$-divergence. We then investigate the convergence of the algorithm in \Cref{thm:admiss}, \ref{thm:admiss:spec} and \ref{thm:repulsive}. Strikingly, the Infinite-dimensional Entropic Mirror Descent \cite[Appendix A]{pmlr-v97-hsieh19b} is included in our framework and we obtain an $O(1/N)$ convergence rate under minimal assumptions, which improves on existing results and illustrates the generality of our approach. We also introduce a novel algorithm called the Power Descent, for which we prove convergence to an optimum and obtain an $O(1/N)$ convergence rate when $\alpha > 1$.
\item In \Cref{sec:sto}, we define the Stochastic version of the Exact \aei descent and apply it to the important case of mixture models \cite{Jaakkola98improvingthe, DBLP:journals/corr/abs-1206-4665}. The resulting general-purpose algorithm is Black-Box and does not require any information on the underlying distribution of the variational parameters. This algorithm notably enjoys an $O(1/\sqrt{N})$ convergence rate in the particular case of the Entropic Mirror Descent if we know the stopping time of the algorithm (\Cref{thm:emd:sto}).
\item Finally, \Cref{section:applications} is devoted to numerical experiments. We demonstrate the benefit of using the Power Descent and thus of going beyond the Entropic Mirror Descent framework. We also compare our method to a computationally equivalent Adaptive Importance Sampling algorithm for Bayesian Logistic Regression on a large dataset.
\end{itemize}

\section{Formulation of the optimisation problem}
\label{section:optim:problem}

\subsection{The $\alpha$-divergence}
Let $(\Yset,\Ysigma, \nu)$ be a measured space, where $\nu$ is a $\sigma$-finite measure on $(\Yset, \Ysigma)$. Let $\PQ$ and $\PP$ be two probability measures on $(\Yset, \Ysigma)$ that are absolutely continuous with respect to $\nu$ i.e. $\PQ \preceq \nu$, $\PP \preceq \nu$. Let us denote by $q = \frac{\rmd \PQ}{\rmd \nu}$ and $p = \frac{\rmd \PP}{\rmd \nu}$ the Radon-Nikodym derivatives of $\PQ$ and $\PP$ with respect to $\nu$.


\begin{defi}\label{def:div} Let $\alpha \in \Rset \setminus \lrcb{0,1}$. The $\alpha$-divergence and the Kullback-Leibler (KL) divergence between $\PQ$ and $\PP$ are respectively defined by :
\begin{align*} 
\diverg \couple[\PQ][\PP] &= \int_\Yset \frac{1}{\alpha(\alpha-1)} \left[\left(\dfrac{q(y)}{p(y)}\right)^{\alpha} -1 \right] p(y) \nu(\rmd y)\eqsp, \\
\diverg[KL] \couple[\PQ][\PP] &= \int_\Yset \log\left(\dfrac{q(y)}{p(y)}\right) q(y) \nu(\rmd y) \eqsp,
\end{align*}
wherever they are well-defined (and otherwise we write $+ \infty$).
\end{defi}



%


As $\lim_{\alpha \to 0} \diverg\couple[\PQ][\PP] = \diverg[KL]\couple[\PP][\PQ]$ and $\lim_{\alpha \to 1} \diverg\couple[\PQ][\PP] = \diverg[KL]\couple[\PQ][\PP]$ (see for example \cite{2012arXiv1206.2459V}), the definition of the $\alpha$-divergence can be extended to $0$ and $1$ by continuity and we will use the notation $\diverg[0] \couple[\PQ][\PP] = \diverg[KL]\couple[\PP][\PQ]$ and $\diverg[1]\couple[\PQ][\PP] = \diverg[KL]\couple[\PQ][\PP]$ throughout the paper. Letting $\falpha$ be the convex function on $(0, +\infty)$ defined by $\falpha[0](u) = u-1 -\log(u)$, $\falpha[1](u) = 1 - u + u\log(u)$ and $\falpha(u) = \frac{1}{\alpha(\alpha-1)} \left[  u^\alpha -1 - \alpha(u-1) \right]$ for all $\alpha \in \rset \setminus \lrcb{0,1}$, we have that for all $\alpha \in \rset$,
\begin{align}\label{eq:gen:div}
\diverg\couple[\PQ][\PP] = \int_\Yset \falpha\left(\frac{q(y)}{p(y)} \right)p(y) \nu(\rmd y) \eqsp.
\end{align}
Written under that form, the r.h.s of \eqref{eq:gen:div} corresponds to the general definition of the $\alpha$-divergence, that is $q$ and $p$ do not need to be normalised in \eqref{eq:gen:div} in order to define a divergence. We next remind the reader of a few more results about the $\alpha$-divergence and we refer to \cite{2012arXiv1206.2459V, alpha-beta-gamma, Cichocki_2011, DBLP:journals/corr/abs-1804-06334} for more details on the $\alpha$-divergence family.

\begin{prop}\label{prop:f-div} The $\alpha$-divergence is always non-negative and it is equal to zero if and only if $\PQ=\PP$. Furthermore, it is jointly convex in $\PQ$ and $\PP$ and for all $\alpha \in \rset$, $\diverg\couple[\PQ][\PP] = \diverg[1-\alpha]\couple[\PP][\PQ]$. 
\end{prop}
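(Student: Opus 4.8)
The plan is to read off all four assertions from the $f$-divergence representation \eqref{eq:gen:div}, whose integrand is controlled by the generator $\falpha$. First I would record the elementary facts that drive everything: a direct computation gives $\falpha(1)=0$, $\falpha'(1)=0$, and, uniformly in $\alpha \in \rset$, $\falpha''(u)=u^{\alpha-2}>0$ on $(0,+\infty)$. Hence each $\falpha$ is strictly convex and attains its unique minimum value $0$ at $u=1$, so that $\falpha \geq 0$ with equality if and only if its argument equals $1$.

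Non-negativity and the equality case then follow at the level of the integrand. Since $\falpha(q(y)/p(y)) \geq 0$ wherever $p(y)>0$, the integrand $\falpha(q/p)\,p$ in \eqref{eq:gen:div} is non-negative $\nu$-a.e. and therefore $\diverg\couple[\PQ][\PP] \geq 0$. If $\PQ=\PP$ then $q=p$ $\nu$-a.e. and the integrand vanishes, giving $\diverg\couple[\PQ][\PP]=0$; conversely, if the integral is zero then the non-negative integrand must vanish $\nu$-a.e., and strict convexity forces $q/p=1$ on $\{p>0\}$, whence $q=p$ $\nu$-a.e. once the contribution of $\{p=0\}$ is ruled out using the normalisations $\int_\Yset q\,\rmd\nu = \int_\Yset p\,\rmd\nu = 1$. (Equivalently, one may apply Jensen's inequality to the convex $\falpha$ under $\PP$, using $\PE_{\PP}[q/p] = \int_\Yset q\,\rmd\nu = 1$.)

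For joint convexity I would invoke the perspective construction: the map $(x,t)\mapsto t\,\falpha(x/t)$ on $(0,+\infty)^2$ is the perspective of the convex function $\falpha$ and is therefore jointly convex. This is verified by setting $t=\lambda t_1+(1-\lambda)t_2$, writing the argument $(\lambda x_1+(1-\lambda)x_2)/t$ as the convex combination $\tfrac{\lambda t_1}{t}\tfrac{x_1}{t_1}+\tfrac{(1-\lambda)t_2}{t}\tfrac{x_2}{t_2}$, applying convexity of $\falpha$, and multiplying through by $t$. Since $\diverg\couple[\PQ][\PP]=\int_\Yset t\,\falpha(x/t)\,\rmd\nu$ evaluated at $(x,t)=(q,p)$, since integration preserves convexity, and since $(\PQ,\PP)\mapsto(q,p)$ is affine, joint convexity in $(\PQ,\PP)$ follows.

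Finally, the duality $\diverg\couple[\PQ][\PP]=\diverg[1-\alpha]\couple[\PP][\PQ]$ I would obtain by direct rewriting. For $\alpha\notin\{0,1\}$, using $\int_\Yset p\,\rmd\nu = 1$ gives $\diverg\couple[\PQ][\PP]=\tfrac{1}{\alpha(\alpha-1)}\big(\int_\Yset q^{\alpha}p^{1-\alpha}\,\rmd\nu - 1\big)$; the same computation for $\diverg[1-\alpha]\couple[\PP][\PQ]$ yields the identical prefactor, since $(1-\alpha)(-\alpha)=\alpha(\alpha-1)$, and the same symmetric integral $\int_\Yset q^{\alpha}p^{1-\alpha}\,\rmd\nu$ after using $\int_\Yset q\,\rmd\nu = 1$, so the two coincide. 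The boundary values $\alpha\in\{0,1\}$ reduce to tautologies under the conventions $\diverg[0]\couple[\PQ][\PP]=\diverg[KL]\couple[\PP][\PQ]$ and $\diverg[1]\couple[\PQ][\PP]=\diverg[KL]\couple[\PQ][\PP]$. The only genuinely delicate point is the reverse direction of the equality case, where one must upgrade the $\nu$-a.e. vanishing of the integrand to $\PQ=\PP$ despite the behaviour on $\{p=0\}$; the remaining steps are routine verifications.
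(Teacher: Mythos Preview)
The paper does not prove \Cref{prop:f-div}; it is stated as background and the reader is referred to the $\alpha$-divergence literature (e.g.\ \cite{2012arXiv1206.2459V, alpha-beta-gamma, Cichocki_2011, DBLP:journals/corr/abs-1804-06334}) for details. So there is no in-paper proof to compare against.

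Your proposal is correct and is precisely the standard argument one finds in those references: strict convexity of $\falpha$ with minimum $0$ at $u=1$ (via $\falpha''(u)=u^{\alpha-2}$) gives non-negativity and the equality case pointwise; the perspective construction $(x,t)\mapsto t\,\falpha(x/t)$ yields joint convexity; and the duality identity is a direct rewriting of the integrand using $(1-\alpha)(-\alpha)=\alpha(\alpha-1)$ together with the boundary conventions at $\alpha\in\{0,1\}$. The only place requiring care is, as you flag, upgrading vanishing of the integrand on $\{p>0\}$ to $\PQ=\PP$; your argument via the normalisation $\int_\Yset q\,\rmd\nu=\int_\Yset p\,\rmd\nu=1$ handles this correctly.
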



Special cases of the $\alpha$-divergence family include the Hellinger distance \cite{Hellinger1909, lindsay1994} and the $\chi^2$-divergence \cite{NIPS2017_6866} which correspond respectively to order $\alpha = 0.5$ and $\alpha = 2$.


\subsection{Variational Inference within the $\alpha$-divergence family}

Assume that we have access to some observed variables $\data$ generated from a probabilistic model $p(\data|y)$ parameterised by a hidden random variable $y \in \Yset$ that is drawn from a certain prior $p_0(y)$. Bayesian inference involves being able to compute or sample from the posterior density of the latent variable $y$ given the data $\data$:
$$
p(y | \data) = \frac{p(y, \data)}{p(\data)} = \frac{ p_0(y) p(\data|y)}{p(\data)} \eqsp,
$$
where $p(\data) = \int_\Yset p_0(y) p(\data|y) \nu(\rmd y)$ is called the {\em marginal likelihood} or {\em model evidence}. For many useful models the posterior density is intractable due to the normalisation constant $p(\data)$. One example of such a model is Bayesian Logistic Regression for binary classification.

\begin{ex}[Bayesian Logistic Regression]\label{ex:BLR} We use the same setting as in \cite{gershman2012nonparametric}. We observe the data $\data = \lrcb{\boldsymbol{c}, \boldsymbol{x}}$ which is made of $I$ binary class labels, $c_i \in \lrcb{-1, 1}$, and of $L$ covariates for each datapoint, $\boldsymbol{x}_i \in \Rset^L$. The hidden
variables $y = \lrcb{\boldsymbol{w}, \beta}$ consist of $L$ regression coefficients $w_l \in \Rset$, and a precision parameter $\beta \in \Rset^+$. We assume the following model
\begin{align*}
  &p_0(\beta) = \mathrm{Gamma}(\beta; a, b) \eqsp , \\
  &p_0(w_l| \beta) = \mathcal{N}(w_l; 0, \beta^{-1}) \eqsp, \quad 1 \leq l \leq L \eqsp, \\
  &p(c_i = 1 | \boldsymbol{x}_i, \boldsymbol{w}) = \frac{1}{1 + e^{- \boldsymbol{w}^T \boldsymbol{x}_i}} \eqsp,  \quad 1 \leq i \leq I \eqsp,
\end{align*}
where $a$ and $b$ are hyperparameters (shape and inverse scale, respectively) that we assume to be fixed. We thus have $p(y, \data) = p_0(y) \prod_{i=1}^I p(c_i | \boldsymbol{x}_i , y)$ with $p_0(y) = \prod_{l=1}^L p_0(w_l|\beta) p_0(\beta)$ and as the sigmoid does not admit a conjugate exponential prior, $p(\data)$ is intractable in this model.
\end{ex}
One way to bypass this problem is to introduce a variational density $q$ in some tractable density family $\mathcal{Q}$ and to find $q^\star$ such that
$$
q^\star = \arginf_{q \in \mathcal{Q}} \diverg \couple[\PQ][\PP] \eqsp,
$$
where $\PP$ and $\PQ$ denote the probability measures on $(\Yset, \Ysigma)$ with corresponding associated density $p(\cdot|\data)$ and $q$. This optimisation problem still involves the (unknown) normalisation constant $p(\data)$ however it can easily be transformed into the following equivalent optimisation problem
$$
q^\star = \arginf_{q \in \mathcal{Q}} \int_\Yset \falpha \left(\dfrac{q(y)}{p(y, \data)}\right) {p(y, \data)} \nu(\rmd y) \eqsp,
$$
which does not involve the marginal likelihood $p(\data)$ anymore (see for example \cite{2016arXiv160100670B} and \cite{2016arXiv160202311L, NIPS2017_6866}).
The core of Variational Inference methods then consists in designing approximating families $\mathcal{Q}$ which allow efficient optimisation and which are able to capture complicated structure inside the posterior density. Typically, $q$ belongs to a parametric family $q = k_\theta$ where $\theta$ is in a certain parametric space $\Tset$, that is the minimisation occurs over the set of densities
$$
\set{y \mapsto k_\theta(y)}{\theta \in \Tset} \eqsp.
$$
In this paper, we offer to perform instead a minimization over
$$
\set{y \mapsto \int_\Tset \mu(\rmd \theta) k_\theta(y)}{\mu \in \mathsf{M}} \eqsp,
$$
where $\mathsf{M}$ is a convenient subset of $\meas{1}(\Tset)$, the set of probability measures on $\Tset$ (and in this case, we equip $\Tset$ with a $\sigma$-field denoted by $\Tsigma$). In doing so, we extend the minimizing set to a larger space since a parameter $\theta$ can be identified with its associated Dirac measure $\delta_\theta$. Similarly, a mixture model composed of $\lrcb{\theta_1, \ldots, \theta_J} \in \Tset^J$ will correspond to taking $\mu$ as a weighted sum of Dirac measures.

More formally, let us consider a measurable space $(\Tset,\Tsigma)$. Let $p$ be a measurable positive function on $(\Yset,\Ysigma)$ and $K:(\theta,A) \mapsto \int_A k(\theta,y)\nu(\rmd y)$ be a Markov transition
kernel on $\Tset \times \Ysigma$ with kernel density $k$ defined on
$\Tset \times \Yset$. Moreover, for all $\mu\in\meas{1}(\Tset)$, for all
$y \in \Yset$, we denote $\mu k(y) = \int_\Tset \mu(\rmd \theta) k(\theta, y)$ and we define
\begin{align}\label{eq:def-psi}
\Psif(\mu) = \int_\Yset \falpha \left(\dfrac{\mu k(y)}{p(y)}\right) {p(y)} \nu(\rmd y) \eqsp.
\end{align}
Note that $p$, $k$ and $\nu$ appear as well in $\Psif(\mu)$ i.e $\Psif(\mu) = \Psif(\mu;p,q,\nu)$, but we drop them for notational ease and when no ambiguity occurs. Notice also that we replaced $k_\theta(y)$ by $k(\theta, y)$ to comply with usual kernel notation. We consider in what follows the general optimisation problem
\begin{align}
\label{eq:objective}
\arginf_{\mu \in \mathsf{M}} \Psif(\mu)\eqsp,
\end{align}
and in practice, we will choose $p(y) = p(y, \data)$.

At this stage, a first remark is that the convexity of $\Psif$ is straightforward from the convexity of $\falpha$. Therefore, a simple yet powerful consequence of enlarging the variational family is that the optimisation problem now involves the {\em convex} mapping
\begin{equation*}
\mu \mapsto \Psif(\mu)= \int_\Yset \falpha \left(\frac{\mu k(y)}{p(y)}\right) {p(y)} \nu(\rmd y) \eqsp,
\end{equation*}
whereas the initial optimisation problem was associated to the mapping $\theta \mapsto \int_\Yset \falpha \left(\frac{k_\theta (y)}{p(y)}\right) {p(y)} \nu(\rmd y)$, which is not necessarily convex.


We now move on to \Cref{section:exact:aei}, where we describe the \aei descent and state our main theoretical results.

\section{The \aei descent}
\label{section:aei}





\label{section:exact:aei}
\subsection{An iterative algorithm for optimising $\Psif$}

Throughout the paper we will assume the following conditions on $k$, $p$ and $\nu$.

\begin{hyp}{A}
  \item\label{hyp:positive}
  The density kernel $k$ on $\Tset\times\Yset$, the function $p$ on $\Yset$ and
  the $\sigma$-finite measure $\nu$ on $(\Yset,\Ysigma)$ satisfy, for all
  $(\theta,y) \in \Tset \times \Yset$, $k(\theta,y)>0$, $p(y)>0$ and
  $\int_\Yset p(y) \nu(\rmd y)<\infty$.
\end{hyp}

Under \ref{hyp:positive}, we immediately obtain a lower bound on $\Psif$.
\begin{lemma}\label{lemma:bound}
Suppose that \ref{hyp:positive} holds. Then, for all $\mu \in
\meas{1}(\Tset)$, we have
$$
\Psif(\mu) \geq \tilde{\falpha}\left(\int_\Yset p(y) \nu(\rmd y)\right)> -\infty \eqsp,
$$
where $\tilde{\falpha}$ is defined on $(0, \infty)$ by $\tilde{\falpha}(u) = u \falpha(1/u)$.
\end{lemma}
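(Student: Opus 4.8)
The plan is to read the bound off Jensen's inequality, using the convexity of $\falpha$ together with the fact that both the kernel $K$ and the measure $\mu$ are probability-normalised. First I would set $Z = \int_\Yset p(y) \nu(\rmd y)$, which lies in $(0,\infty)$: it is finite by \ref{hyp:positive}, and strictly positive because $p>0$ everywhere and $\nu$ is non-trivial. This allows me to introduce the probability measure $\bar P$ on $(\Yset,\Ysigma)$ given by $\bar P(\rmd y) = Z^{-1} p(y) \nu(\rmd y)$ and to rewrite
$$
\Psif(\mu) = Z \int_\Yset \falpha\lr{\frac{\mu k(y)}{p(y)}} \bar P(\rmd y) \eqsp.
$$

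The substantive step is to check that the argument $g(y) = \mu k(y)/p(y)$ is $\bar P$-integrable, which is exactly what makes Jensen applicable. The key computation is
$$
\int_\Yset g(y) \bar P(\rmd y) = \frac{1}{Z} \int_\Yset \mu k(y) \nu(\rmd y) = \frac{1}{Z} \int_\Tset \mu(\rmd \theta) \int_\Yset k(\theta,y) \nu(\rmd y) = \frac{1}{Z} \eqsp,
$$
where the exchange of integrals is legitimate by Tonelli's theorem since every integrand is non-negative under \ref{hyp:positive}, the inner integral equals $1$ because $K$ is a Markov transition kernel, and the remaining mass is $\mu(\Tset)=1$ as $\mu \in \meas{1}(\Tset)$. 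In particular $g$ is integrable, so Jensen's inequality for the convex $\falpha$ and the probability measure $\bar P$ yields
$$
\int_\Yset \falpha(g(y)) \bar P(\rmd y) \geq \falpha\lr{\int_\Yset g(y) \bar P(\rmd y)} = \falpha\lr{\frac{1}{Z}} \eqsp.
$$
Multiplying through by $Z$ gives $\Psif(\mu) \geq Z \falpha(1/Z) = \tilde{\falpha}(Z)$, which is the claimed bound, and the finiteness $\tilde{\falpha}(Z) > -\infty$ follows since $\falpha$ is a real-valued convex function on $(0,\infty)$ and $Z \in (0,\infty)$, so $Z\falpha(1/Z)$ is a finite real number.

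The one point requiring care, and what I regard as the only genuine obstacle, is that the left-hand integral above must be well-defined (possibly $+\infty$) even in the case $\Psif(\mu) = +\infty$, so that Jensen's inequality is being invoked legitimately. This is automatic: a convex $\falpha$ is minorised by an affine function, hence $\falpha(g)$ has integrable negative part once $g$ is integrable, and the inequality holds with the right-hand side finite and the left-hand side in $(-\infty, +\infty]$. Everything else is bookkeeping, so I expect the integrability-and-normalisation computation (the Tonelli exchange followed by the two normalisations to get $1/Z$) to be the heart of the argument, with the appeal to Jensen being immediate once it is in place.
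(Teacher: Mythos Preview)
Your argument is correct and uses the same core idea as the paper---a one-step application of Jensen's inequality---but with a different choice of probability measure. The paper rewrites $\Psif(\mu) = \int_\Yset \tilde{\falpha}\lr{p(y)/\mu k(y)} \mu k(y)\,\nu(\rmd y)$ and applies Jensen to the perspective function $\tilde{\falpha}$ against the probability measure $\mu k(y)\,\nu(\rmd y)$, obtaining $\tilde{\falpha}\lr{\int_\Yset p\,\rmd\nu}$ directly. You instead normalise $p$ to the probability measure $\bar P = Z^{-1} p\,\nu$ and apply Jensen to $\falpha$ itself, arriving at $Z\falpha(1/Z) = \tilde{\falpha}(Z)$. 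Your route is marginally more elementary in that it does not invoke the (standard) fact that the perspective $\tilde{\falpha}$ of a convex function is convex; the paper's route is marginally more streamlined since $\mu k\,\nu$ is already a probability measure and no auxiliary constant $Z$ needs to be introduced. Both are equally valid and essentially the same length.
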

\begin{proof} Since $\tilde{\falpha}(u) = u \falpha(1/u)$, we have
$$
\Psif(\mu) = \int_\Yset \tilde{\falpha}\left(\frac{p(y)}{\mu k(y)}\right) \mu k(y) \nu(\rmd y) \eqsp.
$$
Recalling that $\falpha$ and hence $\tilde{\falpha}$, is convex on $\mathbb{R}_{> 0}$, Jensen's inequality applied to $\tilde{\falpha}$ yields $\Psif(\mu) \geq \tilde{\falpha}\lr{\int_\Yset p(y) \nu(\rmd y)}>-\infty$.
\end{proof}

\begin{rem}
\label{rem:assumption1}
Assumption \ref{hyp:positive} can be extended by discarding the assumption
that $p(y)$ is positive for all $y \in \Yset$. As it complicates the expression
of the constant appearing in the bound without increasing dramatically the
degree of generality of the results, we chose to maintain this assumption for
the sake of simplicity.
\end{rem}

Thus, if there exists a sequence of probability measures $\set{\mu_n}{n \in \nstar}$ on $(\Tset,\Tsigma)$ such that $\Psif(\mu_1)<\infty$ and $\Psif(\mu_n)$ is non-increasing with $n$, \Cref{lemma:bound} guarantees that this sequence converges to a limit in $\rset$. We now focus on constructing such a sequence $\set{\mu_n}{n \in \nstar}$.

For this purpose, let $\mu \in \meas{1}(\Tset)$. We introduce the one-step transition of the \aei descent which can be described as an {\em expectation} step and an {\em iteration} step: 

\begin{algorithm}
\caption{\em Exact \aei descent one-step transition}
\label{algo:aei}
\noindent
\begin{enumerate}
\item \underline{Expectation step} : \setlength{\parindent}{1cm} $\bmuf(\theta)= \mathlarger{\int_\Yset} k(\theta,y)  \falpha' \left(\dfrac{\mu k(y)}{p(y)}\right) \nu(\rmd y)$
\item \underline{Iteration step} : \setlength{\parindent}{1cm} $\iteration (\mu)(\rmd \theta)= \dfrac{\mu(\rmd \theta) \cdot \GammaAlpha(\bmuf(\theta)+ \cte)}{\mu(\GammaAlpha(\bmuf + \cte))}$
\end{enumerate} \
\end{algorithm}
Given a certain $\cte \in \rset$, a certain function $\GammaAlpha$ which takes its values in $\rset_{>0}$ and an initial measure $\mu_1 \in\meas{1}(\Tset)$ such that $\Psif(\mu_1) < \infty$, the iterative sequence of probability measures $(\mu_n)_{n \in \nstar}$ is then defined by setting
\begin{equation}
\label{eq:def:mu}
\mu_{n+1}= \iteration (\mu_n)\;,\qquad n\in\nstar \eqsp.
\end{equation}

A first remark is that under~\ref{hyp:positive} and for all $\alpha \in \rset \setminus \lrcb{1}$, $\bmuf$ is well-defined. As for the case $\alpha = 1$, we will assume in the rest of the paper that
$\bmuf[\mu][1](\theta)$ is finite for all $\mu \in \meas{1}(\Tset)$ and $\theta \in \Tset$.
The iteration $\mu \mapsto \iteration (\mu)$ is thus well-defined if moreover we have
\begin{align}\label{eq:admiss}
\mu(\GammaAlpha(\bmuf + \cte)) < \infty \eqsp.
\end{align}

A second remark is that we recover the Infinite-Dimensional Entropic Mirror Descent algorithm applied to the Kullback-Leibler (and more generally to the $\alpha$-divergence) objective function by choosing $\GammaAlpha$ of the form
$$
\GammaAlpha(v) = e^{-\eta v} \eqsp.
$$ We refer to \cite[Appendix A]{pmlr-v97-hsieh19b} for some theoretical background on the Infinite-Dimensional Entropic Mirror Descent. In this light, $\bmuf$ can be understood as the gradient of $\Psif$. Algorithm \ref{algo:aei} then consists in applying a transform function $\GammaAlpha$ to the gradient $\bmuf$ and projecting back onto the space of probability measures.

In the rest of the section, we investigate some core properties of the aforementioned sequence of probability measures $(\mu_n)_{n\in\nstar}$. We start by establishing conditions on $(\GammaAlpha, \cte)$ such that the \aei descent diminishes $\Psif(\mu_n)$ at each iteration for all $\mu_1 \in \meas{1}(\Tset)$ satisfying $\Psif(\mu_1) < \infty$.

\subsection{Monotonicity}

To establish that the \aei descent diminishes $\Psif(\mu_n)$ at each iteration, we first derive a general lower-bound for the difference $\Psif(\mu)- \Psif(\zeta)$. Here, $(\zeta,\mu)$ is a couple of probability measures where $\zeta$ is dominated by $\mu$ which we denote by $\zeta \preceq \mu$. This first result involves the following useful quantity
\begin{align}\label{eq:Aalpha}
  A_\alpha \eqdef \int_\Yset  \nu(\rmd y)  \int_\Tset \mu(\rmd \theta) k(\theta,y)  \falpha' \left( \frac{g(\theta)\mu k(y) }{p(y)} \right) \left[ 1 - g(\theta) \right]\eqsp,
\end{align}
where $g$ is the density of $\zeta$ w.r.t $\mu$, i.e. $\zeta(\rmd
\theta)=\mu(\rmd \theta) g(\theta)$.

\begin{lemma}\label{lem:fondam}
Assume \ref{hyp:positive}. Then, for all $\mu,\zeta\in\meas{1}(\Tset)$ such that $\zeta \preceq \mu$ and $\Psif(\mu) < \infty$, we have
\begin{equation} \label{eq:bound:fondam}
A_\alpha \leq  \Psif(\mu) - \Psif(\zeta) \eqsp.
\end{equation}
Moreover, equality holds in \eqref{eq:bound:fondam} if and only if $\zeta=\mu$.
\end{lemma}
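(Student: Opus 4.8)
The plan is to reduce the global inequality to a pointwise (in $y$) convexity estimate and then integrate against $\nu$. The key device is to read everything through the probability kernel $\theta \mapsto \mu(\rmd\theta)\,k(\theta,y)/\mu k(y)$ on $\Tset$, which is well-defined for $\nu$-a.e.\ $y$ since $\mu k(y)>0$ under \ref{hyp:positive}. Writing $r_y = \mu k(y)/p(y)$, the crucial observation is that $\zeta k(y) = \int_\Tset \mu(\rmd\theta)\, g(\theta)\, k(\theta,y)$, so that $\zeta k(y)/p(y) = r_y \int_\Tset \frac{\mu(\rmd\theta)\, k(\theta,y)}{\mu k(y)}\, g(\theta)$ appears precisely as an average of the quantities $g(\theta)\, r_y$ against this kernel.

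First I would feed in two consequences of the convexity of $\falpha$. The tangent-line inequality $\falpha(r_y) - \falpha(g(\theta) r_y) \geq \falpha'(g(\theta) r_y)\, r_y\,(1 - g(\theta))$ converts the integrand of $A_\alpha$ into a difference of $\falpha$-values: after dividing by $r_y$ (so the factor $k(\theta,y)/r_y = p(y)\, k(\theta,y)/\mu k(y)$ appears) and integrating in $\theta$ against $\mu$, the inner $\theta$-integral of $A_\alpha$ is bounded, for fixed $y$, by $p(y)\falpha(r_y) - \frac{p(y)}{\mu k(y)}\int_\Tset \mu(\rmd\theta)\,k(\theta,y)\,\falpha(g(\theta) r_y)$. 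Then Jensen's inequality applied to $\falpha$ against the same kernel gives $\frac{1}{\mu k(y)}\int_\Tset \mu(\rmd\theta)\,k(\theta,y)\,\falpha(g(\theta) r_y) \geq \falpha(\zeta k(y)/p(y))$, so this inner integral is in turn bounded by $p(y)\,[\falpha(\mu k(y)/p(y)) - \falpha(\zeta k(y)/p(y))]$. Integrating the resulting pointwise bound over $y$ against $\nu$ produces exactly $A_\alpha \leq \Psif(\mu) - \Psif(\zeta)$.

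Here I would pause on integrability, the first technical point to get right, since the integrand defining $A_\alpha$ changes sign. \Cref{lemma:bound} guarantees that both $\Psif(\mu)$ and $\Psif(\zeta)$ are bounded below, and $\Psif(\mu) < \infty$ is assumed, so the right-hand side $\Psif(\mu) - \Psif(\zeta)$ is a well-defined element of $[-\infty, \infty)$ and the pointwise bound may be integrated while preserving the inequality.

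The main obstacle is the equality characterization. Both inputs above are equalities only when $\falpha$ is affine along the relevant segments, so I would exploit that $\falpha$ is \emph{strictly} convex on $(0,\infty)$: indeed $\falpha''(u) = u^{\alpha-2} > 0$ for $\alpha \notin \{0,1\}$, and $\falpha'' = u^{-2}$, $u^{-1}$ in the two limiting cases. Since the overall gap is a sum of two nonnegative gaps whose integral vanishes, equality forces the tangent-line gap to vanish, and strict convexity then forces $g(\theta) r_y = r_y$, i.e.\ $g(\theta) = 1$, for $\mu(\rmd\theta)\,k(\theta,y)\,\nu(\rmd y)$-almost every $(\theta,y)$. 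Because $k(\theta,y) > 0$ everywhere and $\nu$ is nontrivial (as $0 < \int_\Yset p(y)\,\nu(\rmd y) < \infty$), a Fubini argument upgrades this to $g = 1$ $\mu$-a.e., that is $\zeta = \mu$; the converse is immediate, as $g \equiv 1$ makes both sides vanish. I expect the measure-theoretic passage from the almost-everywhere statement on $\Tset \times \Yset$ to the $\mu$-almost-everywhere statement on $\Tset$ to be the step demanding the most care.
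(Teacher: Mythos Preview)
Your proposal is correct and follows essentially the same route as the paper: both arguments introduce (implicitly in your case, explicitly in the paper as an intermediate quantity $h_\alpha(\zeta,\mu)$) the average $\frac{p(y)}{\mu k(y)}\int_\Tset \mu(\rmd\theta)\,k(\theta,y)\,\falpha(g(\theta)\,r_y)$, bound it below by $\Psif(\zeta)$ via Jensen and above by $\Psif(\mu)-A_\alpha$ via the tangent-line inequality at $g(\theta)\,r_y$, and then invoke strict convexity for the equality characterization. Your treatment of equality via the tangent-line gap (yielding $g=1$ directly) is a slight variant of the paper's, which appeals instead to the Jensen equality to get $g$ constant $\mu$-a.e., but the content is the same.
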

\begin{proof} To prove \eqref{eq:bound:fondam}, we introduce the intermediate function
$$
h_\alpha(\zeta, \mu) = \int_\Yset \nu(\rmd y) p(y)  \int_\Tset \frac{\mu(\rmd \theta) k(\theta,y)}{\mu k(y)} \falpha \left( \frac{g(\theta)\mu k(y)}{p(y)} \right) \eqsp.
$$
Then, the convexity of $\falpha$ combined with Jensen's inequality implies that
\begin{align}\label{eq:bound1}
h_\alpha(\zeta, \mu) \geq \int_\Yset   \nu(\rmd y) p(y) \falpha \left( \frac{\int_\Tset \mu(\rmd \theta) k(\theta,y) g(\theta)}{p(y)} \right) = \Psif(\zeta) \eqsp.
\end{align}
Next, set $u_{\theta, y} = \frac{g(\theta)\mu k(y)}{p(y)}$ and $v_y = \frac{\mu k(y)}{p(y)}$. Since the function $\falpha$ is convex, we have that for all $\theta\in\Tset$, for all $y \in \Yset$, $\falpha(v_y) \geq \falpha(u_{\theta, y}) + \falpha'(u_{\theta,y})(v_y -u_{\theta,y})$, that is
\begin{align}\label{eq:bound2}
\falpha\left(\frac{\mu k(y)}{p(y)} \right) \geq \falpha\left(\frac{ g(\theta)\mu k(y)}{p(y)} \right) + \falpha'\left(\frac{g(\theta) \mu k(y)}{p(y)} \right)\frac{\mu k(y)}{p(y)}[1 - g(\theta)] \eqsp.
\end{align}
Now integrating over $\Tset$ with respect to $\frac{\mu(\rmd \theta) k(\theta, y)}{\mu k(y)}$ and then integrating over $\Yset$ with respect to $p(y) \nu(\rmd y)$ in \eqref{eq:bound2} yields
\begin{align}\label{eq:bound3}
\Psif(\mu) \geq h_\alpha(\zeta, \mu) + A_\alpha \eqsp.
\end{align}
Combining this result with \eqref{eq:bound1} gives \eqref{eq:bound:fondam}. The case of equality is obtained using the strict convexity of $\falpha$ in \eqref{eq:bound1} and \eqref{eq:bound2} which shows that $g$ is constant
$\mu$-a.e. so that $\zeta=\mu$.
\end{proof}


We now plan on setting $\zeta= \iteration(\mu)$ in \Cref{lem:fondam} and obtain that one iteration of the \aei descent yields $\Psif\circ\iteration(\mu) \leq \Psif(\mu)$. Based on the lower-bound obtained in \Cref{lem:fondam}, a sufficient condition is to prove that taking $g \propto \GammaAlpha(\bmuf + \cte)$ in \eqref{eq:Aalpha} implies $A_\alpha \geq 0$. For this purpose, let us denote by $\Domain$ an interval of $\Rset$ such that for all $\theta \in \Tset$, for all $\mu \in \meas{1}(\Tset)$, $\bmuf(\theta) + \cte$ and $\mu(\bmuf) + \cte \in \Domain$ and let us make an assumption on $(\GammaAlpha, \cte)$.
\begin{hyp}{A}
\item\label{hyp:gamma} The function $\GammaAlpha: \Domain \to\rset_{> 0}$ is decreasing, continuously differentiable and satisfies the inequality
    \begin{align*}
        \left[(\alpha -1)(v-\cte) + 1\right] (\log {\GammaAlpha})'(v) + 1 \geq 0, \quad v \in \Domain \eqsp.
    \end{align*}
\end{hyp}

We now state our first main theorem.

\begin{thm}\label{thm:monotone}
  Assume \ref{hyp:positive} and \ref{hyp:gamma}. Let $\mu\in\meas{1}(\Tset)$ be such that \eqref{eq:admiss} holds and $\Psif(\mu)<\infty$. Then, the two following assertions hold.
\begin{enumerate}[label=(\roman*)]
\item \label{item:mono1} We have  $\Psif \circ \iteration (\mu) \leq \Psif(\mu)$.
\item \label{item:mono2} We have $\Psif \circ \iteration (\mu) =\Psif(\mu)$ if and only if $\mu=\iteration (\mu)$.
\end{enumerate}
\end{thm}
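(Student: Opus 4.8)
The plan is to apply \Cref{lem:fondam} to the pair $(\iteration(\mu),\mu)$. Writing $W\eqdef\mu(\GammaAlpha(\bmuf+\cte))$, which is finite and positive by \eqref{eq:admiss} and because $\GammaAlpha>0$, the measure $\zeta=\iteration(\mu)$ has $\mu$-density $g(\theta)=\GammaAlpha(\bmuf(\theta)+\cte)/W$, so $g\geq0$, $\mu(g)=1$ and $\zeta\preceq\mu$. \Cref{lem:fondam} then gives $A_\alpha\leq\Psif(\mu)-\Psif(\iteration(\mu))$, with equality iff $\iteration(\mu)=\mu$. Thus it suffices to prove $A_\alpha\geq0$: assertion \ref{item:mono1} is then immediate, and for \ref{item:mono2} the implication $\iteration(\mu)=\mu\Rightarrow\Psif\circ\iteration(\mu)=\Psif(\mu)$ is trivial, while conversely $\Psif\circ\iteration(\mu)=\Psif(\mu)$ forces $0=\Psif(\mu)-\Psif(\iteration(\mu))\geq A_\alpha\geq0$, so equality holds in \eqref{eq:bound:fondam} and \Cref{lem:fondam} yields $\iteration(\mu)=\mu$.

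The first step is to reduce $A_\alpha$ to a tractable form. Using $\falpha'(u)=(u^{\alpha-1}-1)/(\alpha-1)$ for $\alpha\neq1$ (and $\falpha'(u)=\log u$ when $\alpha=1$, treated identically), I would factor $g(\theta)^{\alpha-1}$ out of the inner integral in \eqref{eq:Aalpha}, then perform the $y$-integration using that $K$ is Markov, i.e.\ $\int_\Yset k(\theta,y)\nu(\rmd y)=1$, together with the definition of $\bmuf$. This turns the inner integral into $g(\theta)^{\alpha-1}\bigl(\bmuf(\theta)+\tfrac{1}{\alpha-1}\bigr)-\tfrac{1}{\alpha-1}$; integrating against $\mu(\rmd\theta)\,[1-g(\theta)]$ and discarding the constant term, which vanishes because $\mu(1-g)=0$, leaves
\begin{equation*}
A_\alpha=\int_\Tset \mu(\rmd\theta)\,[1-g(\theta)]\,g(\theta)^{\alpha-1}\Bigl(\bmuf(\theta)+\tfrac{1}{\alpha-1}\Bigr)\eqsp.
\end{equation*}

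The second step, where \ref{hyp:gamma} is used, is to show $A_\alpha\geq0$. The integrand depends on $\theta$ only through $\bmuf(\theta)$, so introducing the real functions $g(t)=\GammaAlpha(t+\cte)/W$ and $\psi(t)=g(t)^{\alpha-1}\bigl(t+\tfrac{1}{\alpha-1}\bigr)$ and using $\mu(g\circ\bmuf)=1$, the above reads $A_\alpha=-\Cov_\mu(g\circ\bmuf,\psi\circ\bmuf)$. Differentiating gives $\psi'(t)=g(t)^{\alpha-1}\bigl\{[(\alpha-1)t+1](\log\GammaAlpha)'(t+\cte)+1\bigr\}$, and the brace is exactly the left-hand side of the inequality in \ref{hyp:gamma} evaluated at $v=t+\cte$; hence $\psi$ is nondecreasing. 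Since $\GammaAlpha$ is decreasing, $g$ is nonincreasing, so $g\circ\bmuf$ and $\psi\circ\bmuf$ are oppositely monotone functions of $\bmuf$ and Chebyshev's association inequality gives $\Cov_\mu(g\circ\bmuf,\psi\circ\bmuf)\leq0$, i.e.\ $A_\alpha\geq0$.

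The conceptual crux is this reduction together with the recognition that \ref{hyp:gamma} is precisely the condition making $\psi$ monotone; the sign of $A_\alpha$ then follows from the covariance/association argument, and both assertions follow as above. The remaining points are technical rather than substantial: justifying Fubini when reorganising the double integral in \eqref{eq:Aalpha}, checking the integrability of $g\circ\bmuf$ and $\psi\circ\bmuf$ under $\mu$ needed for the covariance identity and the association inequality (guaranteed by $\Psif(\mu)<\infty$ and \eqref{eq:admiss}), and carrying out the limiting case $\alpha=1$ by the same scheme with $\psi(t)=\log\GammaAlpha(t+\cte)+t$.
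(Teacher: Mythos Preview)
Your proof is correct and follows essentially the same route as the paper: both apply \Cref{lem:fondam} with $\zeta=\iteration(\mu)$, reduce $A_\alpha$ to the one-dimensional integral $\int_\Tset\mu(\rmd\theta)\,[1-g(\theta)]\,g(\theta)^{\alpha-1}\bigl(\bmuf(\theta)+\tfrac{1}{\alpha-1}\bigr)$, rewrite it as a covariance, and use \ref{hyp:gamma} to check that the two factors are monotone in the same sense so the covariance has the right sign. The only cosmetic differences are that the paper parametrises by $V=\bmuf+\cte$ rather than $t=\bmuf$, and leaves the covariance-of-monotone-functions fact implicit where you name it as Chebyshev's association inequality.
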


\begin{proof} To prove \ref{item:mono1}, we set $g \propto \GammaAlpha(\bmuf + \cte)$ in \eqref{eq:Aalpha} and we will show that $A_\alpha \geq 0$. Then, the proof is concluded by setting $\zeta=\iteration (\mu)$ in \Cref{lem:fondam} as
\begin{equation}\label{eq:monotone:one}
\Psif \circ \iteration (\mu) \leq \Psif(\mu)- A_\alpha \leq \Psif(\mu) \eqsp.
\end{equation}
We study the cases $\alpha = 1$ and $\alpha \in \rset \setminus \lrcb{1}$ separately.

\begin{enumerate}[label=(\alph*),wide=0pt, labelindent=\parindent]
  \item Case $\alpha = 1$. In this case $\falpha[1]'(u) = \log u$ and we have
\begin{align*}
  A_1 &= \int_\Yset \nu(\rmd y) \int_\Tset \mu(\rmd \theta) k(\theta,y)  \log \left( \frac{g(\theta) \mu k(y)}{p(y)} \right) \left[ 1 - g(\theta) \right]  \\
  &= \int_\Yset  \nu(\rmd y) \int_\Tset \mu(\rmd \theta) k(\theta,y) \left[ \log g(\theta) + \falpha[1]' \left( \frac{ \mu k(y)}{p(y)} \right) \right] \left[ 1 - g(\theta) \right] \\
  &= \int_\Tset \mu(\rmd \theta) \left[\log g(\theta) + \int_\Yset k(\theta,y) \falpha[1]' \left( \frac{ \mu k(y)}{p(y)} \right)  \nu(\rmd y) \right] \left[ 1 - g(\theta) \right] \\
  &= \int_\Tset \mu(\rmd \theta) \left[ \log g(\theta) + \bmuf[\mu][1](\theta) + \cte \right] \left[ 1 - g(\theta) \right] \eqsp.
\end{align*}
where we used that $\mu[\kappa (1-g)] = 0$ in the last equality. Setting $\tgamma (v) = \GammaAlpha(v) / \mu(\GammaAlpha(\bmuf[\mu][1] + \cte))$ for all $v \in \Domain[1]$, we have $g=\tgamma \circ (\bmuf[\mu][1] + \cte)$. Let us thus consider the probability space $(\Tset,\Tsigma,\mu)$ and let $V$ be the random variable $V(\theta)=\bmuf[\mu][1](\theta) + \cte$. Then, $\PE[1-\tgamma(V)] = 0$ and we can write
$$
A_1 = \PE[ (\log \tgamma (V) + V) (1-\tgamma(V))] = \Cov(\log \tgamma (V) + V, 1- \tgamma(V)) \eqsp.
$$ 

Under \ref{hyp:gamma} with $\alpha =1$, $v \mapsto \log \tgamma (v) + v$ and $v \mapsto 1-\tgamma(v)$ are increasing on $\Domain[1]$ which implies $A_1\geq 0$.


  \item Case $\alpha \in \rset \setminus \lrcb{1}$. In this case $\falpha'(u) = \frac{1}{\alpha-1} [ u^{\alpha-1}- 1]$ and we have
  \begin{align*}
A_\alpha &= \int_\Yset \nu(\rmd y) \int_\Tset \mu(\rmd \theta) k(\theta,y) \frac{1}{\alpha-1} \left[ \left( \frac{ g(\theta)\mu k(y)}{p(y)} \right)^{\alpha-1} - 1 \right] \left[ 1 - g(\theta) \right]  \\
&= \int_\Yset \nu(\rmd y) \int_\Tset \mu(\rmd \theta) k(\theta,y) \frac{1}{\alpha-1} \left( \frac{\mu k(y)}{p(y)} \right)^{\alpha-1} g(\theta)^{\alpha -1} \left[ 1 - g(\theta) \right]  \\
&= \int_\Tset \mu(\rmd \theta) \left[\bmuf(\theta) + \frac{1}{\alpha-1} \right] g(\theta)^{\alpha -1} \left[ 1 - g(\theta) \right] \eqsp.
\end{align*}
Again, setting $\tgamma (v) = \GammaAlpha(v) / \mu(\GammaAlpha(\bmuf + \cte))$ for all $v \in \Domain$, we have $g=\tgamma \circ (\bmuf[\mu] + \cte)$. Let us consider the probability space $(\Tset,\Tsigma,\mu)$ and let $V$ be the random variable $V(\theta)=\bmuf(\theta)+ \cte$. Then, we have $\PE[1-\tgamma(V)] = 0$ and setting $\cte' = \cte - \frac{1}{\alpha-1}$ we can write
$$
A_\alpha = \PE[ (V - \cte')\tgamma^{\alpha-1}(V) (1-\tgamma(V))] = \Cov((V - \cte')\tgamma^{\alpha-1}(V), 1- \tgamma(V)) \eqsp.
$$
Under \ref{hyp:gamma} with $\alpha \in \rset \setminus \lrcb{1}$, $v \mapsto (v - \cte')\tgamma^{\alpha-1}(v)$ and $v \mapsto 1- \tgamma(v)$ are increasing on $\Domain$ which implies $A_\alpha\geq 0$.

\end{enumerate}

Let us now show \ref{item:mono2}. The {\em if} part is obvious. As for the {\em only if} part,  $\Psif \circ \iteration (\mu) =\Psif(\mu)$  combined with \eqref{eq:monotone:one} yields
$$
\Psif \circ \iteration (\mu) = \Psif(\mu) - A_\alpha \eqsp,
$$
which is the case of equality in \Cref{lem:fondam}. Therefore, $\iteration (\mu) = \mu$.
\end{proof}

\textbf{Possible choices for $(\GammaAlpha, \cte)$.} At this stage, we have established conditions on $(\GammaAlpha, \cte)$ such that $\Psif \circ \iteration(\mu) \leq \Psif(\mu)$ and identified the case of equality. Notice in particular that the inequality in \ref{hyp:gamma} is free from the parameter $\cte$ when $\alpha = 1$, which implies that the function $\GammaAlpha(v) = e^{-\eta v}$ satisfies \ref{hyp:gamma} for all $\eta \in (0,1]$. As a consequence, the case of the Entropic Mirror Descent with the forward Kullback-Leibler divergence as objective function is included in this framework.

One can also readily check that $\GammaAlpha(v) = [ \left(\alpha -1 \right)v + 1]^{\eta/(1-\alpha)}$ satisfies \ref{hyp:gamma} for all $\alpha \in \Rset \setminus \lrcb{1}$, for all $\cte$ such that $(\alpha-1)\cte \geq 0$ and for all $\eta \in (0,1]$. We will refer to this particular choice of $\GammaAlpha$ as the {\em Power Descent} thereafter. These two examples are summarized in Table \ref{table:admiss} below. \newline

\begin{table}[hbtp]
\caption{Examples of allowed $(\GammaAlpha, \cte)$ in the \aei descent according to \Cref{thm:monotone}.}
\label{table:admiss}
\centering
\setlength\extrarowheight{4.5pt}
\begin{tabular}{|l|l|l|}
\hline
Divergence considered & \multicolumn{2}{|l|}{Possible choices for $(\GammaAlpha, \cte)$}  \\
\hline
{\em Forward KL} ($\alpha = 1$) & $\GammaAlpha(v) = e^{-\eta v}$, $\eta \in (0,1]$  & any $\cte$ \\
\hline
{\em $\alpha$-divergence} with $\alpha \in \rset \setminus \lrcb{1}$ & $\GammaAlpha(v) =  [ \left(\alpha -1 \right)v + 1]^{\frac{\eta}{1-\alpha}}$, $\eta \in(0,1]$ & $(\alpha-1)\cte \geq 0$ \\
\hline
\end{tabular}
\end{table}

\textbf{Improving upon \Cref{lem:fondam}.} In the following Lemma, we derive an explicit lower-bound for $\Psif(\mu) - \Psif\circ \iteration(\mu)$ in terms of the variance of $\bmuf$. Let us thus consider the probability space $(\Tset,\Tsigma,\mu)$ and denote by $\Var_\mu$ the associated variance operator. 

\begin{lemma}\label{lem:mono:refined} Assume \ref{hyp:positive} and \ref{hyp:gamma}. Let $\mu\in\meas{1}(\Tset)$ be such that \eqref{eq:admiss} holds and $\Psif(\mu)<\infty$. Then,
\begin{align}\label{eq:mono:refined}
 \frac{\ctemono}{2} \Var_\mu \left( \bmuf\right) \leq \Psif(\mu) - \Psif\circ \iteration(\mu) \eqsp,
\end{align}
where
$$
\ctemono \eqdef \inf_{v \in \Domain} \lrcb{ \left[(\alpha -1) (v-\cte) + 1\right] (\log \GammaAlpha)'(v) + 1 } \times \inf_{v \in \Domain}  - \GammaAlpha'(v) \eqsp. $$
\end{lemma}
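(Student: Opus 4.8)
The plan is to build directly on the monotonicity analysis of \Cref{thm:monotone}. There, with $g \propto \GammaAlpha(\bmuf + \cte)$, the quantity $A_\alpha$ of \eqref{eq:Aalpha} was shown to be nonnegative and to satisfy $\Psif \circ \iteration(\mu) \leq \Psif(\mu) - A_\alpha$, i.e.\ $A_\alpha \leq \Psif(\mu) - \Psif\circ\iteration(\mu)$ by \eqref{eq:monotone:one}. Hence it suffices to establish the sharper lower bound $A_\alpha \geq \tfrac{\ctemono}{2}\Var_\mu(\bmuf)$. I would work on the probability space $(\Tset,\Tsigma,\mu)$ with $V = \bmuf + \cte$, so that $\Var_\mu(V) = \Var_\mu(\bmuf)$, and reuse the covariance identities already derived in that proof: writing $\tgamma(v) = \GammaAlpha(v)/\mu(\GammaAlpha(\bmuf + \cte))$, one has
$$A_1 = \Cov\lr{\log\tgamma(V) + V,\; 1 - \tgamma(V)}, \qquad A_\alpha = \Cov\lr{(V-\cte')\tgamma(V)^{\alpha-1},\; 1-\tgamma(V)}\eqsp,$$
for $\alpha = 1$ and $\alpha \in \rset\setminus\lrcb{1}$ respectively. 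In both cases this is of the form $A_\alpha = \Cov(\phi(V),\psi(V))$ with $\psi(v) = 1 - \tgamma(v)$ and $\phi$ the accompanying increasing factor, both increasing on $\Domain$ as established when proving $A_\alpha\geq 0$.

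First I would differentiate the two factors. Since $\GammaAlpha$ is decreasing, $\psi'(v) = -\tgamma'(v) > 0$, and a direct computation for $\phi$ (using the identity $(\alpha-1)(v-\cte') = (\alpha-1)(v-\cte)+1$ to treat both cases at once) gives
$$\phi'(v) = \tgamma(v)^{\alpha-1}\lrcb{\lrb{(\alpha-1)(v-\cte)+1}(\log\GammaAlpha)'(v) + 1}\eqsp,$$
where the bracketed factor is exactly the nonnegative quantity figuring in \ref{hyp:gamma} and in the first infimum defining $\ctemono$ (for $\alpha = 1$ the prefactor $\tgamma^{\alpha-1}$ is $1$). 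The plan is then to quantify the covariance of two increasing functions of $V$ in terms of $\Var_\mu(V)$: invoke the symmetrisation identity $\Cov(\phi(V),\psi(V)) = \tfrac12\PE\lrb{(\phi(V)-\phi(V'))(\psi(V)-\psi(V'))}$ for an independent copy $V'$ of $V$, apply the mean value theorem to each increment, and bound $\phi'$ and $\psi'$ below by their infima over $\Domain$. Combining this with the variance representation $\Var_\mu(V) = \tfrac12\PE[(V-V')^2]$ then reproduces the two factors defining $\ctemono$, namely the \ref{hyp:gamma}-bracket and $\inf_{\Domain}(-\GammaAlpha')$.

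The hard part will be the precise constant bookkeeping in the case $\alpha\neq 1$. There $\phi'$ carries the extra density factor $\tgamma(v)^{\alpha-1}$ and $\psi'$ carries the normalising constant $\mu(\GammaAlpha(\bmuf+\cte))$, and these must be controlled so that the final bound collapses onto $\inf_{\Domain}(-\GammaAlpha')$ and the \ref{hyp:gamma}-bracket alone rather than onto the normalised derivatives, with the factor $\tfrac12$ correctly attributed to the variance symmetrisation. This is where I expect the main technical care to lie, and it is the reason the two infima in $\ctemono$ are stated in terms of $\GammaAlpha$ itself. Finally I would observe that $\ctemono \geq 0$, since its first factor is nonnegative by \ref{hyp:gamma} and $-\GammaAlpha' > 0$ because $\GammaAlpha$ is decreasing, so that the bound is consistent with $A_\alpha \geq 0$; and that it is informative only when $\inf_{\Domain}(-\GammaAlpha') > 0$, typically when $\Domain$ is bounded (e.g.\ when $\bmuf$ is bounded), the case $\ctemono = 0$ merely recovering the qualitative monotonicity of \Cref{thm:monotone}.
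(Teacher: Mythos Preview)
Your approach is exactly the paper's: reduce to showing $A_\alpha \geq \tfrac{\ctemono}{2}\Var_\mu(\bmuf)$, take the covariance identities for $A_\alpha$ from the proof of \Cref{thm:monotone}, symmetrise via an independent copy of $V=\bmuf+\cte$, rewrite as a product of difference quotients times $(V-V')^2$, and bound each quotient from below by the infimum of the corresponding derivative. On the constant bookkeeping you flag as the hard part, the paper proceeds identically but simply writes the symmetrised expression directly in terms of $\GammaAlpha$ rather than $\tgamma$ and passes to $\ctemono$ without explicitly tracking the normalisation $\mu(\GammaAlpha(\bmuf+\cte))$ or the $\GammaAlpha^{\alpha-1}$ prefactor, so that point is handled no more carefully there than in your sketch.
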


The proof of \Cref{lem:mono:refined} builds on the proof of \Cref{thm:monotone} and can be found in \Cref{subsec:lem:mono:refined}. \newline

\Cref{lem:mono:refined} can be interpreted in the following way: provided that $\ctemono > 0$, \eqref{eq:mono:refined} states that the case of equality is reached if and only if the variance of the gradient $\bmuf$ equals zero. Such a result, which holds for any transform function $\GammaAlpha$ satisfying \ref{hyp:gamma}, quantifies the improvement after one step of the \aei descent.

Interestingly, monotonicity properties akin to \Cref{lem:mono:refined} have previously been derived under stronger smoothness assumptions in the context of Projected Gradient Descent steps. For example, in the particular case where the objective function $f$ is assumed to be $\beta$-smooth on $\rset$, for all $u \in \Rset$ it holds (see for example \cite[Equation 3.5]{MAL-050}) that
$$
\frac{1}{\beta} \| \nabla f(u) \|^2 \leq f(u) - f\left(u- \frac{1}{\beta} \nabla f(u)\right) \eqsp.
$$
This result is then used to obtain improved convergence rates for the Projected Gradient Descent algorithm. Consequently, we are next interested in proving a rate of convergence for the Exact \aei descent by leveraging \Cref{lem:mono:refined}. 



\subsection{Convergence} \label{subsec:convergence} Let $\mu_1 \in \meas{1}(\Tset)$. We want to study the limiting behavior of the Exact \aei descent for the iterative sequence of probability measure $(\mu_n)_{n\in \nstar}$ defined by \eqref{eq:def:mu}. To do so, we first introduce the two following useful quantities
\begin{align*}
\cteinf^{-1} &\eqdef \inf_{v \in \Domain} (- \log \GammaAlpha)' (v) \quad \mbox{ and } \quad \ctesup^{-1} \eqdef \inf_{v \in \Domain} \GammaAlpha(v) \eqsp. 
\end{align*}
We define $\meas{1, \mu_1}(\Tset)$ as the set of probability measures dominated by $\mu_1$. Next, we strengthen the assumptions on $\GammaAlpha$ as follows.
\begin{hyp}{A}
\item\label{hyp:gamma2} The function $\GammaAlpha: \Domain \to\rset_{> 0}$ is $L$-smooth and the function $- \log \GammaAlpha$ is concave increasing.
\end{hyp}
We are now able to derive our second main result.

\begin{thm}
\label{thm:admiss}
Assume \ref{hyp:positive}, \ref{hyp:gamma} and \ref{hyp:gamma2}. Further assume that $\ctemono$, $\cteinf > 0$ and that $0 <\inf_{v \in \Domain} \GammaAlpha(v) \leq \sup_{v \in \Domain} \GammaAlpha(v) < \infty$. Moreover, let $\mu_1\in\meas{1}(\Tset)$ be such that
$\Psif(\mu_1)<\infty$. Then, the following assertions hold.

\begin{enumerate}[label=(\roman*)]
\item \label{item:admiss1} The sequence $(\mu_n)_{n\in\nstar}$ defined by \eqref{eq:def:mu} is well-defined and the sequence
$(\Psif (\mu_n))_{n\in\nstar}$ is non-increasing.

\item \label{item:admiss2} For all $N \in \nstar$, we have
\begin{align}\label{eq:rate}
   \Psif(\mu_N) - \Psif(\mu^\star) \leq \frac{\cteinf}{N} \left[ KL\couple[\mu^\star][\mu_1] + L\frac{ \ctesup}{\ctemono} \Delta_1 \right]\eqsp,
\end{align}
where $\mu^\star$ is such that $\Psif(\mu^\star) = \inf_{\zeta \in \meas{1, \mu_1}(\Tset)} \Psif(\zeta)$ and where we have defined $\Delta_1 = \Psif(\mu_1) - \Psif(\mu^\star)$ and $KL\couple[\mu^\star][\mu_1] = \int_\Tset \log \left(\frac{\rmd\mu^\star}{\rmd\mu_1} \right) \rmd\mu^\star$.
\end{enumerate}
\end{thm}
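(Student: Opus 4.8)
The plan is to dispatch \ref{item:admiss1} directly from \Cref{thm:monotone} and to prove \ref{item:admiss2} by a telescoping, mirror-descent-type argument. For \ref{item:admiss1}, I would first observe that the assumption $\sup_{v \in \Domain} \GammaAlpha(v) < \infty$ forces $\mu_n(\GammaAlpha(\bmuf[\mu_n] + \cte)) \leq \sup_{v \in \Domain}\GammaAlpha(v) < \infty$, so that \eqref{eq:admiss} holds at every step and $(\mu_n)_{n \in \nstar}$ is well-defined through \eqref{eq:def:mu}. Because $\GammaAlpha > 0$ and the normalising constant stays in $[\inf_{v} \GammaAlpha(v), \sup_{v} \GammaAlpha(v)] \subset (0, \infty)$, each $\mu_{n+1}$ is mutually absolutely continuous with $\mu_n$, hence with $\mu_1$; in particular $\mu_n \in \meas{1, \mu_1}(\Tset)$ and $\mu^\star \preceq \mu_n$ for all $n$, which will be needed below. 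The non-increasing character of $(\Psif(\mu_n))_{n \in \nstar}$ is then \Cref{thm:monotone}\ref{item:mono1} applied successively along the sequence.

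The heart of \ref{item:admiss2} is a one-step inequality
\[
\Psif(\mu_n) - \Psif(\mu^\star) \leq \cteinf \lrb{ KL\couple[\mu^\star][\mu_n] - KL\couple[\mu^\star][\mu_{n+1}] } + \cteinf \frac{L \ctesup}{2} \Var_{\mu_n}(\bmuf[\mu_n]) \eqsp.
\]
To derive it, write $b = \bmuf[\mu_n]$, $\bar b = \mu_n(b)$ and $\phi = -\log \GammaAlpha$. First, convexity of $\falpha$ yields the first-order bound $\Psif(\mu_n) - \Psif(\mu^\star) \leq \bar b - \mu^\star(b)$; combined with $\Psif(\mu_n) \geq \Psif(\mu^\star)$ (valid since $\mu_n \in \meas{1,\mu_1}(\Tset)$), this gives the crucial sign information $\bar b - \mu^\star(b) \geq 0$. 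Second, the explicit update $\mu_{n+1} \propto \mu_n \GammaAlpha(b + \cte)$ gives the exact identity
\[
KL\couple[\mu^\star][\mu_n] - KL\couple[\mu^\star][\mu_{n+1}] = -\mu^\star(\phi(b + \cte)) - \log \mu_n(\GammaAlpha(b + \cte)) \eqsp.
\]

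I would then lower-bound the two terms of this identity separately. For the first, concavity of $\phi$ (from \ref{hyp:gamma2}) gives the tangent inequality $\mu^\star(\phi(b + \cte)) \leq \phi(\bar b + \cte) + \phi'(\bar b + \cte)(\mu^\star(b) - \bar b)$, whence $\phi(\bar b + \cte) - \mu^\star(\phi(b+\cte)) \geq \phi'(\bar b + \cte)(\bar b - \mu^\star(b)) \geq \cteinf^{-1}(\bar b - \mu^\star(b))$, the last step using $\bar b - \mu^\star(b) \geq 0$ together with $\phi' \geq \cteinf^{-1}$. For the normalising term, $L$-smoothness of $\GammaAlpha$ and $\mu_n(b - \bar b) = 0$ give $\mu_n(\GammaAlpha(b + \cte)) \leq \GammaAlpha(\bar b + \cte) + \frac{L}{2}\Var_{\mu_n}(b)$; combined with $\log(1 + x) \leq x$ and $\GammaAlpha(\bar b + \cte) \geq \ctesup^{-1}$ this yields $-\log\mu_n(\GammaAlpha(b + \cte)) \geq \phi(\bar b + \cte) - \frac{L\ctesup}{2}\Var_{\mu_n}(b)$. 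Adding the two lower bounds, using $\bar b - \mu^\star(b) \geq \Psif(\mu_n) - \Psif(\mu^\star)$, and rearranging by the factor $\cteinf$ produces the one-step inequality above.

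Summing over $n = 1, \dots, N$, the KL increments telescope to $\cteinf\, KL\couple[\mu^\star][\mu_1]$ after discarding the non-negative term $KL\couple[\mu^\star][\mu_{N+1}]$, while \Cref{lem:mono:refined} controls the variances via $\sum_{n=1}^N \Var_{\mu_n}(\bmuf[\mu_n]) \leq \frac{2}{\ctemono}(\Psif(\mu_1) - \Psif(\mu^\star)) = \frac{2}{\ctemono}\Delta_1$, so their contribution is at most $\cteinf L \frac{\ctesup}{\ctemono}\Delta_1$. Finally, since $(\Psif(\mu_n))$ is non-increasing by \ref{item:admiss1}, one has $N(\Psif(\mu_N) - \Psif(\mu^\star)) \leq \sum_{n=1}^N (\Psif(\mu_n) - \Psif(\mu^\star))$, and dividing by $N$ gives exactly \eqref{eq:rate}. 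I expect the one-step inequality to be the main obstacle: the subtle point is that the sign constraint $\bar b - \mu^\star(b) \geq 0$, forced by the optimality of $\mu^\star$ over $\meas{1,\mu_1}(\Tset)$, is precisely what converts the concavity tangent bound into a clean $\cteinf^{-1}$ factor, for otherwise the inequality would point the wrong way. A secondary technical care is ensuring finiteness of the KL terms along the iterates, which holds because the two-sided bound on $\GammaAlpha$ makes $\log(\rmd \mu_{n+1} / \rmd \mu_n)$ uniformly bounded.
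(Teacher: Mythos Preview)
Your proposal is correct and follows essentially the same approach as the paper: part \ref{item:admiss1} via \Cref{thm:monotone} and the boundedness of $\GammaAlpha$, and part \ref{item:admiss2} via the same one-step inequality (your $KL\couple[\mu^\star][\mu_n] - KL\couple[\mu^\star][\mu_{n+1}]$ is exactly the paper's $\int_\Tset \log(\rmd\mu_{n+1}/\rmd\mu_n)\,\rmd\mu^\star$), derived from convexity of $\falpha$, concavity of $-\log\GammaAlpha$, $L$-smoothness of $\GammaAlpha$, and then telescoped together with \Cref{lem:mono:refined}. The only differences are cosmetic: you organise the argument around the KL identity and bound its two pieces separately, whereas the paper first bounds $\Delta_n$ by $\cteinf[\log\GammaAlpha(\bmuf[\mu_n]) - \log\GammaAlpha(\mu_n(\bmuf[\mu_n]))]$ integrated against $\mu^\star$ and then rewrites; your explicit remark on mutual absolute continuity of the iterates (ensuring $\mu^\star \preceq \mu_n$ and finiteness of the KL terms) is a welcome addition that the paper leaves implicit.
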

The proof of \Cref{thm:admiss}, which as hinted previously brings into play \Cref{lem:mono:refined}, is deferred to \Cref{subsec:admiss}. We now wish to comment on the constants appearing in \eqref{eq:rate} and in particular the two constants $KL\couple[\mu^\star][\mu_1]$ and $\Delta_1$ (since the remaining constants $\ctemono$, $\cteinf$, $\ctesup$ and $L$ all involve the function $\GammaAlpha$, which has not been chosen yet in \Cref{thm:admiss}).

To do so, we consider in \Cref{ex:SimplexFramework} the finite-dimensional case where $\mu_1$ is a weighted sum of dirac measures. As we shall explain in more details later on in \Cref{sec:sto}, this case is of particular relevance to us as our procedure can then be used to optimise the mixture weights of any given mixture model. \newpage

\begin{ex}[Simplex Framework] \label{ex:SimplexFramework} Let $J \in \nstar$, let $(\theta_1, \ldots, \theta_J) \in \Tset^J$ and let us consider $\mu_1 = J^{-1} \sum_{j=1}^{J} \delta_{\theta_j}$. Then, $\mu^\star$ is of the form $\sum_{j=1}^{J} \lambda_j^\star \delta_{\theta_j}$ where $(\lambda_1^\star, ..., \lambda_J^\star)$ belongs to the simplex of dimension $J$. Moreover, the two quantities $KL\couple[\mu^\star][\mu_1]$ and $\Delta_1$ can easily be bounded in terms of $J$. Indeed, using that $\log u \leq u - 1$ for all $u > 0$ and that $\sum_{j=1}^{J} \lambda_j^{\star 2} \leq 1$, we obtain that
\begin{align*}
KL\couple[\mu^\star][\mu_1] &= \sum_{j=1}^{J} \lambda_j^\star \log \lambda_j^\star + \log J \\
& \leq \log J \eqsp.
\end{align*}
As for $\Delta_1$, we have by convexity that
\begin{align*}
\Delta_1 & \leq [\mu_1 - \mu^\star](\bmuf[\mu_1]) \eqsp
\end{align*}
and, using Pinsker's inequality as well as the bound on $KL\couple[\mu^\star][\mu_1]$ we have established just above, we can deduce
\begin{align*}
\Delta_1 & \leq [\mu_1 - \mu^\star](\bmuf[\mu_1]- \PE_{\mu_1} \left[ \bmuf[\mu_1] \right]) \\
& \leq \sqrt{2} \sqrt{KL\couple[\mu^\star][\mu_1]}  \max_{1 \leq j, j' \leq J} |\bmuf[\mu_1](\theta_j) - \bmuf[\mu_1](\theta_{j'})| \\
& \leq \sqrt{2 \log J} \max_{1 \leq j, j' \leq J} |\bmuf[\mu_1](\theta_j) - \bmuf[\mu_1](\theta_{j'})| \eqsp.
\end{align*}
\end{ex}
In the next Theorem, we state several practical examples of couples $(\GammaAlpha, \cte)$ which satisfy the assumptions from \Cref{thm:admiss}.

\begin{thm} \label{thm:admiss:spec} Assume \ref{hyp:positive}. Define $\binfty \eqdef \sup_{\theta \in \Tset, \mu \in \meas{1}(\Tset)} |\bmuf(\theta)|$ and assume that $\binfty < \infty$. Let $(\GammaAlpha, \cte)$ belong to any of the following cases.

\begin{enumerate}[label=(\roman*)]
\item \label{item:admiss:kl} Forward Kullback-Leibler divergence ($\alpha = 1)$: $\GammaAlpha(v) = e^{-\eta v}$, $\eta \in (0,1)$ and $\cte$ is any real number (Entropic Mirror Descent);

\item \label{item:admiss:alpha} Reverse Kullback-Leibler ($\alpha = 0$) and $\alpha$-Divergence with $\alpha \in \rset \setminus \lrcb{0,1}$:
\begin{enumerate}[label=(\alph*)]
\item\label{item:admiss-alpha-a} $\GammaAlpha(v) = e^{-\eta v}$, $\eta \in (0,\frac{1}{|\alpha-1| \binfty + 1})$ and $\cte$ is any real number (Entropic Mirror Descent);
\item\label{item:admiss-alpha-b} $\GammaAlpha(v) = [ \left(\alpha -1 \right)v + 1]^{\frac{\eta}{1-\alpha}}$, $\eta \in (0,1]$, $\alpha >1$ and $\cte > 0$ (Power Descent);
\end{enumerate}
\end{enumerate}
Let $\mu_1\in\meas{1}(\Tset)$ be such that
$\Psif(\mu_1)<\infty$. Then, the sequence $(\mu_n)_{n\in\nstar}$ defined by \eqref{eq:def:mu} is well-defined and the sequence $(\Psif (\mu_n))_{n\in\nstar}$ is non-increasing with a convergence rate characterized by \eqref{eq:rate}.
\end{thm}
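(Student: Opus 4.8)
The plan is to verify, for each of the listed cases, that the proposed couple $(\GammaAlpha,\cte)$ — together with the standing assumption \ref{hyp:positive} — satisfies every hypothesis demanded by \Cref{thm:admiss}, namely \ref{hyp:gamma}, \ref{hyp:gamma2}, the strict positivity of $\ctemono$ and $\cteinf$, and the two-sided bound $0<\inf_{v\in\Domain}\GammaAlpha(v)\leq\sup_{v\in\Domain}\GammaAlpha(v)<\infty$; the rate \eqref{eq:rate} then follows verbatim. The one enabling observation that makes all of this tractable is that $\binfty<\infty$ lets me take $\Domain$ to be a compact interval: since $|\bmuf(\theta)|\leq\binfty$ for every $\theta$ and $\mu$, and hence $|\mu(\bmuf)|\leq\binfty$ as well, both $\bmuf+\cte$ and $\mu(\bmuf)+\cte$ live in $[\cte-\binfty,\cte+\binfty]$, so I let $\Domain$ be the smallest interval containing all these values. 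On a compact interval every continuous function of $v$ attains its bounds, so the infima and suprema defining $\ctemono$, $\cteinf$, $\ctesup$ and $L$ are automatically finite, and the task reduces to checking that the right ones are strictly positive.

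First I would dispatch the two exponential (Entropic Mirror Descent) cases \ref{item:admiss:kl} and \ref{item:admiss-alpha-a} at once, since for $\GammaAlpha(v)=e^{-\eta v}$ one has $(\log\GammaAlpha)'(v)=-\eta$, $-\log\GammaAlpha(v)=\eta v$ (linear, hence concave and increasing, giving \ref{hyp:gamma2}), and $-\GammaAlpha'(v)=\eta e^{-\eta v}>0$, so $\GammaAlpha$ is decreasing, $C^1$, and $L$-smooth with $L=\eta^2\sup_{v\in\Domain}e^{-\eta v}$. The inequality in \ref{hyp:gamma} becomes $1-\eta\,[(\alpha-1)(v-\cte)+1]\geq 0$; writing $v-\cte=\bmuf(\theta)$ (or $\mu(\bmuf)$) so that $|v-\cte|\leq\binfty$, the bracket is at most $1+|\alpha-1|\binfty$, and the condition holds precisely on the stated range of $\eta$ (for $\alpha=1$ the bracket equals $1$ and any $\eta\in(0,1)$ works). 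The strict inequality $\eta<(|\alpha-1|\binfty+1)^{-1}$ then forces the first factor of $\ctemono$ to be $\geq 1-\eta(1+|\alpha-1|\binfty)>0$, while the second factor $\inf_v(-\GammaAlpha'(v))$ and $\cteinf^{-1}=\eta$ are evidently positive, and $\GammaAlpha$ is squeezed between two positive constants on the compact $\Domain$.

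The Power Descent case \ref{item:admiss-alpha-b} is where the main care is needed. Here $\GammaAlpha(v)=[(\alpha-1)v+1]^{\eta/(1-\alpha)}$ is defined and positive only when $(\alpha-1)v+1>0$, so before differentiating anything I must check that this base stays bounded away from $0$ on $\Domain$. The key point is that for $\alpha>1$ one has $\falpha'(u)=\tfrac{1}{\alpha-1}(u^{\alpha-1}-1)\geq-\tfrac{1}{\alpha-1}$, and since $K$ is a Markov kernel $\int_\Yset k(\theta,y)\nu(\rmd y)=1$, integration gives $\bmuf(\theta)\geq-\tfrac{1}{\alpha-1}$ for every $\theta$ and $\mu$; hence for $v=\bmuf(\theta)+\cte$ (and likewise $v=\mu(\bmuf)+\cte$) we get $(\alpha-1)v+1=(\alpha-1)\bmuf(\theta)+1+(\alpha-1)\cte\geq(\alpha-1)\cte>0$, using $\cte>0$. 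With $w:=(\alpha-1)v+1\in[(\alpha-1)\cte,\,(\alpha-1)(\binfty+\cte)+1]$ thus bounded away from $0$ and $\infty$, the remaining verifications are direct: $(\log\GammaAlpha)'(v)=-\eta/w$ (so $\GammaAlpha$ decreasing), $(-\log\GammaAlpha)'(v)=\eta/w>0$ with $(-\log\GammaAlpha)''(v)=-\eta(\alpha-1)/w^2<0$ (increasing and concave, giving \ref{hyp:gamma2}), and $\GammaAlpha''$ bounded on the compact $w$-range (giving $L$-smoothness). For \ref{hyp:gamma} I rewrite the bracket as $1-\eta+\eta(\alpha-1)\cte/w\geq 1-\eta\geq 0$; evaluated at the largest $w$ this quantity is strictly positive, yielding positivity of the first factor of $\ctemono$, while $\cteinf^{-1}=\inf_v \eta/w=\eta/[(\alpha-1)(\binfty+\cte)+1]>0$ and the two-sided bound on $\GammaAlpha$ both follow from the bounds on $w$.

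I expect the only genuine obstacle to be this Power Descent step: one must first produce the a priori lower bound $\bmuf\geq-1/(\alpha-1)$ so that the base $(\alpha-1)v+1$ never approaches $0$, without which neither $\GammaAlpha$ nor its logarithmic derivative would be controlled. Everything else is bookkeeping of signs and of suprema over a compact interval, after which the conclusion of \Cref{thm:admiss} — well-posedness of $(\mu_n)_{n\in\nstar}$, monotonicity of $(\Psif(\mu_n))_{n\in\nstar}$, and the rate \eqref{eq:rate} — applies directly in each case.
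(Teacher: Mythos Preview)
Your proposal is correct and follows essentially the same route as the paper: for each of the three cases you verify the hypotheses of \Cref{thm:admiss} (assumptions \ref{hyp:gamma}, \ref{hyp:gamma2}, positivity of $\ctemono$ and $\cteinf$, and the two-sided bound on $\GammaAlpha$) over a compact $\Domain$, and then invoke \eqref{eq:rate}. Your treatment of the Power Descent case is in fact slightly more explicit than the paper's: where the paper simply declares the domain $\Domain=[\tfrac{1}{1-\alpha}+\cte,\binfty+\cte]$ in a side note and notes that $(\alpha-1)\cte>0$ makes $\GammaAlpha$ well-defined there, you derive the a priori bound $\bmuf(\theta)\geq -\tfrac{1}{\alpha-1}$ directly from $\falpha'(u)\geq -\tfrac{1}{\alpha-1}$ and the Markov property of $K$, which is exactly the justification underlying that domain choice.
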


The proof of \Cref{thm:admiss:spec} can be found in \Cref{subsec:thm:admiss:spec}. In terms of assumptions, we only require the gradients of the function $\Psif$ to be bounded in $l_\infty$-norm, which is a standard assumption, and the objective function to be finite at the starting measure $\mu_1$, i.e $\Psif(\mu_1) < \infty$, which again is a mild assumption that can even be discarded for all $\alpha \neq 0$ (see \Cref{rep:hyp:admiss:spec} of \Cref{sec:addRes}).

Let us now illustrate the benefits of our approach with an example where the different constants appearing in \eqref{eq:rate} are bounded explicitly and where we compare the convergence rate we obtain with typical Mirror Descent convergence results from the optimisation literature.

\begin{ex}[Simplex framework and forward Kullback-Leibler] \label{ex:SimplexfKL} Let $J \in \nstar$, let $(\theta_1, \ldots, \theta_J) \in \Tset^J$ and let us consider $\mu_1 = J^{-1} \sum_{j=1}^{J} \delta_{\theta_j}$. In addition, let $\alpha = 1$ and $\GammaAlpha(v) = e^{-\eta v}$ with $v \in \Domain = [- \binfty[1] + \cte, \binfty[1] + \cte]$ and $\cte \in \Rset$. Then, we have $\ctemono[1] = (1-\eta)\eta e^{-\eta \binfty - \eta \cte}$, $\cteinf[1] = \eta^{-1}$, $\ctesup[1] = e^{\eta \binfty + \eta \cte}$ and $L = \eta^2 e^{\eta \binfty - \eta \cte}$.

In the particular case of the Entropic Mirror Descent, the constant $\cte$ does not appear in the update formula  \eqref{eq:def:mu} due to the normalisation, so we can choose it however we want without impacting the convergence of the algorithm. Notice then that by choosing $\cte = -3 \binfty$ and based on \Cref{ex:SimplexFramework}, we obtain the following convergence rate for all $\eta \in (0,1)$
$$
\Psif(\mu_N) - \Psif(\mu^\star) \leq \frac{\log J}{\eta N} + \frac{\sqrt{2\log J} \binfty}{(1-\eta)N} \eqsp.
$$
\end{ex}
Thus, in the particular case of \Cref{ex:SimplexfKL}, the dominant term in \eqref{eq:rate} with respect to the dimension $J$ of the simplex is in $\log J$ so that we achieve an overall $O(\frac{\log J}{N})$ convergence rate. Furthermore, the range of possible values for $\eta$ is stated explicitly, since the result holds for all $\eta \in (0,1)$. \newline

This is an improvement compared to standard Mirror Descent results, which under similar assumptions only provide an $O(1/\sqrt{N})$ convergence rate and assume an $O(1/\sqrt{N})$ learning rate (see \cite{BECK2003167} or \cite[Theorem 4.2.]{MAL-050}). Indeed, Projected Gradient Descent and Entropic Mirror Descent typically achieve an $O(\sqrt{J/N})$ and $O(\sqrt{\log (J)/N})$ convergence rate respectively in the Simplex framework. This means that \Cref{thm:admiss:spec} improves with respect to both $N$ and $J$ compared to Projected Gradient Descent and that it improves with respect to $N$ for the Entropic Mirror Descent with a small cost in terms of the dimension $J$ of the simplex.

Moreover, while accelerated versions of the Mirror Descent (e.g Mirror Prox, see \cite{Nemirovski04prox-methodwith} or \cite[Theorem 4.4.]{MAL-050}) also yield an $O(1/N)$ convergence rate, they require the objective function to be sufficiently smooth, an additional assumption that we have bypassed when deriving our results. \newline

The case of the Power Descent for $\alpha < 1$ is not included in \Cref{thm:admiss:spec}. This case is trickier and must be handled separately in order to obtain the convergence of the algorithm. For this purpose, we first introduce the following additive set of assumptions
\begin{hyp}{A}
  \item\label{hyp:compact}
\begin{enumerate}[label=(\roman*)]
\item \label{item:theta:one}$\Tset$ is a compact metric space and $\Tsigma$ is
  the associated Borel $\sigma$-field;
\item \label{item:theta:two} for all $y \in \Yset$, $\theta \mapsto k(\theta,y)$ is continuous;
\item \label{item:theta:four} we have $\int_\Yset {\sup_{\theta \in \Tset}k(\theta,y)} \times \sup_{\theta' \in \Tset} \lr{\frac{k(\theta',y)}{p(y)}}^{\alpha-1} \nu(\rmd y)<\infty$.
\end{enumerate}
If $\alpha = 0$, assume in addition that $\int_\Yset {\sup_{\theta \in \Tset} \left| \log\lr{\frac{k(\theta,y)}{p(y)}} \right|} p(y) \nu(\rmd y)<\infty$.
\end{hyp}
%

Here, condition \ref{hyp:compact}-\ref{item:theta:four} implies that $\bmuf(\theta)$ and $\Psif(\mu)$ are uniformly bounded with respect to $\mu$ and $\theta$, which is rather weak condition under \ref{hyp:compact}-\ref{item:theta:one} since we consider a supremum taken over a compact set (and $\Tset$ will always be chosen as such in practice). We then have the following theorem, which states that the possible weak limits of $(\mu_n)_{n\in\nstar}$ correspond to the global infimum of $\Psif$.

\begin{thm}
  \label{thm:repulsive} Assume \ref{hyp:positive} and
  \ref{hyp:compact}. Let $\alpha < 1$, $\cte \leq 0$ and set $\GammaAlpha(v) = [ \left(\alpha -1 \right)v + 1]^{\eta/(1-\alpha)}$ for all $v \in \Domain$. Then, for all
  $\zeta\in\meas{1}(\Tset)$, any $\eta > 0$ satisfies \eqref{eq:admiss} and $\Psif(\zeta) < \infty$.

  Let $\eta \in (0,1]$. Further assume that there exists $\mu_1,\muf \in\meas{1}(\Tset)$ such that the (well-defined) sequence $(\mu_n)_{n\in\nstar}$ defined by \eqref{eq:def:mu} weakly converges to $\muf$ as
  $n\to\infty$. Then the following assertions hold
\begin{enumerate}[label=(\roman*)]
\item \label{item:rep1bis} $(\Psif(\mu_n))_{n\in\nstar}$ is non-increasing,
\item\label{item:rep1} $\muf$ is a fixed point of $\iteration$,
\item\label{item:rep2} $\Psif(\muf)=\inf_{\zeta \in \meas{1,\mu_1}(\Tset)} \Psif(\zeta)$.
\end{enumerate}
\end{thm}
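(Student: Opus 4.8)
The plan is to dispatch the preliminary integrability claim first, then prove \ref{item:rep1bis}, \ref{item:rep1}, \ref{item:rep2} in turn, with the two genuinely technical points being the weak-continuity of $\iteration$ (for \ref{item:rep1}) and a repulsion argument identifying the limit (for \ref{item:rep2}).

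\medskip

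\emph{Preliminaries and \ref{item:rep1bis}.} Since $K$ is Markov, $\int_\Yset k(\theta,y)\nu(\rmd y)=1$, so using $\falpha'(u)=(\alpha-1)^{-1}(u^{\alpha-1}-1)$ I would first record the identity $(\alpha-1)(\bmuf(\theta)+\cte)+1=\int_\Yset k(\theta,y)(\mu k(y)/p(y))^{\alpha-1}\nu(\rmd y)+(\alpha-1)\cte$. Here $\alpha<1$ and $\cte\leq 0$ give $(\alpha-1)\cte\geq 0$, while $\mu k(y)\geq\inf_{\theta'}k(\theta',y)$ and $\alpha-1<0$ give $(\mu k(y)/p(y))^{\alpha-1}\leq\sup_{\theta'}(k(\theta',y)/p(y))^{\alpha-1}$; hence the integral is bounded above uniformly in $(\theta,\mu)$ by the finite quantity in \ref{hyp:compact}-\ref{item:theta:four}. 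This shows $\bmuf$ is uniformly bounded and that $\GammaAlpha(\bmuf+\cte)$ takes values in a fixed compact subinterval of $(0,\infty)$, which yields \eqref{eq:admiss}; the same domination (together with the Markov normalisation, $\int_\Yset p\,\nu<\infty$, and the extra log-integrability of \ref{hyp:compact} when $\alpha=0$) gives $\Psif(\zeta)<\infty$. Assertion \ref{item:rep1bis} is then immediate: this $(\GammaAlpha,\cte)$ satisfies \ref{hyp:gamma} (as noted after \Cref{thm:monotone}, precisely because $(\alpha-1)\cte\geq 0$), so \Cref{thm:monotone}\ref{item:mono1} applies at each step.

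\medskip

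\emph{\ref{item:rep1}: the crux.} I would prove that $\iteration$ is continuous for weak convergence, i.e. $\mu_n\Rightarrow\muf$ implies $\iteration(\mu_n)\Rightarrow\iteration(\muf)$; since $\iteration(\mu_n)=\mu_{n+1}\Rightarrow\muf$ by \eqref{eq:def:mu}, uniqueness of weak limits on the compact $\Tset$ then forces $\iteration(\muf)=\muf$. The heart is the uniform estimate $\sup_\theta|\bmuf[\mu_n](\theta)-\bmuf[\muf](\theta)|\to 0$. The crucial observation is that the constant term of $\falpha'$ cancels in the difference, so that this supremum is bounded by $\int_\Yset\sup_\theta k(\theta,y)\,|(\mu_n k(y)/p(y))^{\alpha-1}-(\muf k(y)/p(y))^{\alpha-1}|\,\nu(\rmd y)/|\alpha-1|$, whose integrand is dominated by $2|\alpha-1|^{-1}\sup_\theta k(\theta,y)\sup_{\theta'}(k(\theta',y)/p(y))^{\alpha-1}$, integrable by \ref{hyp:compact}-\ref{item:theta:four}; as $\mu_n k(y)\to\muf k(y)$ pointwise (weak convergence against the continuous $k(\cdot,y)$), dominated convergence gives the uniform limit. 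The same splitting shows $\theta\mapsto\bmuf[\muf](\theta)$ is continuous. Since $\GammaAlpha$ is uniformly continuous on the compact range of $\bmuf+\cte$, $\GammaAlpha(\bmuf[\mu_n]+\cte)\to\GammaAlpha(\bmuf[\muf]+\cte)$ uniformly, and combining with $\mu_n\Rightarrow\muf$ yields $\mu_n(\phi\,\GammaAlpha(\bmuf[\mu_n]+\cte))\to\muf(\phi\,\GammaAlpha(\bmuf[\muf]+\cte))$ for every continuous $\phi$ (taking $\phi\equiv 1$ for the denominators, which stay in a fixed interval of $(0,\infty)$). This gives $\iteration(\mu_n)\Rightarrow\iteration(\muf)$.

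\medskip

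\emph{\ref{item:rep2}.} From $\iteration(\muf)=\muf$ and the strict monotonicity (hence injectivity) of $\GammaAlpha$, the fixed-point relation forces $\bmuf[\muf]$ to equal a constant $c$ $\muf$-a.e. The extra fact needed is $\bmuf[\muf]\geq c$ $\mu_1$-a.e., which I would obtain by a repulsion argument: if $B_\delta=\{\bmuf[\muf]<c-\delta\}$ had $\mu_1(B_\delta)>0$ for some $\delta>0$, then by the uniform convergence of $\bmuf[\mu_n]$ and by $\mu_n(\GammaAlpha(\bmuf[\mu_n]+\cte))\to\GammaAlpha(c+\cte)$, for $n$ large the one-step density $\rmd\mu_{n+1}/\rmd\mu_n=\GammaAlpha(\bmuf[\mu_n]+\cte)/\mu_n(\GammaAlpha(\bmuf[\mu_n]+\cte))$ would exceed a fixed factor $>1$ on $B_\delta$ (because $\GammaAlpha$ is decreasing); since every $\mu_n$ is dominated by $\mu_1$ with positive density, $\mu_n(B_\delta)>0$ for all $n$, so $\mu_n(B_\delta)$ would grow geometrically, contradicting $\mu_n(B_\delta)\leq 1$. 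Finally the convexity of $\falpha$ gives, via $\falpha(b)-\falpha(a)\geq\falpha'(a)(b-a)$ and Fubini, the subgradient inequality $\Psif(\zeta)\geq\Psif(\muf)+[\zeta-\muf](\bmuf[\muf])$ for every $\zeta$; for $\zeta\in\meas{1,\mu_1}(\Tset)$ the two facts give $\zeta(\bmuf[\muf])\geq c=\muf(\bmuf[\muf])$, whence $\Psif(\muf)\leq\inf_{\zeta\in\meas{1,\mu_1}(\Tset)}\Psif(\zeta)$. The reverse inequality follows from $\mu_n\in\meas{1,\mu_1}(\Tset)$ together with $\Psif(\mu_n)\to\Psif(\muf)$ (continuity of $\Psif$ for weak convergence, again by dominated convergence using \ref{hyp:compact}-\ref{item:theta:four}).

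\medskip

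I expect the real difficulty to lie in the two estimates that both hinge on \ref{hyp:compact}-\ref{item:theta:four} to tame the $u^{\alpha-1}$ singularity as $u\to 0$: the uniform convergence $\bmuf[\mu_n]\to\bmuf[\muf]$ underpinning weak-continuity of $\iteration$, and the geometric-growth step of the repulsion argument. The cancellation of the constant in $\falpha'$ (avoiding any need for $\int_\Yset\sup_\theta k(\theta,y)\,\nu(\rmd y)<\infty$) is the point that makes the domination go through.
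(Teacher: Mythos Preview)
Your proposal is correct and rests on the same core ingredients as the paper: the uniform bound on $(\alpha-1)(\bmuf+\cte)+1$ via \ref{hyp:compact}-\ref{item:theta:four}, monotonicity from \Cref{thm:monotone}, and a geometric-growth repulsion to identify the optimum. There are, however, two genuine structural differences worth flagging.

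For \ref{item:rep1}, the paper does \emph{not} prove weak continuity of $\iteration$ and then invoke uniqueness of weak limits. Instead it establishes $[\iteration(\mu_n)k](y)\to[\iteration(\muf)k](y)$ pointwise (via its \Cref{lem:repulsiveOne}), deduces by dominated convergence that $\Psif(\iteration(\mu_n))\to\Psif(\iteration(\muf))$ and $\Psif(\mu_n)\to\Psif(\muf)$, hence $\Psif(\muf)=\Psif(\iteration(\muf))$, and only then appeals to the \emph{equality case} \Cref{thm:monotone}\ref{item:mono2} to conclude $\muf=\iteration(\muf)$. Your route is more direct and avoids that equality case entirely; it buys this simplicity by using the stronger \emph{uniform} convergence $\sup_\theta|\bmuf[\mu_n]-\bmuf[\muf]|\to 0$ (the paper only records pointwise convergence), which you obtain cleanly from the cancellation of the constant in $\falpha'$ and the domination of \ref{hyp:compact}-\ref{item:theta:four}.

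For \ref{item:rep2}, the paper argues by contradiction from a putative $\mubar\in\meas{1,\mu_1}(\Tset)$ with $\Psif(\mubar)<\Psif(\muf)$: convexity (its \Cref{lem:fixed:repulsive:prelim}) produces a set $\{g_{\muf}>\delta\,\muf(g_{\muf})\}$ of positive $\mubar$-measure, the fixed-point property upgrades this to a set where $\GammaAlpha(\bmuf[\muf]+\cte)>\delta\,\muf(\GammaAlpha(\bmuf[\muf]+\cte))$, and the geometric growth follows. You instead first establish the first-order condition $\bmuf[\muf]\geq c$ $\mu_1$-a.e.\ by running the repulsion on the sublevel set $\{\bmuf[\muf]<c-\delta\}$, and only afterwards apply the convex subgradient inequality. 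Since $\GammaAlpha$ is decreasing and $g_{\muf}=(\alpha-1)(\bmuf[\muf]+\cte)+1$ with $\alpha<1$, these are the same sets viewed dually; your ordering yields the optimality characterisation (a KKT-type statement) as an intermediate byproduct, whereas the paper's ordering keeps convexity and repulsion more tightly coupled.
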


The proof of \Cref{thm:repulsive} is deferred to \Cref{subsec:proof-repulsive}. Intuitively, we expect $\muf$ to be a fixed point of $\iteration$ based on \Cref{thm:monotone}. The core difficulty of the proof is then to prove Assertion \ref{item:rep2} and to do so, we proceed by contradiction: we assume there exists $\mubar \in \meas{1, \mu_1}(\Tset)$ such that $\Psif(\muf) > \Psif(\mubar)$ and we contradict the fact that $(\mu_n)_{n\in\nstar}$ converges to a fixed point. \newline

The impact of \Cref{thm:admiss:spec} and \Cref{thm:repulsive} is twofold: not only our results improve on the $O(1/\sqrt{N})$ convergence rates previously established for Mirror Descent algorithms but they also allow us to go beyond the typical Entropic Mirror Descent framework by introducing the Power Descent.

Another interesting aspect is that the range of allowed values for the learning rate $\eta$ is given explicitly in some cases (namely, the Power Descent and the Entropic Mirror Descent with the forward Kullback-Leibler). This is in contrast with usual Mirror Descent convergence results where the optimal learning rate depends on $\binfty$, the Lipschitz constant of $\Psif$, which might be unknown in practice.

The results we obtained thus far are summarized in Table \ref{table:admiss2} below.

\begin{table}[hbtp]
\caption{Examples of allowed $(\GammaAlpha, \cte)$ in the \aei descent according to \Cref{thm:admiss:spec} and \Cref{thm:repulsive}.}
\label{table:admiss2}
\centering
\setlength\extrarowheight{4.5pt}
\begin{tabular}{|l|l|l|}
\hline
Divergence considered & \multicolumn{2}{|l|}{Possible choice of $(\GammaAlpha, \cte)$}  \\
\hline
{\em Forward KL} ($\alpha = 1$) & $\GammaAlpha(v) = e^{-\eta v}$, $\eta \in (0,1)$  & any $\cte$ \\
\hline
\multirow{2}{4.5cm}{{\em $\alpha$-divergence} with $\alpha \in \rset \setminus \lrcb{1}$ \\ } &
$\GammaAlpha(v) = e^{-\eta v}$, $\eta \in (0, \frac{1}{|\alpha-1| \binfty + 1})$ & any $\cte$ \\
\cline{2-3} & $\alpha>1$, $\GammaAlpha(v) =  [ \left(\alpha -1 \right)v + 1]^{\frac{\eta}{1-\alpha}}$, $\eta \in(0,1]$ & $\cte > 0$ \\
\cline{2-3} & $\alpha<1$, $\GammaAlpha(v) =  [ \left(\alpha -1 \right)v + 1]^{\frac{\eta}{1-\alpha}}$, $\eta \in(0,1]$ & $\cte \leq 0$ \\
\hline
\end{tabular}
\end{table}

As Algorithm \ref{algo:aei} typically involves an intractable integral in the Expectation step, we now turn to a Stochastic version of this algorithm.

\section{Stochastic \aei descent} \label{sec:sto} We start by introducing the notation for the Stochastic version of Algorithm \ref{algo:aei}. Let $M \in \nstar$ and let $\mu \in  \meas{1}(\Tset)$. The Stochastic \aei descent algorithm one-step transition is defined as follows.

 \begin{algorithm}
\caption{\em Stochastic \aei descent one-step transition}
\label{algo:sto}
\noindent
\begin{enumerate}
\item \underline{Sampling step} : \setlength{\parindent}{1cm} Draw independently $Y_1, \ldots, Y_M \sim \mu k$
\item \underline{Expectation step} : \setlength{\parindent}{1cm} $\mathlarger \bmufk(\theta) = \dfrac{1}{M} \sum \limits_{m=1}^{M} \dfrac{ k(\theta,Y_m)} { \mu k(Y_m)} \falpha'\lr{\dfrac{ \mu k(Y_m)}{p(Y_m)}}$
\item \underline{Iteration step} : \setlength{\parindent}{1cm} $\iterationK (\mu) (\rmd \theta) = \dfrac{\mu(\rmd \theta) \cdot \GammaAlpha (\bmufk(\theta) + \cte)}{\mu( \GammaAlpha(\bmufk + \cte))}$
\end{enumerate} \
\end{algorithm}

Let us now denote by $(\Omega, \mathcal F,\mathbb P)$ the underlying probability space and by $\PE$ the associated expectation operator. Given $\hmu_1 \in \meas{1}(\Tset)$, the Stochastic version of the Exact iterative scheme defined by \eqref{eq:def:mu} is then given by
\begin{equation}
\label{eq:def:mu:sto}
\hmu_{n+1}= \iterationK (\hmu_n)\;,\qquad n\in\nstar \eqsp,
\end{equation}
where we have defined for all $\theta \in \Tset$ and for all $n \geq 1$,
\begin{equation} \label{eq:def:bmufk}
\bmufk[\hmu_n](\theta) = \dfrac{1}{M} \sum \limits_{m=1}^{M} \dfrac{ k(\theta,Y_{m, n+1})} { \hmu_n k(Y_{m, n+1})} \falpha'\lr{\dfrac{ \hmu_n k(Y_{m, n+1})}{p(Y_{m, n+1})}}
\end{equation}
with $Y_{1,n+1}, \ldots,  Y_{M,n+1} \overset{\mathrm{i.i.d}}{\sim} \hmu_n k$ conditionally on $\mathcal F_n$ and where $\mathcal F_1 = \emptyset$ and $\mathcal{F}_n = \sigma( Y_{1,2}, \ldots , Y_{M,2}, \ldots,$ $Y_{1,n}, \ldots, Y_{M,n})$ for $k\geq 2$. Notice that we use $\hmu_n k$ as a sampler instead of $k(\theta, \cdot)$ in \eqref{eq:def:bmufk}. As our algorithm optimises over $\mu$, sampling with respect to $\hmu_n k$ is not only cheaper computationally, but it also gives preference to the interesting regions of the parameter space. \newline

A first idea to study this algorithm is to adapt \Cref{thm:admiss} to the Stochastic case. This can be done for the Entropic Mirror Descent and a bound on $\PE[\Psif(\hmu_n) - \Psif(\mu^\star)]$ of the form $O(1/N) + O(1/\sqrt{M})$ can be derived for a wide range of constant learning rates $\eta$ (see \Cref{sec:adaptSto} for the formal statement of the result and its proof). Maintaining an $O(1/N)$ bound however requires $M \geq N^2$, which yields an overall computational cost of order $N^3$. Another option consists in adapting \cite{doi:10.1137/070704277} to our framework. This option involves a learning rate policy $(\eta_n)_{n\in\nset}$ and notably yields an $O(1/\sqrt{N})$ bound for a constant policy $\eta_n = \eta_0/\sqrt{N}$, as written in \Cref{thm:emd:sto} below. \newpage

\begin{thm}\label{thm:emd:sto} Assume \ref{hyp:positive}. Let $M \in \nstar$ and let $\hmu_1 \in \meas{1}(\Tset)$. Given a sequence of positive learning rates $(\eta_n)_{n \in \nset}$, we let $(\hmu_n)_{n\in \nstar}$ be defined by $\frac{\rmd \hmu_{n+1}}{\rmd \hmu_n} \propto e^{-\eta_n\bmufk[\hmu_n]}$ and we set $w_n = \frac{\eta_n}{\sum_{n=1}^{N} \eta_n}$, $n \geq 1$. Further assume that
\begin{align}\label{bound_unif}
\cteLipStoInf \eqdef \lr{\sup_{\mu \in \meas{1}(\Tset)} \int_\Yset \sup_{\theta, \theta ' \in \Tset} \frac{k(\theta,y)^2}{k(\theta ', y)} \left| \falpha ' \lr{ \frac{\mu k(y)}{p(y)}} \right|^2 \nu(\rmd y)}^{1/2} < \infty \eqsp,
\end{align}
and define $\Psif(\mu^\star) = \inf_{\zeta \in \meas{1, \hmu_1}(\Tset)} \Psif(\zeta)$. Then, for any $N \in \nstar$,
\begin{align}\label{eq:rate:StoMD}\PE\lrb{\Psif\lr{\sum_{n=1}^N w_n \hmu_n}-\Psif(\mu^\star)}\leq \frac{\cteLipStoInf^2 \sum_{n=1}^N \eta_n^2 /2}{\sum_{n=1}^{N} \eta_n} + \frac{KL(\mu^\star || \hmu_1)}{\sum_{n=1}^{N} \eta_n} \eqsp,
\end{align}
In particular, the decreasing policy $\eta_n = \eta_0/\sqrt{n}$ yields an $O(\log(N)/\sqrt{N})$ bound in \eqref{eq:rate:StoMD}. Furthermore, the constant policy $\eta_n = \eta_0/\sqrt{N}$ yields an $O(1/\sqrt{N})$ bound in \eqref{eq:rate:StoMD}, which is minimal for $\eta_0 = \cteLipStoInf^{-1} \sqrt{2 KL\couple[{\mu}^\star][{\hmu}_1]}$.
\end{thm}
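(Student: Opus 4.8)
The plan is to run the standard stochastic mirror-descent argument, but to control the per-step remainder through the importance-sampling structure of $\bmufk[\hmu_n]$ rather than through an $L^\infty$ bound on the gradient, since \eqref{bound_unif} only furnishes $L^2$-type control. Write $G_n\eqdef\bmufk[\hmu_n]$ for short. First I would record two convexity facts. By joint convexity of $\Psif$ and Jensen's inequality, $\Psif(\sum_{n=1}^N w_n\hmu_n)\le\sum_{n=1}^N w_n\Psif(\hmu_n)$; and by the first-order convexity inequality for $\falpha$ (the linearisation \eqref{eq:bound2}, integrated against $p(y)\nu(\rmd y)$ and combined with Fubini and the definition of $\bmuf$) one obtains the subgradient bound $\Psif(\hmu_n)-\Psif(\mu^\star)\le[\hmu_n-\mu^\star](\bmuf[\hmu_n])$. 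Since $w_n=\eta_n/\sum_j\eta_j$, these two steps reduce the theorem to bounding $\PE[\sum_n\eta_n[\hmu_n-\mu^\star](\bmuf[\hmu_n])]$.

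Next I would split the gradient into its stochastic part and the estimation error, $[\hmu_n-\mu^\star](\bmuf[\hmu_n])=[\hmu_n-\mu^\star](G_n)+[\hmu_n-\mu^\star](\bmuf[\hmu_n]-G_n)$. The estimator is unbiased, $\PE[G_n\mid\mathcal F_n]=\bmuf[\hmu_n]$, and $\hmu_n,\mu^\star$ are $\mathcal F_n$-measurable, so the error term is a martingale increment whose total expectation vanishes. For the stochastic part I would establish the exact mirror-descent identity for the entropic update $\hmu_{n+1}\propto\hmu_n e^{-\eta_n G_n}$: a direct computation of $KL\couple[\mu^\star][\hmu_n]-KL\couple[\mu^\star][\hmu_{n+1}]$ (legitimate because each update preserves $\mu^\star\preceq\hmu_n$) gives
$$\eta_n[\hmu_n-\mu^\star](G_n)=KL\couple[\mu^\star][\hmu_n]-KL\couple[\mu^\star][\hmu_{n+1}]+R_n,\qquad R_n\eqdef\eta_n\hmu_n(G_n)+\log\hmu_n(e^{-\eta_n G_n}).$$
Summing over $n$ telescopes the KL terms, leaving $KL\couple[\mu^\star][\hmu_1]$ after discarding $KL\couple[\mu^\star][\hmu_{N+1}]\ge0$, so the whole argument comes down to controlling $\sum_n\PE[R_n]$.

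The hard part will be bounding $\PE[R_n]$ by $\tfrac{\eta_n^2}{2}\cteLipStoInf^2$, and this is precisely where the $L^2$ nature of \eqref{bound_unif} must be exploited carefully, because a naive exponential-moment estimate fails without boundedness of $G_n$. Setting $\psi(s)=\log\hmu_n(e^{-sG_n})$, one checks $\psi(0)=0$, $\psi'(0)=-\hmu_n(G_n)$ and $\psi''(s)=\Var_{\pi_s}(G_n)\ge0$ with $\pi_s\propto\hmu_n e^{-sG_n}$, so that $R_n=\int_0^{\eta_n}(\eta_n-s)\Var_{\pi_s}(G_n)\,\rmd s\le\tfrac{\eta_n^2}{2}\sup_{\theta\in\Tset}G_n(\theta)^2$, the last step using $\Var_{\pi_s}(G_n)\le\PE_{\pi_s}[G_n^2]\le\sup_\theta G_n(\theta)^2$. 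The crucial point is that this pointwise-in-$\theta$ bound eliminates the troublesome exponential tilting $\pi_s$, after which only a plain second moment survives. Using $G_n(\theta)^2\le\frac1M\sum_m (k(\theta,Y_{m,n+1})/\hmu_n k(Y_{m,n+1}))^2|\falpha'(\hmu_n k(Y_{m,n+1})/p(Y_{m,n+1}))|^2$, taking $\sup_\theta$, integrating each i.i.d.\ term against $\hmu_n k$, and bounding $1/\hmu_n k(y)\le\sup_{\theta'}1/k(\theta',y)$ reproduces exactly $\int_\Yset\sup_{\theta,\theta'}\frac{k(\theta,y)^2}{k(\theta',y)}|\falpha'(\hmu_n k(y)/p(y))|^2\nu(\rmd y)\le\cteLipStoInf^2$; hence $\PE[\sup_\theta G_n^2\mid\mathcal F_n]\le\cteLipStoInf^2$ and $\PE[R_n]\le\tfrac{\eta_n^2}{2}\cteLipStoInf^2$.

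Finally I would assemble the pieces: dividing the summed bound by $\sum_n\eta_n$ and inserting the Jensen step yields \eqref{eq:rate:StoMD}. The rate statements then follow by routine computation. For $\eta_n=\eta_0/\sqrt n$ one has $\sum_n\eta_n^2=O(\log N)$ and $\sum_n\eta_n=\Theta(\sqrt N)$, giving the $O(\log N/\sqrt N)$ bound; for the constant policy $\eta_n=\eta_0/\sqrt N$ one has $\sum_n\eta_n^2=\eta_0^2$ and $\sum_n\eta_n=\eta_0\sqrt N$, so the bound equals $\tfrac{1}{\sqrt N}(\tfrac{\cteLipStoInf^2\eta_0}{2}+\tfrac{KL\couple[\mu^\star][\hmu_1]}{\eta_0})$, which an AM--GM optimisation in $\eta_0$ minimises at $\eta_0=\cteLipStoInf^{-1}\sqrt{2\,KL\couple[\mu^\star][\hmu_1]}$.
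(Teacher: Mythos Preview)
Your argument is correct and follows the same overall mirror-descent skeleton as the paper's proof: convexity of $\Psif$ to pass to a linear functional, unbiasedness $\PE[G_n\mid\mathcal F_n]=\bmuf[\hmu_n]$ to replace the true gradient by its estimate, a KL-telescoping identity induced by the exponential update, and then a per-step remainder bound whose expectation is controlled by $\cteLipStoInf^2$. The two proofs diverge only in how that remainder is handled.

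The paper writes the remainder as $\int\log(\rmd\hmu_n/\rmd\hmu_{n+1})(\rmd\hmu_n-\rmd\hmu_{n+1})-KL(\hmu_{n+1}\|\hmu_n)$, bounds the first term by $\eta_n\,\cteLipSto[n]\,\|\hmu_n-\hmu_{n+1}\|_{TV}$ with $\cteLipSto[n]=M^{-1}\sum_m\sup_\theta|h_m(\theta)|$, invokes Pinsker's inequality for the second term, and then completes the square to obtain $\eta_n^2\cteLipSto[n]^2/2$, after which Jensen and \eqref{bound_unif} give $\PE[\cteLipSto[n]^2]\le\cteLipStoInf^2$. You instead recognise the same remainder as $R_n=\eta_n\hmu_n(G_n)+\log\hmu_n(e^{-\eta_n G_n})$ and expand the log-moment-generating function to second order, bounding the tilted variance crudely by $\sup_\theta G_n(\theta)^2$; your expectation step then reproduces exactly the paper's integral. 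The two routes yield the identical bound $\tfrac{\eta_n^2}{2}\cteLipStoInf^2$ in expectation. Your approach is slightly more self-contained in that it avoids Pinsker's inequality, while the paper's argument is the classical Nemirovski--Juditsky--Lan--Shapiro template and makes the role of the total-variation geometry more explicit.
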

The proof of \Cref{thm:emd:sto} can be found in \Cref{subsec:CVstoMD} and we give below an example satisfying condition \eqref{bound_unif}.
\begin{ex}\label{ex:GaussianMixtureModels}
Consider the case $\Yset = \rset^d$ and $\alpha = 1$. Let $r> 0$ and let $\Tset = \mathcal{B}(0,r) \subset \rset^d$. Furthermore, let $K_{h}$ be a Gaussian transition kernel with bandwidth $h$ and denote by $k_{h}$ its associated kernel density. Finally, let $p$ be a mixture of two $d$-dimensional Gaussian densities such that $p(y) = 0.5 \frac{e^{-\| y - \theta^\star_1 \|^2 / 2}}{(2 \pi)^{d/2}}+ 0.5 \frac{e^{-\| y - \theta^\star_2 \|^2 / 2}}{(2 \pi)^{d/2}} $ for all $y \in \Yset$ with $\theta_1^\star, \theta_2^\star \in \Tset$. Then, \eqref{bound_unif} holds and we can apply \Cref{thm:emd:sto} (see \Cref{sec:BoundUnif} for details).
\end{ex}
Notice that the $O(1/\sqrt{N})$ convergence rate from \Cref{thm:emd:sto} holds under minimal assumptions on $\Psif$. However, bridging the gap with the $O(1/N)$ convergence rate in \Cref{thm:admiss:spec} typically requires much stronger smoothness and strong-convexity assumptions on $\Psif$ which can be hard to satisfy in practice (see \cite[Theorem 6.2]{MAL-050} for the statement of this result and \cite{chriefabdellatif2019generalization} for an example in Online Variational Inference). Bypassing any of these assumptions like we did in the ideal case in \Cref{thm:admiss:spec} in order to improve on \Cref{thm:emd:sto} constitutes an interesting area of research which is beyond the scope of this paper. \newline 

As for the Stochastic version of Power Descent, we establish the total variation convergence of $\iterationK (\mu)$ towards $\iteration (\mu)$ as $M$ goes to infinity for all $\mu \in \meas{1}(\Tset)$. To do so, consider i.i.d random variables $Y_1, Y_2, \ldots$ with common density $\mu k$ w.r.t $\nu$, defined on the same probability space $(\Omega,\mcf,\PP)$ and denote by $\PE$ the associated expectation operator. We then have \Cref{prop:discretize} below.

\begin{prop}\label{prop:discretize}
Assume \ref{hyp:positive}. Let $\alpha \in \Rset \setminus \lrcb{1}$, $\eta > 0$, $\cte$ be such that $(\alpha-1)\cte \geq 0$ and set $\GammaAlpha(v) = [ \left(\alpha -1 \right)v + 1]^{\eta/(1-\alpha)}$ for all $v \in \Domain$. Let $\mu \in \meas{1}(\Tset)$ be such that $\Psif(\mu)<\infty$, \eqref{eq:admiss} holds and
\begin{equation}
\int_\Tset \mu(\rmd \theta)  \PE\lrb{\lrcb{\frac{k(\theta,Y_1)}{\mu k(Y_1)} \lr{\frac{\mu k(Y_1)}{p(Y_1)}}^{\alpha-1} + (\alpha-1)\cte}^{\frac{\eta}{1-\alpha}}} <\infty\eqsp.
\end{equation}
Then,
$$
\llim_{M \to \infty} \tv{\iterationK (\mu) -\iteration (\mu)}=0\,, \quad \PP-\as
$$
\end{prop}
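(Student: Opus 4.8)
The plan is to reduce the total-variation statement to an $L^1(\mu)$ convergence of the two (unnormalised) densities, and then to establish that $L^1$ convergence by a law-of-large-numbers argument made uniform through the extra integrability assumption. First I would write both iterates as measures dominated by $\mu$: setting $W_M(\theta)=\GammaAlpha(\bmufk(\theta)+\cte)$ and $W(\theta)=\GammaAlpha(\bmuf(\theta)+\cte)$, we have $\iterationK(\mu)(\rmd\theta)=\mu(\rmd\theta)\,W_M(\theta)/\mu(W_M)$ and $\iteration(\mu)(\rmd\theta)=\mu(\rmd\theta)\,W(\theta)/\mu(W)$, and a direct manipulation of the two densities gives
\begin{equation*}
\tv{\iterationK(\mu)-\iteration(\mu)} \leq \frac{1}{\mu(W_M)}\int_\Tset \lrav{W_M-W}\,\rmd\mu \;+\; \frac{\lrav{\mu(W)-\mu(W_M)}}{\mu(W_M)}\eqsp.
\end{equation*}
Since $\GammaAlpha$ is positive, $W>0$ and \eqref{eq:admiss} give $\mu(W)\in(0,\infty)$; hence it suffices to establish the $\PP$-almost sure convergence $\int_\Tset\lrav{W_M-W}\,\rmd\mu\to0$, which also forces $\mu(W_M)\to\mu(W)>0$ and makes the displayed bound effective.

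Next I would prove the pointwise convergence $W_M\to W$. Because $\bmufk(\theta)=\frac1M\sum_{m=1}^M\frac{k(\theta,Y_m)}{\mu k(Y_m)}\falpha'\lr{\frac{\mu k(Y_m)}{p(Y_m)}}$ is an empirical mean of i.i.d. terms whose common expectation equals $\bmuf(\theta)$ by importance-sampling unbiasedness, the strong law of large numbers gives $\bmufk(\theta)\to\bmuf(\theta)$ $\PP$-almost surely for each fixed $\theta$. A Fubini argument on $(\Omega\times\Tset,\PP\otimes\mu)$ upgrades this to a single $\PP$-almost sure event on which $\bmufk(\theta)\to\bmuf(\theta)$ for $\mu$-a.e. $\theta$, and the continuity of $\GammaAlpha$ then yields that $W_M(\theta)\to W(\theta)$ for $\mu$-a.e. $\theta$, $\PP$-$\as$

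The remaining and decisive step is to promote this $\mu$-a.e. convergence to convergence in $L^1(\mu)$, and this is where the integrability hypothesis is used. Writing the base of $\GammaAlpha$ as an empirical average, $(\alpha-1)(\bmufk(\theta)+\cte)+1=\frac1M\sum_{m=1}^M d_m(\theta)$ with $d_m(\theta)=\frac{k(\theta,Y_m)}{\mu k(Y_m)}\lr{\frac{\mu k(Y_m)}{p(Y_m)}}^{\alpha-1}+(\alpha-1)\cte+\lr{1-\frac{k(\theta,Y_m)}{\mu k(Y_m)}}$, I would dominate $W_M$ by an i.i.d. average $g_M$ of single-sample quantities: Jensen's inequality applied to the power map $x\mapsto x^{\eta/(1-\alpha)}$ when it is convex (i.e. $\alpha>1$, or $\alpha<1$ with exponent $\geq1$), and the elementary bound $x^s\leq 1+x$ when it is concave. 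In every case $\mu(g_M)$ is an empirical mean of i.i.d. scalars whose expectation is finite precisely by the hypothesis (together with $\Psif(\mu)<\infty$ and \eqref{eq:admiss}), so the scalar strong law of large numbers ensures that $\mu(g_M)$ converges $\PP$-almost surely to that finite limit, while $g_M(\theta)\to g(\theta)$ for $\mu$-a.e. $\theta$. Pratt's generalised dominated convergence theorem, applied with the dominating sequence $g_M$, then delivers the $\PP$-almost sure convergence $\int_\Tset\lrav{W_M-W}\,\rmd\mu\to0$, and in particular $\mu(W_M)\to\mu(W)$, which concludes the proof via the first step.

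The hard part is this last $L^1$ upgrade: the dominating sequence $g_M$ is itself random and $M$-dependent, so ordinary dominated convergence does not apply and one must couple a law of large numbers for the scalar integrals $\mu(g_M)$ with Pratt's lemma. Careful bookkeeping is needed to absorb the correction $1-\frac{k(\theta,Y_m)}{\mu k(Y_m)}$ stemming from the $-1$ inside $\falpha'$ into the integrability bound, to treat separately the convex regime $\alpha>1$ (negative exponent $\eta/(1-\alpha)$, $\cte\geq0$) and the regime $\alpha<1$ (positive exponent, $\cte\leq0$), and to check that the empirical bases stay positive—so that $W_M$ is well-defined, which for large $M$ follows from their convergence to the positive exact base—and that the normalising constants $\mu(W_M)$ remain bounded away from $0$ and $\infty$ almost surely, so that the ratio bound of the first step is legitimate.
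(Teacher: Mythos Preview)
Your plan is correct and matches the paper's approach closely: the same total-variation bound via the triangle inequality, pointwise convergence $\bmufk\to\bmuf$ by the strong law lifted to a single almost-sure event through Fubini (which the paper isolates as an ``Integrated Law of Large Numbers'' lemma), and the $L^1(\mu)$ upgrade via Pratt's generalised dominated convergence theorem with a random $M$-dependent majorant built from empirical averages. Two minor differences are worth noting. First, you normalise the total-variation bound by the random $\mu(W_M)$, whereas the paper normalises by the deterministic $\mu(W)=\mu(\GammaAlpha(\bmuf+\cte))$; the latter is cleaner because it is positive and finite from the outset and spares you the extra step of arguing that $\mu(W_M)$ is eventually bounded away from zero. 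Second, in the concave regime $\eta/(1-\alpha)\in(0,1]$ the paper employs a truncation argument (splitting on $\{g\leq M'\}$ and letting $M'\to\infty$), while your elementary bound $x^\phi\leq 1+x$ together with $\mu(|\bmuf|)<\infty$---itself a consequence of $\Psif(\mu)<\infty$, as the paper also records---already furnishes a valid dominating sequence whose $\mu$-integral converges by the scalar law of large numbers; this shortcut works and is simpler than the paper's route.
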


The proof is deferred to \Cref{subsec:proof:discretize}. The crux of the proof consists in applying a Dominated Convergence Theorem to non-negative real-valued $(\Tsigma \otimes \mcf,{\mathcal B}(\rset_{\geq 0}))$-measurable functions, which requires to consider a Generalized version of the Dominated Convergence Theorem (\Cref{lem:gen:tcd}) and an Integrated Law of Large Numbers (\Cref{lem:lln:integrated}). \newline

\textbf{Mixture Models.} We now address the case where $\hmu_1$ corresponds to a weighted sum of Dirac measures. This case is of particular interest to us since as we shall see, for any kernel $K$ of our choice, the \aei descent procedure simplifies and provides an update formula for the mixture weights of the corresponding mixture model $\hmu_1 K$.

Let $J \in \nstar$ and let $\theta_1, \ldots, \theta_J \in \Tset$ be fixed. We start by introducing the simplex of $\rset^J$
$$
\simplex_J = \set{ \lbd{}= (\lambda_1, \ldots, \lambda_J) \in \rset^J}{ \forall j \in \lrcb{1,\ldots, J}, \eqsp \lambda_j \geq 0 \eqsp \mbox{and} \eqsp \sum_{j=1}^J \lambda_j = 1} \eqsp,
$$
and for all $\lbd{} \in \simplex_J$, we define $\mu_{\lbd{}} \in \meas{1}(\Tset)$ by $\mu_{\lbd{}} = \sum_{j=1}^J \lambda_j \delta_{\theta_j}$.
Then, $\mu_{\lbd{}} k(y) = \sum_{j = 1}^J \lambda_j k(\theta_{j}, y)$ corresponds to a mixture model and if we let $(\hmu_n)_{n\in\nstar}$ be defined by $\hmu_1 = \mu_{\lbd{}}$ and
\begin{equation*}
\hmu_{n+1}= \iterationK (\hmu_n)\;,\qquad n\in\nstar \eqsp,
\end{equation*}
an immediate induction yields that for every $n\in \nstar$, $\hmu_n$ can be expressed as $\hmu_n=\sum_{j=1}^J \lbd[j]{n} \delta_{\theta_j}$ where $\lbd{n}=(\lbd[1]{n},\ldots,\lbd[J]{n}) \in \simplex_J$ satisfies the  initialisation $\lbd{1}=\lbd{}$ and the update formula: for all $n \in \nstar$ and all $j \in \{1,\ldots,J\}$,
\begin{align}\label{eq:iteration:mixture}
\lbd[j]{n+1}= \frac{\lbd[j]{n} \GammaAlpha(\bmufk[\hmu_n](\theta_j) + \cte)}{ \sum_{i=1}^J \lbd[i]{n} \GammaAlpha(\bmufk[\hmu_n](\theta_i) + \cte)} \eqsp,
\end{align}
with $Y_{1, n+1}, \ldots, Y_{M, n+1}$ drawn independently from $\hmu_n k$ conditionally on $\mathcal F_n$ and $\bmufk[\hmu_n](\theta_j)$ is given by \eqref{eq:def:bmufk} for all $j = 1 \ldots J$. This leads to Algorithm \ref{algo:mixture} below. \newline
\SetInd{0.8em}{-1.4em}
\begin{algorithm}[H]
\caption{{\em Mixture Stochastic \aei descent}}
\label{algo:mixture}
\textbf{Input:} $p$: measurable positive function, $K$: Markov transition kernel, $M$: number of samples, $\Theta_J = \{\theta_1, \ldots, \theta_J\} \subset \Tset$: parameter set. \\
\textbf{Output:} Optimised weights $\lbd{}$. \\
\BlankLine
Set $\lbd{} = [\lambda_{1, 1}, \ldots, \lambda_{J,1}]$.\\
\While{not converged}{
\begin{enumerate}[label={}]
\item \underline{Sampling step} : \setlength{\parindent}{1cm} Draw independently $M$ samples $Y_{1}, \ldots, Y_{M}$ from $\mu_{\lbd{}} k$.
\item \underline{Expectation step} : \setlength{\parindent}{1cm} Compute $\boldsymbol{B}_{\lbd{}} = (b_{j})_{1 \leq j \leq J}$ where 
\begin{align}\label{eq:aj}
b_{j} = \dfrac{1}{M} \sum \limits_{m=1}^M \frac{k(\theta_j, Y_{m})}{\mu_{\lbd{}} k(Y_{m})}  \falpha'\left( \frac{\mu_{\lbd{}} k(Y_{m})}{p(Y_{m})} \right)
\end{align} 

\noindent and deduce $\boldsymbol{W}_{\lbd{}}  = (\lambda_j \GammaAlpha(b_{j} + \cte))_{1\leq j\leq J}$ and $w_{\lbd{}}  = \sum_{j=1}^J \lambda_j \GammaAlpha(b_{j} + \cte)$.
\item \underline{Iteration step} : \setlength{\parindent}{1cm} Set
\begin{align*}
\lbd{} \leftarrow \frac{1}{w_{\lbd{}} } \boldsymbol{W}_{\lbd{}}
\end{align*}
\end{enumerate}
}
\end{algorithm}

In this particular framework, most of the computing effort at each step lies within the computation of the vector $(\bmufk[\hmu_n](\theta_j))_{1 \leq j \leq J}$. Interestingly, these computations can also be used to obtain an estimate of the Evidence Lower Bound (resp. the Renyi-Bound \cite{2016arXiv160202311L}) when $p(y) = p(y, \data)$. These two quantities, which are written explicitly in \Cref{rem:RenyiBound} from \Cref{sec:addRes}, allow us to assess the convergence of the algorithm and provide a bound on the log-likelihood (see \cite[Theorem 1]{2016arXiv160202311L}). Note also that if there is a need for very large $J$, one can approximate the summation appearing in $\hmu_{n} k$ using subsampling. \newline

An important point is that Algorithm \ref{algo:mixture} does not require any information on how the $\lrcb{\theta_1, \ldots, \theta_J}$ have been obtained in order to infer the optimal weights as it draws information from samples that are generated from $\mu_{\lbd{}} k$. Since the algorithm leaves $\lrcb{\theta_1, \ldots, \theta_J}$ unchanged throughout the optimisation of the mixture weights (we call it an {\em Exploitation Step}), a natural idea is to combine Algorithm \ref{algo:mixture} with an {\em Exploration step} that modifies the parameter set, which gives Algorithm \ref{algo:adaptive} below. 

\SetInd{0.8em}{-1.4em}
\begin{algorithm}[H]
\caption{{\em Complete Exploitation-Exploration Algorithm}}
\label{algo:adaptive}
\textbf{Input}: $p$: measurable positive function, $\alpha $: $\alpha$-divergence parameter, $(\GammaAlpha, \cte)$: chosen  as per \Cref{table:admiss}, $q_0$: initial sampler, $K$: Markov transition kernel, $(M_t)_t$: number of samples,  $(J_t)_t$: dimension of parameter set. \\
\textbf{Output}: Optimised weights $\lbd{}$ and parameter set $\Theta$. \\
Draw $\thetat[1][0], \ldots, \thetat[J_0][0]$ from $q_0$. Set $t = 0$. \\
\While{not converged}{
\begin{enumerate}[label={}]
\item \underline{Exploitation step} : \setlength{\parindent}{1cm} Set $\Theta = \{ \thetat[1][t], \ldots, \thetat[J_t][t] \}$. Perform Mixture Stochastic \aei descent and obtain $\lbd{}$.
\item \underline{Exploration step} : \setlength{\parindent}{1cm} Perform any exploration step of our choice and obtain $\thetat[1][t+1], \ldots, \thetat[J_{t+1}][t+1]$. Set $t = t+1$.
\end{enumerate}
}
\end{algorithm}
Note that this algorithm is very general, as any Exploration Step can be envisioned. We also have several other levels of generality in our algorithm since we are free to choose the kernel $K$, the $\alpha$-divergence being optimised and we have stated different possible choices for the couple $(\GammaAlpha, \cte)$.

As a side remark, notice also that we recover the mixture weights update rules from the Population Monte Carlo algorithm applied to reverse Kullback-Leibler minimisation \cite{2007arXiv0708.0711D} by considering the Power Descent with $\alpha = 0$ and $\eta = 1$. We have thus embedded this special case into a more general framework.

We now move on to numerical experiments in the next section.

\section{Numerical experiments}
\label{section:applications} In this part, we want to assess how Algorithm \ref{algo:adaptive} performs on both toy and real-world examples. To do so, we first need to specify the kernel $K$ and an algorithm for the Exploration Step.

\paragraph{Kernel} Let $K_{h}$ be a Gaussian transition kernel with bandwidth $h$ and denote by $k_{h}$ its associated kernel density. Given $J \in \nstar$ and $\theta_1, \ldots, \theta_J \in \Tset$, we then work within the approximating family
$$
\set{y \mapsto \mu_{\lbd{}} k_h(y) = \sum_{j= 1}^{J} \lambda_j k_h(y- \theta_j)}{\lbd{} \in \simplex_J }\eqsp.
$$

\paragraph{Exploration Step} At time $t = 1 \ldots T$, we resample among $\lrcb{\thetat[1][t], \ldots, \thetat[J_t][t]}$ according to the optimised mixture weights $\lbd{}$. The obtained sample $\{\thetat[1][t+1], $ $\ldots, \thetat[J_{t+1}][t+1]\}$ is then perturbed stochastically using the Gaussian transition kernel $K_{h_t}$, which gives us our new parameter set. The hyperparameter $h_t$ is adjusted according to the number of particles so that $h_t \propto J_t^{-1/(4 + d)}$, where $d$ is the dimension of the latent space (the optimal rate in nonparametric estimation when the function is at least $2$-times continuously differentiable and the kernel has order $2$ \cite{stone1982}). \newline

Next, we are interested in the choice of $\alpha$. The hyperparameter $\alpha$ allows us to choose between \emph{mass-covering} divergences which tend to cover all the modes ($\alpha \ll 0$) and \emph{mode-seeking} divergences that are attracted to the mode with the largest probability mass ($\alpha \gg 1$), the case $\alpha \in (0,1)$ corresponding to a mix of the two worlds (see for example \cite{divergence-measures-and-message-passing}).

Depending on the learning task, the optimal $\alpha$ may differ and understanding how to select the value of $\alpha$ is still an area of ongoing research. However, the case $\alpha < 1$ presents the advantage that $\bmufk$ is always finite. Indeed, for all $\alpha \in \rset \setminus \lrcb{1}$, we have
$$
\bmuf(\theta) = \frac{1}{\alpha-1} \int_\Yset \frac{k(\theta,y)}{\mu k(y)} \left(\frac{p(y, \data)}{\mu k (y)} \right)^{1- \alpha} \mu k(y) \nu(\rmd y) - \frac{1}{\alpha-1} \eqsp,
$$
and as the dimension grows, the conditions of support are often not met in practice, meaning that there exists $A \in \Ysigma$ such that $p(A, \data) = 0$ and $\mu k (A) > 0$. This implies that whenever $\alpha > 1$ we might have that $\bmufk(\theta) = \infty$ and that the $\alpha$-divergence (or equivalently the Renyi-bound as written in \Cref{rem:RenyiBound} from \Cref{sec:addRes}) is infinite, which is the sort of behavior we would like to avoid. Thus, we restrict ourselves to the case $\alpha \leq 1$ in the following numerical experiments. Note that the limiting case $\alpha = 1$, corresponding to the commonly-used forward Kullback-Leibler objective function, also suffers from this poor behavior, but is still considered in the experiments as a reference.

We now move on to our first example where we investigate the impact of different choices of $\GammaAlpha$. The code for all the subsequent numerical experiments is available at \href{https://github.com/kdaudel/AlphaGammaDescent}{https://github.com/kdaudel/AlphaGammaDescent}.  

\subsection{Toy Example} Following \Cref{ex:GaussianMixtureModels}, the target $p$ is a mixture of two $d$-dimensional Gaussian densities multiplied by a positive constant $Z$ such that
$$
p(y) = Z \times \left[ 0.5 \mathcal{N}(\boldsymbol{y}; -s \boldsymbol{u_d}, \boldsymbol{I_d}) + 0.5 \mathcal{N}(\boldsymbol{y}; s \boldsymbol{u_d}, \boldsymbol{I_d}) \right] \eqsp,
$$
where $\boldsymbol{u_d}$ is the $d$-dimensional vector whose coordinates are all equal to $1$, $s = 2$ and $Z = 2$. $(J_t)_t$ and $(M_t)$ are kept constant equal to $J = M = 100$, $\cte = 0$ and the initial weights are set to be $[1/J, \ldots, 1/J]$. The number of inner iterations in the \aei descent is set to $N=10$ and for all $n =1\ldots N$, we use the adaptive learning rate $\eta_n = \eta_0/\sqrt{n}$ with $\eta_0 = 0.5$. We set the initial sampler to be a centered normal distribution with covariance matrix $5 \boldsymbol{I_d}$, where $\boldsymbol{I_d}$ is the identity matrix. We compare three versions of the \aei algorithm:
\begin{itemize}
\item \underline{$0.5$-Mirror Descent :} $\GammaAlpha(v) = e^{-\eta v}$ with $\alpha = 0.5$,
\item \underline{$0.5$-Power Descent :} $\GammaAlpha(v) = [ \left(\alpha -1 \right)v + 1]^{\eta/(1-\alpha)}$ with $\alpha =0.5$,
\item \underline{$1$-Mirror Descent :} $\GammaAlpha(v) = e^{-\eta v}$ with $\alpha = 1$.
\end{itemize}
For each of them, we run $T= 20$ iterations of Algorithm \ref{algo:adaptive} and we replicate the experiment 100 times for $d = \lrcb{8, 16, 32}$. The results for the 0.5-Mirror and 0.5-Power Descent are displayed on Figure \ref{fig:toy}.

\begin{figure}[!ht]
\caption{Plotted is the average Renyi-Bound for the 0.5-Power and 0.5-Mirror Descent in dimension $d = \lrcb{8, 16,32}$ computed over 100 replicates with $\eta_0 = 0.5$.}
\label{fig:toy}
\begin{center}
\begin{tabular}{ccc}
\includegraphics[scale=0.27]{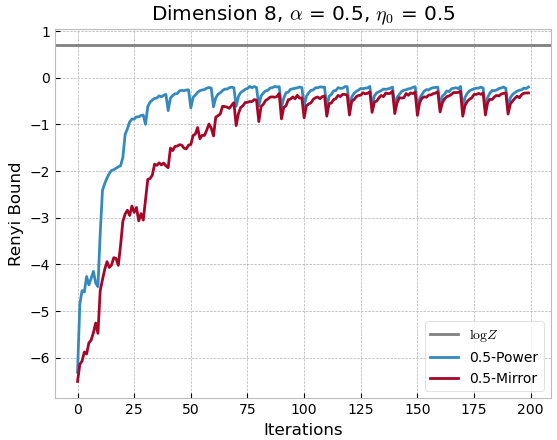}  &
\includegraphics[scale=0.27]{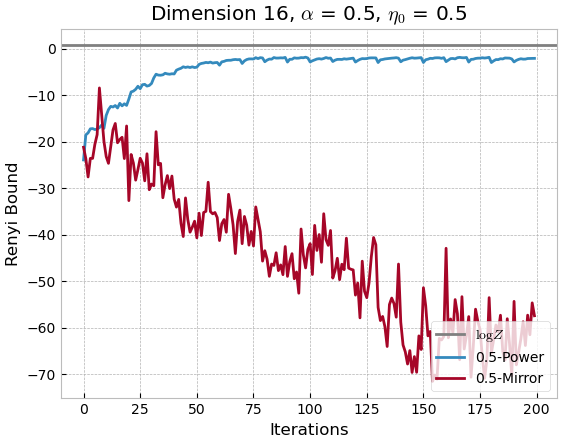} & \includegraphics[scale=0.27]{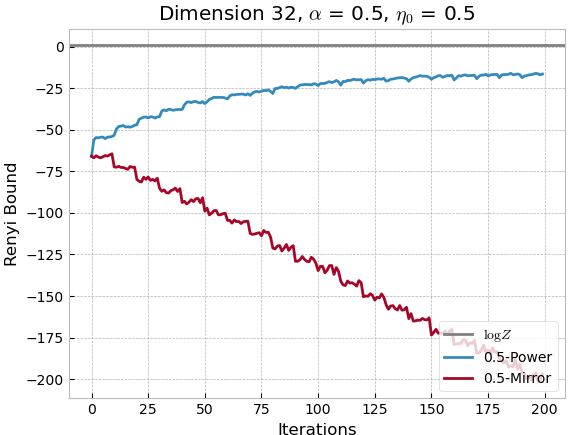}
\end{tabular}
\end{center}
\end{figure}
A first remark is that we are able to observe the monotonicity property from \Cref{thm:admiss} (the Renyi-Bound varies like $\Psif(\mu_n)^{\alpha-1}$) for the 0.5-Power Descent, the jumps in the Renyi-Bound corresponding to an update of the parameter set. Furthermore, we see that the 0.5-Mirror Descent (which would have been the default choice based on the existing optimisation literature) converges more slowly than the 0.5-Power Descent in dimension $8$. An even more striking aspect however is that, as the dimension grows, the 0.5-Mirror Descent is unable to learn and the algorithm diverges.

These two different behaviors for the Power and Mirror Descent can be explained by rewriting the update formulas for any $\alpha <1$ under the form
\begin{align*}
  \mbox{Mirror :} \quad \lbd[j]{n} &\propto e^{\frac{\eta}{1-\alpha} \lrb{ (\alpha-1)\bmuf[\mu_{\lbd{n}}](\theta_j) + (\alpha-1)\cte }} \\
  \mbox{Power :} \quad \lbd[j]{n} &\propto e^{ \frac{\eta}{1-\alpha} \log \lrb{ (\alpha-1)\bmuf[\mu_{\lbd{n}}](\theta_j) + (\alpha-1)\cte}} \eqsp.
\end{align*}
In the Power case, an extra log transformation has been added, which allows to discriminate between small values of $\bmuf[\mu_{\lbd{n}}]$. 
 Since the values of $\bmuf[\mu_{\lbd{n}}]$ tend to get smaller as the dimension grows, the impact of adding an extra log transformation becomes increasingly visible: the Mirror Descent becomes more and more unable to differentiate between the different particles $\lrcb{\theta_1, \ldots, \theta_J}$ and is thus unable to learn. 

Finally, we compare how the 0.5-Power and 1-Mirror Descent perform at approximating the log-likelihood in dimension $d = \lrcb{8,16, 32}$. The results are plotted on Figure \ref{fig:compare}. Again, the 0.5-Power Descent comes across as faster and more stable compared to the 1-Mirror Descent as the dimension grows. Furthermore, it also does not fail in dimension 32, unlike the 1-Mirror Descent.

\begin{figure}[!ht]
\caption{Plotted is the average Log-likelihood for 0.5-Power and 1-Mirror Descent in dimension $d = \lrcb{8, 16,32}$ computed over 100 replicates with $\eta_0 = 0.5$.}
\label{fig:compare}
\begin{center}
\begin{tabular}{ccc}
\includegraphics[scale=0.27]{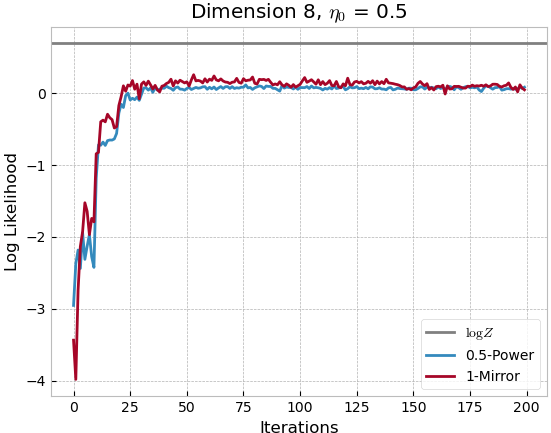} &
\includegraphics[scale=0.27]{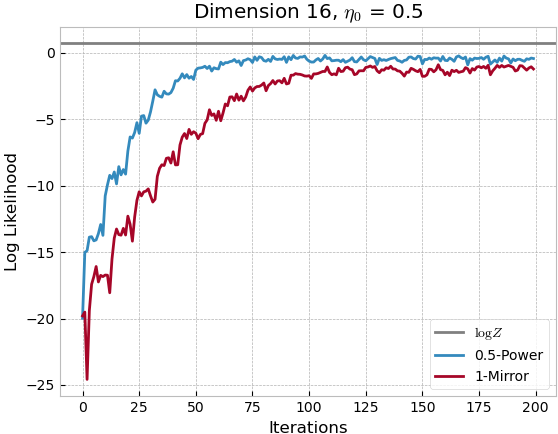} &
\includegraphics[scale=0.27]{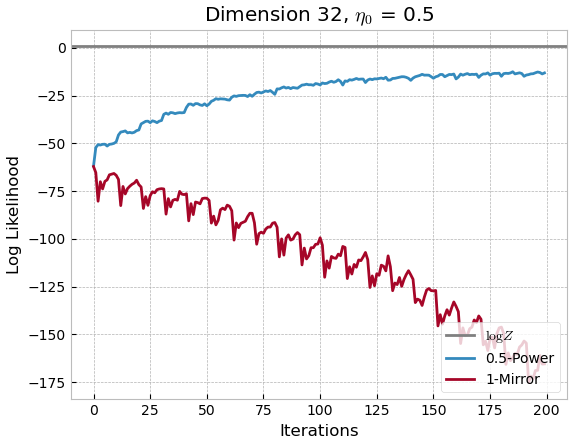}  \\
\end{tabular}
\end{center}
\end{figure}

Consequently, we see on this simple yet illustrative example that the Power Descent is a suitable alternative to the Mirror Descent as the dimension grows.

We are next interested in seeing how the \aei descent performs on a real-data example. Based on the numerical results obtained so far, we rule out the Mirror Descent for $\alpha \leq 1$ and we focus on the Power Descent in our second example.

\subsection{Bayesian Logistic Regression} We consider the Bayesian Logistic Regression from \Cref{ex:BLR} with $a = 1$ and $b = 0.01$. 

We test our algorithm for the \emph{Covertype} dataset ($581,012$ data points and $54$ features, available \href{https://www.csie.ntu.edu.tw/~cjlin/libsvmtools/datasets/binary.html}{here}). 
Computing $p(y, \data)$ constitutes the major computation bottleneck here, since $p(y, \data) = p_0(y) \prod_{i} p(x_i|y)$ with a very large number of data points. We can conveniently address this problem by approximating $p(y, \data)$ with subsampled mini-batches. We adopt this strategy here and consider mini-batches of size $100$.

We set $\alpha = 0.5$, $N = 1$, $T = 500$, $\cte = 0$, $J_0 = M_0 = 20$ and $J_{t+1} = M_{t+1} = J_t + 1$ for $t = 1\ldots T$ in Algorithm \ref{algo:adaptive}. The initial weights in the \aei descent are set to $\lbd{init,t} = [1/J_t, \ldots, 1/J_t]$ and the learning rate is set to $\eta_0 = 0.05$.

One thing that is very specific to the Exploration step that we used to run our experiments (and sampling-based Exploration steps algorithms in general) is that the particles $\lrcb{\thetat[1][t], \ldots, \thetat[J_t][t]}$ are sampled from a known distribution at each Exploration step. This means that we are able to infer information on $\lrcb{\thetat[1][t], \ldots, \thetat[J_t][t]}$ using Importance Sampling (IS) weights. We thus compare the Power \aei descent with a state-of-the-art Adaptive Importance Sampling-based (AIS) algorithm (see for example \cite{oh1992adaptive, kloek1978bayesian, chopin2004, delyon2019safe}).

We initialise $\lrcb{\theta_{1,0}, \ldots, \theta_{J_0,0}}$ by sampling $J_0$ points from the prior $p_0(y) = p_0(\beta)p_0(w|\beta)$ and set $q_0 = p_0$. Given $q_t$ at time $t$, we draw $J_t$ i.i.d samples $\left(\theta_{j,t}\right)_{1\leq j\leq J_t}$ from $q_t$ and we define $q_{t+1}(y) = \sum_{j=1}^{J_t} \lambda_{j, t} k_{h_t}(y-\theta_{j,t})$ where
\begin{equation}
\lambda_{j,t} \propto
\begin{cases}
   \frac{p(\theta_{j,t}, \data)}{q_t(\theta_{j,t})} & \mbox{(AIS)} \eqsp, \\
    \GammaAlpha(\bmufk[\mu_{\lbd{init,t}}](\theta_{j,t}) + \cte) & \mbox{(Power)} \eqsp. \\

\end{cases}
\end{equation}
Note that these two algorithms are computationally equivalent. Indeed, we choose $J_t = M_t$ and $N = 1$, that is we use an average of one sample from each $k(\theta_{j,t}, \cdot)$ to infer information on the relevance of the $\lrcb{\theta_{1,t}, \ldots,  \theta_{J_t,t}}$ with respect to one another. Comparatively, the AIS algorithm uses information directly available by computing the IS weights for $\lrcb{\theta_{1,t}, \ldots,  \theta_{J,t}}$.

We replicate the experiments 100 times. The Accuracy and Log-likelihood averaged over the 100 trials for both algorithms are displayed on \Cref{fig:compareRealData} and we see that the 0.5-Power Descent outperforms the AIS algorithm.

\begin{figure}[!ht]
\caption{Plotted are the average Accuracy and Log-likelihood computed over 100 replicates for Bayesian Logistic Regression on the Covertype dataset for the 0.5-Power Descent and the AIS algorithm.}
\label{fig:compareRealData}
\begin{center}
\begin{tabular}{cc}
\includegraphics[scale=0.35]{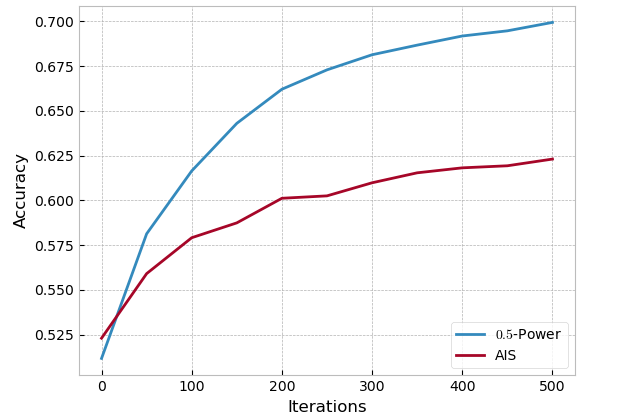} &
\includegraphics[scale=0.35]{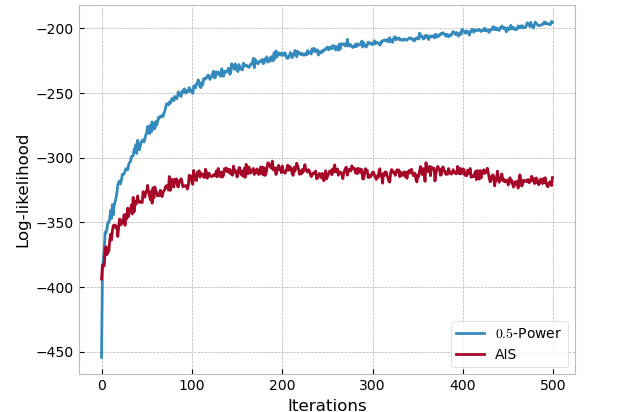}
\end{tabular}
\end{center}
\end{figure}

\section{Conclusion and perspectives} We introduced the \aei descent and studied its convergence. Our framework recovers the Entropic Mirror Descent and allows us to introduce the Power Descent. Furthermore, our procedure provides a gradient-based method to optimise the mixture weights of any given mixture model, without any information on the underlying distribution of the variational parameters. We demonstrated empirically the benefit of going beyond the Entropic Mirror Descent framework by using the Power Descent algorithm instead, which is a more scalable alternative. To conclude, we state several directions to extend our work on both a theoretical and a practical level.

\textit{Convergence rate.} One could seek to establish additional convergence rate results in both the Exact and Stochastic cases, by for example refining the proof of \Cref{thm:admiss} in the Stochastic case.

\textit{Variance Reduction.} One may want to resort to more advanced Monte Carlo methods in the estimation of $\bmuf[\mu_n]$ for variance reduction purposes, such as reusing the past samples in the approximation of $\bmuf[\mu_n]$.

\textit{Exploration Step.} Many other methods could be envisioned as an Exploration step and combined with the \aei descent.

\newpage




\newpage

\appendix

\begin{center}
\textbf{SUPPLEMENTARY MATERIAL}
\end{center}

\section{}

\subsection{Proof of \Cref{lem:mono:refined}}
\label{subsec:lem:mono:refined}

\begin{proof}[Proof of \Cref{lem:mono:refined}] On the probability space $(\Tset,\Tsigma,\mu)$, consider the random variable $U(\theta) = \bmuf(\theta) + \cte$ and let $V$ be an independent copy of $U$. For all $u \in \Domain$, define $\tgamma(u) = \GammaAlpha(u) / \PE[\GammaAlpha]$. Let us now prove that
$$
A_\alpha \geq \frac{\ctemono}{2}\Var_\mu(\bmuf) \eqsp.
$$
We study the cases $\alpha = 1$ and $\alpha \in \rset \setminus \lrcb{1}$ separately.
\begin{enumerate}[label=(\alph*),wide=0pt, labelindent=\parindent]
  \item Case $\alpha = 1$. In this case,
\begin{align*}
A_1 = \Cov(\log \tgamma (U) + U, 1- \tgamma(U)) \eqsp.
\end{align*}
Using that $\PE[1- \tgamma] = 0$, we can rewrite $A_1$ under the form
\begin{align*}
A_1 &= \frac{1}{2} \PE\left[(\log \tgamma(U) + U - \log \tgamma(V) + V)(- \tgamma(U) + \tgamma(V)) \right] \\
&= \frac{1}{2} \PE\left[\frac{\log \tgamma(U) + U - (\log \tgamma(V)+V)}{U - V} \frac{- \tgamma(U) + \tgamma(V)}{U - V} (U-V)^2 \right] \\
&\geq \frac{\ctemono[1]}{2}\Var_\mu(\bmuf[\mu][1]) \eqsp.
\end{align*}

\item Case $\alpha \in \rset \setminus \lrcb{1}$. Set $\cte' = \cte - \frac{1}{\alpha-1}$. In this case,
$$
A_\alpha = \Cov((U - \cte')\tgamma^{\alpha-1}(U), 1- \tgamma(U)) \eqsp,
$$
which, using once again that $\PE[1- \tgamma] = 0$, can be rewritten as
\begin{align*}
A_\alpha &= \frac{1}{2} \PE\left[((U-\cte')\GammaAlpha^{\alpha-1}(U) - (V-\cte')\GammaAlpha^{\alpha-1}(V))(-\GammaAlpha(U) + \GammaAlpha(V)) \right] \\
& = \frac{1}{2} \PE\left[\frac{(U-\cte')\GammaAlpha^{\alpha-1}(U) - (V-\cte')\GammaAlpha^{\alpha-1}(V)}{U - V}\frac{-\GammaAlpha(U) + \GammaAlpha(V)}{U-V} (U-V)^2 \right]\\
& \geq \frac{\ctemono}{2}\Var_\mu(\bmuf) \eqsp.
\end{align*}
Combining with \eqref{eq:bound:fondam} yields \eqref{eq:mono:refined}.
\end{enumerate}
\end{proof}

%

\subsection{Proof of \Cref{thm:admiss}}
\label{subsec:admiss}

\begin{proof}[Proof of \Cref{thm:admiss}] We prove the assertions successively.
\begin{enumerate}[label=(\roman*),wide=0pt, labelindent=\parindent]
 \item The proof of \ref{item:admiss1} simply consists in verifying that we can apply \Cref{thm:monotone}. For all $\mu \in \meas{1}(\Tset)$, \eqref{eq:admiss} holds as we have
     $$
     \mu(\GammaAlpha(\bmuf + \cte)) \leq \mu \left( \sup_{v \in \Domain} \GammaAlpha(v) \right) < \infty,
     $$
      and since at each step $n \in \nstar$, \Cref{thm:monotone} combined with $\Psif(\mu_n) < \infty$ implies that $\Psif(\mu_{n+1}) \leq \Psif(\mu_n) < \infty$, we obtain by induction that $(\Psif(\mu_n))_{n\in\nstar}$ is non-increasing.

 \item For the sake of readability, we only treat the case $\cte = 0$ in the proof of  \ref{item:admiss2}. Note that the case $\cte \neq 0$ unfolds similarly by replacing $\bmuf$ by $\bmuf + \cte$ everywhere in the proof below. Let $n \in \nstar$ and set $\Delta_n = \Psif(\mu_n) - \Psif(\mu^\star)$. We first show that
\begin{align}\label{eq:delta_s}
\Delta_n \leq \cteinf \left[ \int_\Tset \log \left(\frac{\rmd \mu_{n+1}}{\rmd \mu_n} \right)\rmd \mu^\star  + \frac{L}{2} \Var_{\mu_n}(\bmuf[\mu_n]) \ctesup \right] \eqsp.
\end{align}
The convexity of $\falpha$ implies that
\begin{align}\label{eq:rate1}
\Delta_n \leq \int_\Tset \bmuf[\mu_n](\rmd \mu_n - \rmd \mu^\star) = \int_\Tset (\mu_n(\bmuf[\mu_n]) - \bmuf[\mu_n]) \rmd \mu^\star \eqsp.
\end{align}
In addition, the concavity of $- \log \GammaAlpha$ implies that for all $u, v \in \Domain$,
$$
- \log \GammaAlpha(u) \leq - \log \GammaAlpha(v) + (- \log \GammaAlpha)'(v) (u-v) \eqsp,
$$
i.e
$$
(-\log \GammaAlpha)'(v) (v-u) \leq  \log \GammaAlpha(u) - \log \GammaAlpha(v) \eqsp.
$$
Since by assumption $- \log \GammaAlpha$ is increasing, $(- \log \GammaAlpha)'(v) > 0$ and we deduce
\begin{align}\label{eq:rate2}
v-u \leq \frac{\log \GammaAlpha(u) - \log \GammaAlpha(v)}{(-\log \GammaAlpha)'(v)}  \eqsp.
\end{align}
We can apply \eqref{eq:rate2} with $u = \bmuf[\mu_n](\theta)$ and $v = \mu_n(\bmuf[\mu_n])$ which yields
$$
\mu_n(\bmuf[\mu_n])- \bmuf[\mu_n](\theta) \leq \frac{\log \GammaAlpha(\bmuf[\mu_n](\theta)) - \log \GammaAlpha(\mu_n(\bmuf[\mu_n]))}{(- \log \GammaAlpha)'(\mu_n(\bmuf[\mu_n]))}
$$
Now integrating with respect to $\rmd \mu^\star$, we obtain
$$
\Delta_n \leq \frac{1}{(- \log \GammaAlpha)'(\mu_n(\bmuf[\mu_n]))} \int_\Tset \left[ \log \GammaAlpha(\bmuf[\mu_n]) - \log \GammaAlpha(\mu_n(\bmuf[\mu_n])) \right] \rmd \mu^\star \eqsp.
$$
By definition of $\mu^\star$, we have that $\Delta_n \geq 0$ and combining with the fact that $(- \log \GammaAlpha)'(\mu_n(\bmuf[\mu_n])) > 0$, we can deduce
$$
\int_\Tset \left[ \log \GammaAlpha(\bmuf[\mu_n]) - \log \GammaAlpha(\mu_n(\bmuf[\mu_n])) \right] \rmd \mu^\star \geq 0 \eqsp.
$$
Consequently, we obtain
\begin{align}\label{eq:rate3}
\Delta_n &\leq \cteinf \int_\Tset \left[ \log \GammaAlpha(\bmuf[\mu_n]) - \log \GammaAlpha(\mu_n(\bmuf[\mu_n])) \right] \rmd \mu^\star \\
&= \cteinf \int_\Tset \left[ \log \left(\frac{\rmd \mu_{n+1}}{\rmd \mu_n} \right) + \log \mu_n(\GammaAlpha(\bmuf[\mu_n] ))  - \log \GammaAlpha(\mu_n(\bmuf[\mu_n])) \right] \rmd \mu^\star \nonumber \\
&= \cteinf \left[ \int_\Tset \log \left(\frac{\rmd \mu_{n+1}}{\rmd \mu_n} \right)\rmd \mu^\star  + \log \mu_n(\GammaAlpha(\bmuf[\mu_n] ))   - \log \GammaAlpha(\mu_n(\bmuf[\mu_n])) \right] \nonumber \eqsp.
\end{align}
Next, we show that
$$
 \log \mu_n(\GammaAlpha(\bmuf[\mu_n] ))   - \log \GammaAlpha(\mu_n(\bmuf[\mu_n])) \leq \frac{L}{2} \Var_{\mu_n}(\bmuf[\mu_n]) \ctesup\eqsp.
$$
By assumption $\GammaAlpha$ is $L$-smooth on $\Domain$, thus for all $\theta \in \Tset$ and for all $n \in \nstar$,
\begin{multline*}
\GammaAlpha(\bmuf[\mu_n](\theta) ) \leq \GammaAlpha(\mu_n(\bmuf[\mu_n])) + \GammaAlpha'(\mu_n(\bmuf[\mu_n]) )(\bmuf[\mu_n](\theta) - \mu_n(\bmuf[\mu_n])) \\ + \frac{L}{2} \left( \bmuf[\mu_n](\theta) - \mu_n(\bmuf[\mu_n]) \right)^2
\end{multline*}
which in turn implies
$$
\mu_n(\GammaAlpha(\bmuf[\mu_n])) \leq \GammaAlpha(\mu_n(\bmuf[\mu_n])) + \frac{L}{2} \Var_{\mu_n} \left( \bmuf[\mu_n]\right) \eqsp.
$$
Finally, we obtain
$$
\log \mu_n(\GammaAlpha(\bmuf[\mu_n]) ) \leq \log \GammaAlpha(\mu_n(\bmuf[\mu_n]) ) + \log \left(1 +  \frac{L}{2} \frac{\Var_{\mu_n}( \bmuf[\mu_n]) }{\GammaAlpha(\mu_n(\bmuf[\mu_n]) )} \right) \eqsp.
$$
Using that $\log(1+u) \leq u$ when $u \geq 0$ and that $1/\GammaAlpha$ is increasing, we deduce
$$
\log \mu_n(\GammaAlpha(\bmuf[\mu_n] )) \leq \log \GammaAlpha(\mu_n(\bmuf[\mu_n])) + \frac{L}{2} \Var_{\mu_n} \left( \bmuf[\mu_n]\right) \ctesup\eqsp.
$$
which combined with \eqref{eq:rate3} implies \eqref{eq:delta_s}. To conclude, we apply \Cref{lem:mono:refined} to $g = \frac{\rmd \mu_{n+1}}{\rmd \mu_n}$ and combining with \eqref{eq:delta_s}, we obtain
$$
\Delta_n \leq \cteinf \left[ \int_\Tset \log \left(\frac{\rmd \mu_{n+1}}{\rmd \mu_n} \right)\rmd \mu^\star  + \frac{L \ctesup }{\ctemono} \left( \Delta_n - \Delta_{n+1} \right) \right] \eqsp,
$$
where by assumption $\ctemono$, $\cteinf$ and $\ctesup > 0$. As the r.h.s involves two telescopic sums, we deduce
\begin{align}\label{eq:end}
\frac{1}{N} \sum_{n=1}^{N} \Psif(\mu_n) - \Psif(\mu^\star) \leq \frac{\cteinf}{N} \bigg[ KL\couple[\mu^\star][\mu_1] - KL\couple[\mu^\star][\mu_{N + 1}] \left. + L\frac{ \ctesup}{\ctemono} (\Delta_1 - \Delta_{N+1}) \right]
\end{align}
and we recover \eqref{eq:rate} using \ref{item:admiss1}, that $KL\couple[\mu^\star][\mu_{N + 1}] \geq 0$ and that $\Delta_{N+1} \geq 0$.
\end{enumerate}
\end{proof}

\begin{rem}
Note that the convexity of the mapping $\mu \mapsto \Psif(\mu)$ in \eqref{eq:end} implies an $O(1/N)$ convergence rate for $\bar{\mu}_N = \frac{1}{N} \sum_{n=1}^{N} \mu_n$ as well:
\begin{align*}
 \Psif\lr{\bar{\mu}_N} - \Psif(\mu^\star) \leq \frac{\cteinf}{N} \left[ KL\couple[\mu^\star][\mu_1] + L\frac{ \ctesup}{\ctemono} \Delta_1 \right] \eqsp.
\end{align*}
\end{rem}

\subsection{Proof of \Cref{thm:admiss:spec}}
\label{subsec:thm:admiss:spec}

We start with a side note on $\Domain$. A typical choice for $\Domain$ is
\begin{equation}\label{eq:def:Dalpha1}
\Domain = [-\binfty + \cte, \binfty + \cte] \eqsp.
\end{equation}
However, when $\alpha \in \rset \setminus \lrcb{1}$, we might consider instead
\begin{equation}\label{eq:def:Dalpha}
\Domain = \begin{cases}
             [\frac{1}{1-\alpha} + \cte, \binfty+ \cte], & \mbox{if } \alpha > 1 \\
             [-\binfty + \cte, \frac{1}{1-\alpha}+ \cte], & \mbox{if } \alpha < 1
           \end{cases}
\end{equation}
to underline the fact that for all $v \in \Domain$, $(\alpha-1)v + 1 \geq (\alpha-1)\cte$. Unless specified otherwise, we let $\Domain$ be as in \eqref{eq:def:Dalpha} whenever $\alpha \in \rset \setminus \lrcb{1}$.

\begin{proof}[Proof of \Cref{thm:admiss:spec}] Let us recall the different conditions that must be met in order to verify that we can apply \Cref{thm:admiss} in each of the cases mentioned in \Cref{thm:admiss:spec}:

\begin{enumerate}[wide=0pt]
  \item \label{cond:well-def} $0 <\inf_{v \in \Domain} \GammaAlpha(v)$ and $\sup_{v \in \Domain} \GammaAlpha(v) < \infty$.
  \item \label{cond:mono} The function $\GammaAlpha: \Domain \to\rset_{> 0}$ is decreasing, continuously differentiable and satisfies the inequality
    \begin{align*}
        \left[(\alpha -1)(v-\cte) + 1\right] (\log {\GammaAlpha})'(v) + 1 \geq 0 \eqsp.
    \end{align*}
    \item \label{cond:smooth} We have $\ctemono = \inf_{v \in \Domain} \lrcb{ \left[(\alpha -1) (v-\cte) + 1\right] (\log \GammaAlpha)'(v) + 1 } \times $

         $\inf_{v \in \Domain}  - \GammaAlpha'(v) > 0$.
  \item \label{cond:logG} The function $\GammaAlpha: \Domain \to\rset_{> 0}$ is $L$-smooth and the function $- \log \GammaAlpha$ is concave increasing.
  \item \label{cond:pos} $\cteinf = \left(\inf_{v \in \Domain} (- \log \GammaAlpha)' (v)\right)^{-1} > 0$.
\end{enumerate}

\begin{enumerate}[label=(\roman*)]
\item Forward Kullback-Leibler divergence ($\alpha = 1)$: $\GammaAlpha(v) = e^{-\eta v}$, $\eta \in (0,1)$, any real $\cte$. Since the update formula does not depend on $\cte$, there is no constraint on $\cte$ and we assume that $\cte = 0$ for simplicity.

    - Condition \ref{cond:well-def} is satisfied since $\binfty[1]$ is finite. 

    - Condition \ref{cond:mono} is satisfied with $\GammaAlpha'(v) = -\eta e^{-\eta}$ and $(\log \GammaAlpha)'(v) = -\eta$.

    - Condition \ref{cond:smooth} is satisfied with $\ctemono[1] \geq (1-\eta)\eta e^{-\eta \binfty[1]}$.

    - Condition \ref{cond:logG} is satisfied.

    - Condition \ref{cond:pos} is satisfied with $\cteinf[1] = \frac{1}{\eta}$.

\item Reverse Kullback-Leibler ($\alpha = 0$) and $\alpha$-Divergence with $\alpha \in \rset \lrcb{0,1}$:
\begin{enumerate}[label=(\alph*),wide=0pt, labelindent=\parindent]
\item $\GammaAlpha(v) = e^{-\eta v}$, $\eta \in (0, \frac{1}{|\alpha-1| \binfty + 1})$, any real $\cte$. The only difference with the previous case lies in the inequality (i.e Condition \ref{cond:mono}), which can be rewritten for all $v \in \Domain$ as
    $$
    1 \geq \eta \left[(\alpha-1)(v-\cte) +1 \right] \eqsp,
    $$
    Since $0 \leq (\alpha-1)(v-\cte) +1 \leq |\alpha-1| \binfty + 1$, this inequality is then satisfied for $\eta \in (0, \frac{1}{|\alpha-1| \binfty + 1})$.

\item \underline{Case $\alpha > 1$.} $\GammaAlpha(v) = ((\alpha-1)v + 1)^{\frac{\eta}{1-\alpha}}$, $\eta \in (0,1]$ and $\cte$ satisfies $(\alpha-1)\cte > 0$. Then, the condition $(\alpha-1)\cte > 0$ ensures that $\GammaAlpha$ is well-defined on $\Domain$. 
    From there, we deduce:

    - Condition \ref{cond:well-def} is satisfied since $\binfty$ is finite.

    - Condition \ref{cond:mono} is satisfied: $\GammaAlpha'(v) = -\eta ((\alpha-1)v + 1)^{\frac{\eta}{1-\alpha} -1}$, $(\log \GammaAlpha)'(v) = \frac{- \eta}{(\alpha-1) v + 1} $ and the inequality can be rewritten for all $v \in \Domain$ as
    $$
    1 \geq \eta \left[1 - \frac{(\alpha-1)\cte}{(\alpha-1)v +1}\right] \eqsp,
    $$
    which is satisfied for $\eta \in (0, 1]$.

    - Condition \ref{cond:smooth} is satisfied (the condition $(\alpha -1)\cte > 0$ is of crucial importance here).

    - Condition \ref{cond:logG} is satisfied with $(- \log \GammaAlpha)''(v) = \frac{\eta(1-\alpha)}{((\alpha-1)v + 1)^2}$ (note that we need $\alpha > 1$ here).

    - Condition \ref{cond:pos} is satisfied and here again we use that $(\alpha -1)\cte > 0$.
\end{enumerate}
\end{enumerate}
\end{proof}

\subsection{Proof of \Cref{thm:repulsive}}
\label{subsec:proof-repulsive}

In this part, recall that we focus on the particular case $\alpha < 1$, $\cte \leq 0$ and $\GammaAlpha(v) = [ \left(\alpha -1 \right)v + 1]^{\eta/(1-\alpha)}$ for all $v \in \Domain$. In the following, we use the notation
$\mu_n\dlim\muf$ for the weak convergence of measures in $\meas{1}(\Tset)$. For all $\zeta \in \meas{1}(\Tset)$, for all $\theta \in \Tset$, define
$$
g_\zeta(\theta) = (\alpha-1)(\bmuf[\zeta](\theta)+ \cte) + 1 \eqsp.$$
We first derive four useful lemmas.
\begin{lemma} \label{lem:repulsiveOne} Assume \ref{hyp:positive} and \ref{hyp:compact}. Suppose that $\mu_n\dlim\muf$. Then the following assertions hold.
\begin{enumerate}[label=(\roman*)]
\item \label{item:lem1} For all $y \in \Yset$, $\mu_nk(y)$ tends to $\muf k(y)$
    as $n\to\infty$.
\item \label{item:lem2} For all $\zeta \in \meas{1}(\Tset)$, the function
  $\theta \mapsto g_\zeta(\theta)$ is continuous. Furthermore for all
  $\theta \in \Tset$, $g_{\mu_n}(\theta)$ tends to $g_{\muf}(\theta)$ as $n\to\infty$.
\item \label{item:lem2:prim} There exist $0<m_-<m_+<\infty$ such that, for all $\zeta \in \meas{1}(\Tset)$ and $\theta\in\Tset$, $g_\zeta(\theta) \in[m_-,m_+]$.
\item \label{item:lem3} For all continuous, positive and bounded function $h$,
$$\lim_{n \to \infty} \int_\Tset \mu_n(\rmd \theta) \GammaAlpha (\bmuf[\mu_n](\theta)+ \cte) h(\theta) = \int_\Tset \muf(\rmd \theta) \GammaAlpha (\bmuf[\muf](\theta)+ \cte) h(\theta) \eqsp.$$
\end{enumerate}
\end{lemma}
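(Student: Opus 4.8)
The plan is to reduce all four assertions to a single convenient representation of $g_\zeta$. Since $\alpha\neq 1$ we have $\falpha'(u)=\frac{1}{\alpha-1}(u^{\alpha-1}-1)$, and because $K$ is a Markov kernel ($\int_\Yset k(\theta,y)\nu(\rmd y)=1$), a direct computation gives
$$
g_\zeta(\theta)=\int_\Yset k(\theta,y)\left(\frac{\zeta k(y)}{p(y)}\right)^{\alpha-1}\nu(\rmd y)+(\alpha-1)\cte \eqsp,
$$
where $(\alpha-1)\cte\geq 0$ since $\alpha<1$ and $\cte\leq 0$. At the outset I would record the pointwise bound $k(\theta,y)(\zeta k(y)/p(y))^{\alpha-1}\leq \sup_{\theta}k(\theta,y)\,\sup_{\theta'}(k(\theta',y)/p(y))^{\alpha-1}$, valid for every $\zeta$ (using $\zeta k(y)\geq\inf_{\theta'}k(\theta',y)$ together with $\alpha-1<0$), whose right-hand side is $\nu$-integrable by \ref{hyp:compact}-\ref{item:theta:four}; this single domination drives every dominated-convergence step below.

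For \ref{item:lem1} I would note that $\theta\mapsto k(\theta,y)$ is continuous by \ref{hyp:compact}-\ref{item:theta:two} and bounded on the compact set $\Tset$, so $\mu_n\dlim\muf$ gives $\mu_n k(y)=\int k(\theta,y)\,\mu_n(\rmd\theta)\to\muf k(y)$. For \ref{item:lem2}, both the continuity of $g_\zeta$ and the convergence $g_{\mu_n}(\theta)\to g_{\muf}(\theta)$ follow from dominated convergence applied to the representation, using the domination above together with \ref{item:lem1}. For \ref{item:lem2:prim}, the upper bound $m_+$ is immediate from the domination, while the reverse inequality $\zeta k(y)\leq\sup_{\theta'}k(\theta',y)$ yields $g_\zeta(\theta)\geq\int_\Yset \inf_{\theta}k(\theta,y)\,\inf_{\theta'}(k(\theta',y)/p(y))^{\alpha-1}\nu(\rmd y)+(\alpha-1)\cte=:m_-$; here $m_->0$ because the integrand is strictly positive ($\inf_{\theta}k(\theta,y)>0$ as $k(\cdot,y)$ is continuous and positive on a compact set), and $0<m_-\leq m_+<\infty$ (one may enlarge $m_+$ slightly to make the inequality strict).

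The substance is \ref{item:lem3}, where the integrand $\GammaAlpha(\bmuf[\mu_n]+\cte)=g_{\mu_n}^{\beta}$, with $\beta=\eta/(1-\alpha)>0$, itself depends on $n$, so plain weak convergence does not apply; this is the main obstacle. I would split
$$
\int\mu_n(\rmd\theta)g_{\mu_n}^{\beta}h-\int\muf(\rmd\theta)g_{\muf}^{\beta}h=\int\mu_n(\rmd\theta)\big(g_{\mu_n}^{\beta}-g_{\muf}^{\beta}\big)h+\left(\int\mu_n(\rmd\theta)g_{\muf}^{\beta}h-\int\muf(\rmd\theta)g_{\muf}^{\beta}h\right)\eqsp.
$$
The second term tends to $0$ by $\mu_n\dlim\muf$, since $g_{\muf}^{\beta}h$ is continuous by \ref{item:lem2} and bounded by \ref{item:lem2:prim}. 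The first term is at most $\|h\|_\infty\sup_{\theta}|g_{\mu_n}(\theta)^{\beta}-g_{\muf}(\theta)^{\beta}|$ because $\mu_n$ is a probability measure, so it suffices to upgrade the pointwise convergence of \ref{item:lem2} to uniform convergence. To do so I would establish equicontinuity of the family $\{g_\zeta\}$: the representation gives $|g_\zeta(\theta_1)-g_\zeta(\theta_2)|\leq\int_\Yset|k(\theta_1,y)-k(\theta_2,y)|\,\sup_{\theta'}(k(\theta',y)/p(y))^{\alpha-1}\nu(\rmd y)$, a bound independent of $\zeta$; since $\theta\mapsto k(\theta,\cdot)$ is continuous into $L^1$ of the dominating measure (again by dominated convergence) and $\Tset$ is compact, this map is uniformly continuous, so the right-hand side is a modulus of continuity common to all $g_\zeta$. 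Equicontinuity together with the pointwise convergence of \ref{item:lem2} on the compact $\Tset$ then yields uniform convergence $g_{\mu_n}\to g_{\muf}$; composing with $x\mapsto x^\beta$, which is uniformly continuous on $[m_-,m_+]$ by \ref{item:lem2:prim}, gives $\sup_\theta|g_{\mu_n}^\beta-g_{\muf}^\beta|\to0$ and concludes the proof.
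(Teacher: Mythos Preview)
Your proof is correct. Parts \ref{item:lem1}--\ref{item:lem2:prim} follow the paper almost verbatim (the paper in fact writes $g_\zeta(\theta)=\int_\Yset a_\zeta(\theta,y)\nu(\rmd y)$, silently absorbing the additive constant $(\alpha-1)\cte$; your version keeps it explicit, which is cleaner). For \ref{item:lem3} you and the paper perform the same two-term split and handle the second term identically via weak convergence applied to the continuous bounded function $g_{\muf}^{\beta}h$.

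The genuine difference lies in the first term $\int_\Tset \mu_n(\rmd\theta)\big(g_{\mu_n}^{\beta}-g_{\muf}^{\beta}\big)h$. The paper uses that $u\mapsto u^\beta$ is Lipschitz on $[m_-,m_+]$ to reduce to $\int_\Tset \mu_n(\rmd\theta)\,|g_{\mu_n}(\theta)-g_{\muf}(\theta)|$, then applies Fubini to collapse the $\theta$-integration and obtain an integral in $y$ only, namely $\int_\Yset |a_n(y)|\,\nu(\rmd y)$ with $a_n(y)=\mu_n k(y)\big\{(\mu_n k(y)/p(y))^{\alpha-1}-(\muf k(y)/p(y))^{\alpha-1}\big\}$, which tends to $0$ by dominated convergence using \ref{item:lem1} and \ref{hyp:compact}-\ref{item:theta:four}. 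You instead upgrade the pointwise convergence of \ref{item:lem2} to uniform convergence by establishing equicontinuity of $\{g_\zeta:\zeta\in\meas{1}(\Tset)\}$ via the $\zeta$-free modulus $\theta\mapsto \|k(\theta,\cdot)\|_{L^1(w\,\nu)}$, $w(y)=\sup_{\theta'}(k(\theta',y)/p(y))^{\alpha-1}$, and then invoke Ascoli on the compact $\Tset$. Both routes are valid; the paper's Fubini trick is shorter and tailored to this computation, while your equicontinuity argument is more conceptual and yields the stronger intermediate statement $\sup_{\theta}|g_{\mu_n}(\theta)-g_{\muf}(\theta)|\to 0$, which could be reused elsewhere.
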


\begin{proof}
We prove the assertions successively.

  \noindent\textbf{Proof of~\ref{item:lem1}.}
For all $y \in \Yset$, the function $\theta \mapsto k(\theta,y)$ is
continuous on a compact set, hence bounded. The weak convergence
 $\mu_n\dlim\muf$ thus implies the pointwise convergence of
  $\mu_n k$ to $\muf k$.

  \noindent\textbf{Proof of~\ref{item:lem2}.}
 For all $\theta \in \Tset$ and  $\zeta \in \meas{1}(\Tset)$, we write
  $$
  g_{\zeta}(\theta)=\int_\Yset a_\zeta(\theta,y) \nu(\rmd y)\;,
  $$
  where we set for all $(\theta, y) \in \Tset \times \Yset$,
  $a_\zeta(\theta,y)=k(\theta,y) \lr{\frac{\zeta k(y)}{p(y)}}^{\alpha-1}$. The continuity of $g_{\zeta}(\theta)$ follows from
  the Dominated Convergence Theorem, since for all $y\in\Yset$, the function
  $\theta \mapsto a_\zeta(\theta,y)$ is continuous on $\Tset$ by \ref{hyp:compact}-\ref{item:theta:two} and for all
  $(\theta, y) \in \Tset \times \Yset$, we have
\begin{align}
\label{eq:repulsiveMinus1}
|a_\zeta(\theta,y)| \leq \sup_{\theta' \in \Tset}k(\theta',y) \times  \sup_{\theta'' \in \Tset} \lr{\frac{k(\theta'',y)}{p(y)}}^{\alpha-1}\;,
\end{align}
which is integrable w.r.t $\nu(\rmd y)$ by \ref{hyp:compact}-\ref{item:theta:four}.
The second part of \ref{item:lem2} is obtained similarly. Using \ref{item:lem1}
and that $u \mapsto u^{\alpha-1}$ is $C^1$, we get that, for all $(\theta, y) \in \Tset \times \Yset$,
\begin{align*}
\llim_{n \to \infty} k(\theta,y) \lr{\frac{\mu_n k(y)}{p(y)}}^{\alpha-1}
  =k(\theta,y)\lr{\frac{ \muf k(y)}{p(y)}}^{\alpha-1} \;,
\end{align*}
i.e $\llim_{n \to \infty} a_{\mu_n}(\theta,y) = a_{\muf}(\theta,y)$. The bound
\eqref{eq:repulsiveMinus1} and
\ref{hyp:compact}-\ref{item:theta:four} provide a domination criterion and we
get that $g_{\mu_n}(\theta)$ tends to $g_{\muf}(\theta)$ as
$n\to\infty$, which concludes the proof of \ref{item:lem2}.

  \noindent\textbf{Proof of~\ref{item:lem2:prim}.}
  For all $(\theta,\zeta) \in \Tset \times \meas{1}(\Tset)$, we have
  $g_{\zeta}(\theta) \in [m_-,m_+]$ where
\begin{align}
\label{eq:m-+}
m_-&\eqdef\int_\Yset \inf_{\theta' \in \Tset}k(\theta',y) \times
     \inf_{\theta'' \in \Tset}
     \lr{\frac{k(\theta'',y)}{p(y)}}^{\alpha -1} \nu(\rmd y) \;, \\
m_+& \eqdef \int_\Yset \sup_{\theta' \in \Tset}k(\theta',y) \times
     \sup_{\theta'' \in \Tset}
     \lr{\frac{k(\theta'',y)}{p(y)}}^{\alpha-1} \nu(\rmd y)  \;.\nonumber
\end{align}
We have that $m_+$ is finite by \ref{hyp:compact}-\ref{item:theta:four}. Furthermore, $u \mapsto u^{\alpha-1}$ does not vanish on
$(0,\infty)$. Together with \ref{hyp:positive}, we thus have that for any $y \in \Yset$, the functions $\theta \mapsto k(\theta,y)$ and $\theta \mapsto \lr{k(\theta,y)/p(y)}^{\alpha-1}$ are continuous and positive on the compact set $\Tset$, from which we deduce that
$m_->0$.

  \noindent\textbf{Proof of~\ref{item:lem3}.}
Using \ref{item:lem2}, the function $\theta \mapsto \GammaAlpha(\bmuf[\muf](\theta)+ \cte)
h(\theta)$ is continuous, and, since $\Tset$ is compact, $\mu_n\dlim\muf$ gives that
\begin{align}
\label{eq:repulsiveZero}
\lim_{n \to \infty} \int_\Tset \mu_n(\rmd \theta) \GammaAlpha(\bmuf[\muf](\theta)+ \cte) h(\theta) = \int_\Tset \muf(\rmd \theta) \GammaAlpha(\bmuf[\muf](\theta) + \cte) h(\theta) \eqsp.
\end{align}
Next we show that
\begin{align}
\label{eq:repulsiveOne}
\lim_{n \to \infty} \int_\Tset \mu_n(\rmd \theta) \left| \GammaAlpha(\bmuf[\mu_n](\theta)+ \cte) - \GammaAlpha(\bmuf[\muf](\theta)+ \cte) \right|  h(\theta) = 0
\end{align}
ie
\begin{align*}
\lim_{n \to \infty} \int_\Tset \mu_n(\rmd \theta) \left| g_{\mu_n}(\theta)^{\frac{\eta}{1-\alpha}} - g_{\muf}(\theta)^{\frac{\eta}{1-\alpha}} \right|  h(\theta) = 0
\end{align*}
Using~\ref{item:lem2:prim}, since $u \mapsto u^{\frac{\eta}{1-\alpha}}$ is Lipschitz on
$[m_-,m_+]$, there exists a constant $C$ such that
\begin{align*}
\mu_n \left[ \left|  g_{\mu_n}(\theta)^{\frac{\eta}{1-\alpha}} - g_{\muf}(\theta)^{\frac{\eta}{1-\alpha}} \right|  h \right] &\leq C \sup_{\theta \in \Tset} h(\theta) \int_\Tset \mu_n(\rmd \theta) \left| g_{\mu_n}(\theta) -g_{\muf}(\theta) \right|\\
 &=C \sup_{\theta \in \Tset} h(\theta)  \int_{\Yset} |a_n(y)| \nu(\rmd y)
\end{align*}
where $a_n(y) \eqdef \mu_n k(y) \lrcb{ \lr{\frac{\mu_n k(y)}{p(y)}}^{\alpha-1} - \lr{\frac{\muf k(y)}{p(y)}}^{\alpha-1}}$. Now, for all $y \in \Yset$,
\begin{align*}
|a_n(y)| &\leq 2 \ {\sup_{\theta \in \Tset}k(\theta,y)}\times \sup_{\theta' \in \Tset} \lr{\frac{k(\theta',y)}{p(y)}}^{\alpha-1} \eqsp,
\end{align*}
which is integrable w.r.t $\nu$ by
\ref{hyp:compact}-\ref{item:theta:four}. Moreover, by \ref{item:lem1}
and by continuity of $u \mapsto u^{\alpha-1}$, we have $\lim_{n \to \infty} a_n(y) = 0$, and
\eqref{eq:repulsiveOne} follows by dominated convergence. Finally, combining
\eqref{eq:repulsiveZero}, \eqref{eq:repulsiveOne} and
\begin{align*}
\mu_n \left[ \GammaAlpha(\bmuf[\mu_n](\theta)+ \cte) h \right] =& \ \mu_n \left[ \GammaAlpha(\bmuf[\mu_n](\theta)+ \cte) h - \GammaAlpha(\bmuf[\muf](\theta)+ \cte)  h \right] \\ &+ \mu_n \left[\GammaAlpha(\bmuf[\muf](\theta)+ \cte)  h \right]
\eqsp,\end{align*}
we obtain
\ref{item:lem3}, and the proof is concluded.
\end{proof}

\begin{lemma}
\label{lem:fixed:repulsive:prelim} Assume \ref{hyp:positive}. Let $\muf, \mu \in \meas{1}(\Tset)$ and assume that there exists $\mubar \in \meas{1, \mu}(\Tset)$ such that $\Psif(\mubar) < \Psif(\muf) $. Then, there exists $\delta > 1$ such that
\begin{align}\label{eq:b_mes}
\mubar(g_{\muf} > \delta \muf(g_{\muf}))>0 \eqsp.
\end{align}
\end{lemma}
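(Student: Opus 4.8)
The plan is to argue directly from the first-order convexity inequality for $\Psif$, the same one underlying \eqref{eq:rate1}. Since $\falpha$ is convex, $\falpha(b)\geq \falpha(a)+\falpha'(a)(b-a)$; taking $a=\muf k(y)/p(y)$ and $b=\mubar k(y)/p(y)$, integrating against $p(y)\nu(\rmd y)$, and applying Fubini (recognising $\int_\Yset \falpha'(\muf k(y)/p(y))\,\zeta k(y)\,\nu(\rmd y)=\zeta(\bmuf[\muf])$ for $\zeta\in\{\muf,\mubar\}$) yields
\[
\Psif(\muf)-\Psif(\mubar)\leq \muf(\bmuf[\muf])-\mubar(\bmuf[\muf])\eqsp.
\]
Crucially, no domination relation between $\muf$ and $\mubar$ is needed for this inequality, which is why I would use it rather than \Cref{lem:fondam} (whose bound requires $\zeta\preceq\mu$).

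First I would use the hypothesis $\Psif(\mubar)<\Psif(\muf)$ to make the left-hand side strictly positive, so that $\mubar(\bmuf[\muf])<\muf(\bmuf[\muf])$. Rewriting this as $\int_\Tset (\muf(\bmuf[\muf])-\bmuf[\muf](\theta))\,\mubar(\rmd\theta)>0$, a strictly positive integral forces the integrand to be positive on a set of positive $\mubar$-measure, i.e. $\mubar(\bmuf[\muf]<\muf(\bmuf[\muf]))>0$. Next I would translate this event through $g_{\muf}$: by definition $g_{\muf}(\theta)-\muf(g_{\muf})=(\alpha-1)(\bmuf[\muf](\theta)-\muf(\bmuf[\muf]))$, and since $\alpha<1$ the factor $\alpha-1$ is negative, so $\bmuf[\muf](\theta)<\muf(\bmuf[\muf])$ is equivalent to $g_{\muf}(\theta)>\muf(g_{\muf})$. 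Hence $\mubar(g_{\muf}>\muf(g_{\muf}))>0$. Finally I would upgrade the threshold: $g_{\muf}\geq 0$ gives $\muf(g_{\muf})\geq0$, and $\{g_{\muf}>\muf(g_{\muf})\}=\bigcup_{\delta>1}\{g_{\muf}>\delta\,\muf(g_{\muf})\}$ is an increasing union as $\delta\downarrow1$ (trivial when $\muf(g_{\muf})=0$); continuity of $\mubar$ along this family then produces a $\delta>1$ with $\mubar(g_{\muf}>\delta\,\muf(g_{\muf}))>0$, which is exactly \eqref{eq:b_mes}.

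The main obstacle is integrability, not the convexity core. One must make sense of both sides of the convexity inequality and guarantee that $\muf(\bmuf[\muf])$ (equivalently $\muf(g_{\muf})$) is finite, so that the localisation and the translation to $g_{\muf}$ are legitimate. The $\alpha<1$ structure helps here: $\falpha'(u)=\frac{1}{\alpha-1}(u^{\alpha-1}-1)\leq \frac{1}{1-\alpha}$, so $\bmuf[\muf]\leq \frac{1}{1-\alpha}$ is bounded above, and $g_{\muf}(\theta)=\int_\Yset k(\theta,y)(\muf k(y)/p(y))^{\alpha-1}\nu(\rmd y)+(\alpha-1)\cte$ is strictly positive (using $(\alpha-1)\cte\geq0$). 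Expressing $\muf(g_{\muf})$ in terms of $\Psif(\muf)$ shows it is finite whenever $\Psif(\muf)$ is, while $\Psif(\mubar)<\Psif(\muf)$ forces $\Psif(\mubar)<\infty$; the remaining care is to rule out or absorb the degenerate possibilities (such as $\bmuf[\muf]=-\infty$ on a $\mubar$-charged set), which the boundedness above of $\bmuf[\muf]$ and the pointwise positivity of $g_{\muf}$ control. This bookkeeping is where the proof must be careful.
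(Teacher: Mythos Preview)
Your argument is correct and follows essentially the same approach as the paper: both rest on the convexity inequality $\Psif(\muf)-\Psif(\mubar)\leq \muf(\bmuf[\muf])-\mubar(\bmuf[\muf])$, rewrite it in terms of $g_{\muf}$ using the factor $(\alpha-1)$ (relying on the section's standing assumption $\alpha<1$), and then extract a $\delta>1$. The only difference is presentational: the paper argues by contradiction (assume $\mubar(A_\delta)=0$ for every $\delta>1$, bound $\mubar[g_{\muf}-\muf(g_{\muf})]\leq(\delta-1)\muf(g_{\muf})$ and let $\delta\downarrow1$), whereas you argue directly (a strictly positive integral forces $\mubar(g_{\muf}>\muf(g_{\muf}))>0$, then continuity from below along $\delta\downarrow1$). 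Your integrability discussion is also in line with the paper's implicit use of the ambient assumptions of this subsection.
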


\begin{proof}
Let $\zeta, \zeta' \in \meas{1}(\Tset)$. Then, by convexity of $\falpha$ we have,
\begin{align*}
\int_\Tset [\zeta-\zeta'](\rmd \theta) \bmuf[\zeta'](\theta) \leq \Psif(\zeta) - \Psif(\zeta')\eqsp.
\end{align*}
that is
\begin{align}\label{eq:b_ineq}
\int_\Tset [\zeta-\zeta'](\rmd \theta) g_{\zeta'}(\theta) \geq (\alpha-1) \left( \Psif(\zeta) - \Psif(\zeta') \right)\eqsp.
\end{align}

Furthermore, for all $\delta > 1$,
$(\delta -1) \muf(g_{\muf}) \geq 0$.
Let us define $A_\delta = \{ g_{\muf} > \delta \muf(g_{\muf}) \}$ and show that
$\mubar(A_\delta)>0$ for some $\delta>1$. To do so, we proceed by
contradiction. Suppose that $\mubar(A_\delta)=0$ for all $\delta>1$, so that
$$
\mubar[ g_{\muf} - \muf(g_{\muf}) ] = \mubar[ \lr{g_{\muf} - \muf(g_{\muf})}
\indi{A_\delta^c} ] \leq (\delta -1) \muf(g_{\muf}) \;.
$$
Using \eqref{eq:b_ineq}, we get that, for all $\delta>1$,
$$
0< (\alpha-1) \left( \Psif(\mubar) - \Psif(\muf) \right) \leq \mubar[ \lr{g_{\muf} - \muf(g_{\muf})} ]
\leq (\delta -1) \muf(g_{\muf}) \;.
$$
Letting $\delta\downarrow 1$, we obtain a contradiction, which finishes the
proof.
\end{proof}

\begin{lemma}\label{lem:fixed:repulsive} Assume \ref{hyp:positive}. Let $\muf \in \meas{1}(\Tset)$ be a fixed point of
  $\iteration$ and let $\eta > 0$. Let $\mu \in \meas{1}(\Tset)$ and assume that there exists
  $\mubar \in \meas{1, \mu}(\Tset)$ such that
  $\Psif(\muf) > \Psif(\mubar)$.
Then, there exists $\delta > 1$ such that
$$
\mubar\lrcb{\GammaAlpha(\bmuf[\muf]+ \cte) > \delta \muf(\GammaAlpha(\bmuf[\muf] + \cte))}  > 0 \eqsp.
$$
\end{lemma}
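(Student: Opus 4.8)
The plan is to push the conclusion of \Cref{lem:fixed:repulsive:prelim} through the strictly increasing map $u \mapsto u^{\beta}$, where $\beta \eqdef \eta/(1-\alpha) > 0$ (here $\alpha < 1$ and $\eta > 0$), after first using the fixed-point hypothesis to pin down the normalising constant $\muf(\GammaAlpha(\bmuf[\muf] + \cte))$. The starting observation is the identity $\GammaAlpha(\bmuf[\muf](\theta) + \cte) = g_{\muf}(\theta)^{\beta}$, valid for all $\theta \in \Tset$ by the very definition of $\GammaAlpha$ and of $g_{\muf}$; note also that $\alpha < 1$ and $\cte \leq 0$ give $(\alpha-1)\cte \geq 0$, so that $g_{\muf} > 0$ under \ref{hyp:positive} and all the powers below are well-defined.

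The decisive step exploits that $\muf$ is a fixed point of $\iteration$. Writing $\iteration(\muf) = \muf$ and simplifying the density $\rmd\iteration(\muf)/\rmd\muf$ yields
$$
\GammaAlpha(\bmuf[\muf](\theta) + \cte) = \muf(\GammaAlpha(\bmuf[\muf] + \cte)) \quad \text{for } \muf\text{-a.e. } \theta \eqsp,
$$
i.e. $g_{\muf}^{\beta} = \muf(g_{\muf}^{\beta})$ $\muf$-a.e. Since $u \mapsto u^{\beta}$ is injective on $(0,\infty)$ and $g_{\muf}$ is positive, this forces $g_{\muf}$ to be $\muf$-a.e. equal to a constant $c > 0$, whence $\muf(g_{\muf}) = c$ and, crucially,
$$
\muf(\GammaAlpha(\bmuf[\muf] + \cte)) = \muf(g_{\muf}^{\beta}) = c^{\beta} = (\muf(g_{\muf}))^{\beta} \eqsp.
$$
In effect, the fixed-point property upgrades the (generally strict) Jensen comparison between $\muf(g_{\muf}^{\beta})$ and $(\muf(g_{\muf}))^{\beta}$ into an exact equality.

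With this identity available, I would apply \Cref{lem:fixed:repulsive:prelim} — whose hypotheses hold since $\Psif(\muf) > \Psif(\mubar)$ with $\mubar \in \meas{1,\mu}(\Tset)$ — to obtain $\delta_0 > 1$ with $\mubar(g_{\muf} > \delta_0 \muf(g_{\muf})) > 0$. On the event $\{g_{\muf} > \delta_0\muf(g_{\muf})\}$, applying $u \mapsto u^{\beta}$ pointwise and inserting the identity above gives
$$
\GammaAlpha(\bmuf[\muf] + \cte) = g_{\muf}^{\beta} > \delta_0^{\beta}\,(\muf(g_{\muf}))^{\beta} = \delta_0^{\beta}\,\muf(\GammaAlpha(\bmuf[\muf] + \cte)) \eqsp.
$$
Setting $\delta = \delta_0^{\beta} > 1$, this shows the event inclusion $\{g_{\muf} > \delta_0\muf(g_{\muf})\} \subseteq \{\GammaAlpha(\bmuf[\muf] + \cte) > \delta\,\muf(\GammaAlpha(\bmuf[\muf] + \cte))\}$, and taking $\mubar$-measure of both sides delivers the claim.

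The main obstacle is the middle step: without the fixed-point hypothesis, Jensen's inequality relates $\muf(g_{\muf}^{\beta})$ and $(\muf(g_{\muf}))^{\beta}$ in the unfavourable direction when $\beta > 1$ (which occurs for $\alpha$ close to $1$), so the transfer from \Cref{lem:fixed:repulsive:prelim} would break down. It is precisely the constancy of $g_{\muf}$ forced by $\iteration(\muf) = \muf$ that collapses this gap to an equality and makes the argument work uniformly over all admissible $\beta$.
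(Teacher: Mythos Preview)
Your proof is correct and follows essentially the same approach as the paper: both arguments hinge on the observation that the fixed-point property $\iteration(\muf)=\muf$ forces $g_{\muf}$ to be $\muf$-a.e.\ constant, which yields the crucial identity $\muf(g_{\muf}^{\beta})=(\muf(g_{\muf}))^{\beta}$, after which the conclusion of \Cref{lem:fixed:repulsive:prelim} is transported through the strictly increasing map $u\mapsto u^{\beta}$. The only cosmetic difference is direction: the paper rewrites the event $\{\GammaAlpha(\bmuf[\muf]+\cte)>\delta\,\muf(\GammaAlpha(\bmuf[\muf]+\cte))\}$ as $\{g_{\muf}>\delta^{1/\beta}\muf(g_{\muf})\}$ and then invokes the preliminary lemma, whereas you start from that lemma and push forward to obtain an inclusion of events; both are equivalent.
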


\begin{proof} Note that \eqref{eq:admiss} holds for any $\eta > 0$ and $\zeta$ (in
  particular $\zeta=\muf$) by \Cref{lem:repulsiveOne}-\ref{item:lem2:prim}. As
  $\muf$ is a fixed point of $\iteration$, $\gmuf[\muf]$ is $\muf$-almost all
  constant. Consequently, $\muf(g_\muf)^{{\eta}/{1-\alpha}} = \muf(g_\muf^{{\eta}/{1-\alpha}}) = \muf(\GammaAlpha(\bmuf[\muf]+ \cte))$. For all $\delta > 1$, $\delta' \eqdef \delta^{(1-\alpha)/\eta} > 1$ and
\begin{align*}
\mubar\lrcb{\GammaAlpha(\bmuf[\muf]+ \cte) > \delta \muf(\GammaAlpha(\bmuf[\muf] + \cte))} &= \mubar\lrcb{g_\muf > \delta^{(1-\alpha)/\eta} [\muf(g_\muf^{\eta/(1-\alpha)})]^{(1-\alpha)/\eta}} \\
&= \mubar(g_\muf > \delta' \muf(g_\muf) ) \;.
\end{align*}
We conclude by applying \Cref{lem:fixed:repulsive:prelim}.
\end{proof}

\begin{lemma}
  \label{lem:find-B} Assume \ref{hyp:positive} and
  \ref{hyp:compact}.  Let $\eta > 0$, let $\mu_1 \in
  \meas{1}(\Tset)$ and define
  the sequence $(\mu_n)_{n\in\nstar}$ according to \eqref{eq:def:mu}. Suppose
  that $\mu_n\dlim\muf$ for some fixed point  $\muf \in \meas{1}(\Tset)$  of
  $\iteration$.  Further assume there exists
  $\mubar \in \meas{1, \mu_1}(\Tset)$ such that
  $\Psif(\muf) > \Psif(\mubar)$. Then, there exist $\delta >1$ and
  $n \in \nset^*$ such that
$$
\mubar \lr{\bigcap_{m \geq n} \lrcb{\GammaAlpha(\bmuf[\mu_m] + \cte)> \delta \mu_m(\GammaAlpha(\bmuf[\mu_m] + \cte))} }  > 0 \eqsp.
$$
\end{lemma}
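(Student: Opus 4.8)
The plan is to upgrade the single-measure repulsion statement of \Cref{lem:fixed:repulsive}, which concerns the weak limit $\muf$, into a statement valid simultaneously for all large indices $m$ on one common set of positive $\mubar$-measure, using the pointwise convergence supplied by \Cref{lem:repulsiveOne}. To lighten notation, write $\phi_m(\theta) = \GammaAlpha(\bmuf[\mu_m](\theta)+\cte)$, $\phi_\infty(\theta) = \GammaAlpha(\bmuf[\muf](\theta)+\cte)$, $c_m = \mu_m(\GammaAlpha(\bmuf[\mu_m]+\cte))$ and $c_\infty = \muf(\GammaAlpha(\bmuf[\muf]+\cte))$, so that the target set reads $\bigcap_{m \geq n}\{\phi_m > \delta c_m\}$.

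First I would invoke \Cref{lem:fixed:repulsive} with $\mu = \mu_1$: its hypotheses hold by assumption, and it produces some $\delta_0 > 1$ for which $A := \{\phi_\infty > \delta_0 c_\infty\}$ satisfies $\mubar(A) > 0$. I would then gather the limiting information from \Cref{lem:repulsiveOne}: part \ref{item:lem2}, combined with the continuity of $u \mapsto u^{\eta/(1-\alpha)}$, gives $\phi_m(\theta) \to \phi_\infty(\theta)$ for every $\theta \in \Tset$; part \ref{item:lem3} with $h \equiv 1$ gives $c_m \to c_\infty$; and part \ref{item:lem2:prim} yields $m_-^{\eta/(1-\alpha)} \leq \phi_m, \phi_\infty \leq m_+^{\eta/(1-\alpha)}$, so that all the $c_m$ and $c_\infty$ lie in $[m_-^{\eta/(1-\alpha)}, m_+^{\eta/(1-\alpha)}]$ and are in particular bounded away from zero.

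Fixing any $\delta \in (1, \delta_0)$ and writing $C_m = \{\phi_m > \delta c_m\}$, the crux of the argument is the inclusion $A \subseteq \liminf_m C_m$. Indeed, for a fixed $\theta \in A$ the scalar convergences above give $\phi_m(\theta)/c_m \to \phi_\infty(\theta)/c_\infty > \delta_0 > \delta$, where the ratios are legitimate because $c_m$ is bounded below by $m_-^{\eta/(1-\alpha)} > 0$; hence $\theta \in C_m$ for all $m$ large enough (with a threshold that may depend on $\theta$), i.e. $\theta \in \liminf_m C_m$. This is precisely the step that converts pointwise convergence into eventual membership, and it is the reason no uniform-convergence or smoothness input is needed. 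It follows at once that $\mubar(\liminf_m C_m) \geq \mubar(A) > 0$.

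To finish, I would write $\liminf_m C_m = \bigcup_{n \geq 1} D_n$ with $D_n = \bigcap_{m \geq n} C_m$ an increasing sequence of measurable sets, and apply continuity of $\mubar$ from below to get $\lim_n \mubar(D_n) = \mubar(\liminf_m C_m) > 0$; consequently $\mubar(D_n) > 0$ for some $n$, which is exactly the asserted inequality. The one delicate point is that the strict inequality defining $A$ must survive the passage to the limit of the ratios $\phi_m/c_m$, and this is guaranteed by the strict gap $\delta < \delta_0$ together with the uniform positivity of the denominators coming from $m_- > 0$ in \Cref{lem:repulsiveOne}-\ref{item:lem2:prim}.
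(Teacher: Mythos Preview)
Your proof is correct and follows essentially the same route as the paper: both combine the pointwise convergence $\phi_m(\theta)/c_m \to \phi_\infty(\theta)/c_\infty$ from \Cref{lem:repulsiveOne} with continuity of $\mubar$ from below, and then appeal to \Cref{lem:fixed:repulsive}. The only cosmetic difference is that the paper invokes \Cref{lem:fixed:repulsive} at the end and uses the same $\delta$ throughout (since the limit exists, the $\liminf$ set equals $\{\phi_\infty/c_\infty>\delta\}$ exactly), whereas you call it first and introduce an auxiliary $\delta<\delta_0$; your extra margin is harmless but unnecessary.
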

\begin{proof}
    First note that the sequence $(\mu_n)_{n\in\nstar}$ is well-defined for any
    $\eta > 0$ by \Cref{lem:repulsiveOne}-\ref{item:lem2:prim}, which implies $\mu_n(\GammaAlpha(\bmuf[\mu_n]+ \cte))>0$ for all $n \in \nstar$. For all $\zeta \in \meas{1}(\Tset)$, set $h_\zeta(\theta) = \GammaAlpha(\bmuf[\zeta](\theta) + \cte)$. We further have that
  \begin{align*}
    \lim_{n\to\infty}\mubar \lr{\bigcap_{m \geq n} \lrcb{h_{\mu_m}>
    \delta \mu_m(h_{\mu_m})} }
    &= \mubar \lr{\bigcup_{n\geq1}\bigcap_{m \geq n} \lrcb{h_{\mu_m}>
      \delta \mu_m(h_{\mu_m})} } \\
&=  \mubar \lr{\set{\theta\in\Tset}{
  \liminf_{n\to\infty}\frac{h_{\mu_n}(\theta)}{\mu_n(h_{\mu_n})}>\delta }}
  \;.
  \end{align*}
  Furthermore, applying \ref{item:lem2} and \ref{item:lem3} in
  \Cref{lem:repulsiveOne}, we have, for all $\theta \in \Tset$,
  $\lim_{n \to \infty} h_{\mu_n}(\theta)=h_{\muf}(\theta)$ and
  $\lim_{n\to\infty}\mu_n(h_{\mu_n})=\muf(h_{\muf})$. Hence, for all
  $\theta \in \Tset$,
$$
\lliminf_{n\to \infty} \frac{h_{\mu_n}(\theta)}{\mu_n(h_{\mu_n})} = \frac{h_{\muf}(\theta)}{\muf(h_{\muf})}\eqsp.
$$
The proof is concluded by applying \Cref{lem:fixed:repulsive}.
\end{proof}

\begin{proof}[Proof of \Cref{thm:repulsive}] Assume \ref{hyp:positive} and \ref{hyp:compact}.

\Cref{lem:repulsiveOne}-\ref{item:lem2:prim} is exactly the first result we want to obtain, that is: for all $\zeta \in \meas{1}(\Tset)$, any $\eta > 0$ satisfies \eqref{eq:admiss} for $\zeta$. Furthermore, $|\Psif(\zeta)| < \infty$ by \ref{hyp:compact}-\ref{item:theta:four}.

Assume that $(\mu_n)_{n\in\nstar}$ weakly converges to $\muf \in
\meas{1}(\Tset)$. First note that \Cref{lem:repulsiveOne}-\ref{item:lem2:prim}
implies that for any $\eta > 0$ the sequence $(\mu_n)_{n\in\nstar}$ is
well-defined and  $\muf$ satisfies \eqref{eq:admiss}. Using \Cref{thm:monotone}, we obtain that the sequence $(\mu_n)_{n\in\nstar}$ is decreasing for all $\eta \in (0,1]$, which gives Assertion \ref{item:rep1bis}.

We now prove Assertions~\ref{item:rep1} and~\ref{item:rep2} successively.

\noindent\textbf{Proof of~\ref{item:rep1}.}
 For all $\zeta \in \meas{1}(\Tset)$ and all $y \in \Yset$, set $a_\zeta(y) =
 \falpha\left(\frac{\zeta k(y)}{p(y)}\right) p(y)$, leading to
\begin{equation} \label{eq:psi:a}
\Psif(\zeta)=\int_{\Yset} a_\zeta(y) \nu(\rmd y) \eqsp.
\end{equation}
Then, for all $y \in \Yset$,
\begin{align}
\label{eq:repulsivite:psi:int}
|a_{\zeta}(y)| \leq  \sup_{\theta \in \Tset} \left| \falpha \left(\frac{k(\theta, y)}{p(y)}\right) \right|  p(y)\eqsp,
\end{align}
which is integrable w.r.t $\nu(\rmd y)$ by \ref{hyp:compact}-\ref{item:theta:four}. Furthermore, recall that for all $y \in \Yset$,
$$
[\iteration(\mu_n)k](y) = \frac{\int_\Tset \mu_n(\rmd \theta) \GammaAlpha(\bmuf[\mu_n](\theta) + \cte) k(\theta, y)}{\int_\Tset \mu_n(\rmd \theta) \GammaAlpha(\bmuf[\mu_n](\theta)+ \cte)}\eqsp.
$$
By applying twice \Cref{lem:repulsiveOne}-\ref{item:lem3} with $h(\theta)=1$ and $h(\theta)=k(\theta,y)$, we have that for all $y \in \Yset$,
\begin{align}
\label{eq:repulsivite:iteration}
\llim_{n \to \infty} [\iteration(\mu_n)k](y) = [\iteration(\muf)k](y) \eqsp.
\end{align}

Now, since $\falpha$ is $C^1$, we obtain from \Cref{lem:repulsiveOne}-\ref{item:lem1} and \eqref{eq:repulsivite:iteration} respectively that for all $y \in \Yset$,
$\lim_{n \to \infty} a_{\mu_n}(y) = a_{\muf}(y)$ and $\lim_{n \to \infty} a_{\iteration(\mu_n)}(y) = a_{\iteration(\muf)}(y)$. Combining with \eqref{eq:repulsivite:psi:int} and \eqref{eq:psi:a} we can thus apply the Dominated Convergence Theorem to obtain
\begin{align}
\label{eq:repulsivite:psi}
\llim_{n \to \infty} \Psif(\mu_n) = \Psif(\muf)
\end{align}
and
\begin{align}
\label{eq:repulsivite:psi:iter}
\llim_{n \to \infty} \Psif(\mu_{n+1}) = \llim_{n \to \infty} \Psif(\iteration(\mu_{n})) = \Psif(\iteration(\muf)) \eqsp.
\end{align}

Finally, \eqref{eq:repulsivite:psi} and \eqref{eq:repulsivite:psi:iter} together yield $\Psif (\muf)  = \Psif \circ \iteration(\muf)$, which in turn implies that $\muf$ is a fixed point of $\iteration$ according to \Cref{thm:monotone}-(ii). 

\noindent\textbf{Proof of~\ref{item:rep2}.}
We prove~\ref{item:rep2} by contradiction. Suppose that $\mu_n\dlim\muf$, where
$\muf$  is a fixed point of $\iteration$ that satisfies
$$
\Psif(\muf)>\inf_{\zeta \in \meas{1,\mu_1}(\Tset)} \Psif(\zeta)\eqsp.
$$
Then, there exists $\mubar \in \meas{1, \mu_1}(\Tset)$ such that $\Psif(\muf) > \Psif(\mubar)$. Now for all $n\in\nstar$, set
$$
B_n = \set{\theta \in \Tset}{\bigcap_{m \geq n} \lrcb{h_{\mu_m}(\theta)> \delta \mu_m(h_{\mu_m})} }\eqsp,
$$
where for all $\zeta \in \meas{1}(\Tset)$, for all $\theta \in \Tset$,
$h_{\zeta}(\theta) \eqdef \GammaAlpha(\bmuf[\zeta](\theta) + \cte)$.  There exists, according to
\Cref{lem:find-B}, for a well chosen $\delta>1$ and a sufficiently large $n_0$
such that $\mubar(B_{n_0})>0$.

Furthermore $\mubar \approx \mu_1$ by definition, where
$\zeta \thickapprox \mu_1$ if and only if for all $A \in \Tsigma$: $\zeta(A)>0$
is equivalent to $\mu_1(A)>0$.  Since $0 < \GammaAlpha(\bmuf[\mu_1](\theta)+ \cte) < \infty$
for $\mu_1$-almost all $\theta \in \Tset$ and
$\frac{\rmd \mu_2}{\rmd \mu_1}\propto \GammaAlpha(\bmuf[\mu_1]+ \cte)$, we also have
$\mu_2 \thickapprox \mu_1$. Then by induction, $\mu_n \thickapprox \mu_1$ for all
$n\in \nstar$. Finally, $\mu_{n_0}(B_{n_0}) > 0$. Moreover, for all
$\theta \in B_{n_0}$ and all $m > n_0$,
$\frac{h_{\mu_m}(\theta)}{\mu_m(h_{\mu_m})}> \delta$ and consequently
\begin{align*}
\mu_m(B_{n_0})=\int_{B_{n_0}} \mu_{m-1}(\rmd \theta)\frac{h_{\mu_{m-1}}(\theta)}{\mu_{m-1}(h_{\mu_{m-1}})}
\geq \delta \mu_{m-1}(B_{n_0})\;.
\end{align*}
By induction on $m$ we get that, for all $m\geq n$,
$\mu_m(B_{n_0})\geq\delta^{m-n_0}\mu_{n_0}(B_{n_0})$. This contradicts the
previously obtain facts that $\delta>1$ and $\mu_{n_0}(B_{n_0})>0$. Therefore
we get a contradiction and the proof is concluded.
\end{proof}

\section{}

\subsection{Adapting \Cref{thm:admiss:spec} in the Stochastic case} \label{sec:adaptSto}

Here, we want to adapt \Cref{thm:admiss:spec} to the Stochastic case for the Entropic Mirror Descent. Given $\hmu_1\in\meas{1}(\Tset)$ with $\Psif(\hmu_1)<\infty$ and letting $(\hmu_n)_{n\in \nstar}$ be defined by \eqref{eq:def:mu:sto}, we will need the following additionnal assumption on the sequence of iterates $(\mu_n)_{n\in \nstar}$, which controls the difference $|\bmufk[\hmu_n](\theta) - \bmuf[\hmu_n](\theta)|$ uniformly with respect to $\theta$.
\begin{hyp}{A}
\item \label{hyp:SupThetaMean} Assume that there exists $\sigma > 0$ such that for all $n \in \nstar$,
$$
\PE \left[ \sup_{\theta \in \Tset}\left|\bmufk[\hmu_n](\theta) - \bmuf[\hmu_n](\theta) \right| \right] \leq \frac{\sigma}{\sqrt{M}} \eqsp.
$$
\end{hyp}
Before stating the result, let us first comment on the validity of this assumption.

\paragraph{Validity of Assumption \ref{hyp:SupThetaMean}, an example} Set $g_n(\theta, y) = \frac{k(\theta, y)}{\hmu_n k(y)} \falpha'\left( \frac{\hmu_n k(y)}{p(y)}\right)$ for all $\theta \in \Tset$, all $n \in \nstar$ and all $y \in \Yset$. In the particular case of the Simplex framework (see \Cref{ex:SimplexFramework}), which is the case we use in practice, \ref{hyp:SupThetaMean} holds with $\sigma = 2 C\sqrt{\frac{\log(2 J)}{2}}$, where $C$ is a positive constant satisfying $|g_n(\theta_j, Y_{m,n+1})| \leq C$ almost-surely for all $j = 1 \ldots J$, all $n \in \nstar$ and all $m = 1 \ldots M$.
\begin{proof} For all $u > 0$, we have by Jensen's inequality that
\begin{align}\label{eq:maxPE}
e^{u \PE \left[ \max_{1 \leq j \leq J} M \left|\bmufk[\hmu_n](\theta_j)  - \bmuf[\hmu_n](\theta_j) \right| \right]} \leq \PE \left[ e^{u \max_{1 \leq j \leq J} M \left|\bmufk[\hmu_n](\theta_j) - \bmuf[\hmu_n](\theta_j) \right|} \right]
\end{align}
Furthermore, Hoeffding's lemma implies
$$
\PE \left[ e^{u \lrcb{ g_n(\theta_j, Y_{m,n+1}) - \bmuf[\hmu_n](\theta_j)} } \right] \leq e^{\frac{u^2 C^2}{2}}
$$ 
and consequently
\begin{align*}
\PE &\left[ e^{u M \lrcb{ \bmufk[\hmu_n](\theta_j) - \bmuf[\hmu_n](\theta_j)} } \right] \leq  e^{\frac{M u^2 C^2}{2}} \eqsp.
\end{align*}
Similarly, we have
\begin{align*}
\PE &\left[ e^{- u M \lrcb{ \bmufk[\hmu_n](\theta_j) - \bmuf[\hmu_n](\theta_j)} } \right] \leq e^{ \frac{M u^2 C^2}{2}} \eqsp.
\end{align*}
which implies
\begin{align*}
\PE \left[e^{u  M \left|\bmufk[\hmu_n](\theta_j)  - \bmuf[\hmu_n](\theta_j) \right| }\right] \leq 2 e^{ \frac{M u^2 C^2}{2} } \eqsp.
\end{align*}
Then, combining with \eqref{eq:maxPE}, we have
\begin{align*}
e^{u  \PE \left[ \max_{1 \leq j \leq J} M \left|\bmufk[\hmu_n](\theta_j)  - \bmuf[\hmu_n](\theta_j) \right| \right]} \leq 2 J e^{\frac{M u^2 C^2}{2}}
\end{align*}
and we obtain
\begin{align*}
\PE \left[ \max_{1 \leq j \leq J} M \left|\bmufk[\hmu_n](\theta_j)  - \bmuf[\hmu_n](\theta_j) \right| \right] \leq \frac{\log(2J)}{u} + \frac{M u C^2}{2} \eqsp.
\end{align*}
Setting $u = \sqrt{\frac{2 \log (2J)}{M C^2}}$ yields the desired result, as we have
\begin{align*}
\PE \left[ \max_{1 \leq j \leq J} \left|\bmufk[\hmu_n](\theta_j)  - \bmuf[\hmu_n](\theta_j) \right| \right] \leq 2 C \sqrt{\frac{\log(2J)}{2 M}}
\end{align*}
\end{proof}
We now state in the next Theorem an $O(1/\sqrt{N} + O(1/\sqrt{M}))$ bound on $\PE[\Psif(\hmu_n) - \Psif(\mu^\star)]$ in the particular case of the Stochastic Entropic Mirror Descent.

\begin{thm} \label{thm:admiss:spec:sto} Assume \ref{hyp:positive}. Let $\hmu_1\in\meas{1}(\Tset)$ be such that $\Psif(\hmu_1)<\infty$, let $(\hmu_n)_{n\in \nstar}$ be defined by \eqref{eq:def:mu:sto} and assume that \ref{hyp:SupThetaMean} holds. Further assume that $\hbinfty \eqdef \sup_{n \in \nstar, \theta \in \Tset} |\bmufk[\hmu_n](\theta)| < \infty$ and let $\GammaAlpha(v) = e^{-\eta v}$. Finally, let $(\eta, \cte)$ belong to any of the following cases.
\begin{enumerate}[label=(\roman*)]
\item Forward Kullback-Leibler divergence ($\alpha = 1)$: $\eta \in (0,1)$ and $\cte$ is any real number;

\item  Reverse Kullback-Leibler ($\alpha = 0$) and $\alpha$-Divergence with $\alpha \in \rset \setminus \lrcb{0,1}$:
 $\eta \in (0,\frac{1}{|\alpha-1| \hbinfty + 1})$ and $\cte$ is any real number;
\end{enumerate}
Then, the sequence $(\hmu_n)_{n\in\nstar}$ is well-defined and for all $N \in \nstar$, we have
\begin{align*}
 \PE\left[\Psif\lr{\frac{1}{N} \sum_{n=1}^{N} \hmu_n} - \Psif(\mu^\star) \right] \leq  \frac{1}{N \eta} \lrb{  KL\couple[\mu^\star][\mu_1] + L\frac{ \ctesup}{\ctemono} \Delta_1} + \frac{1}{\sqrt{M}} \frac{L \ctesup L_{\alpha,4} \sigma}{\eta \ctemono}  \eqsp,
\end{align*}
where $\mu^\star$ is such that $\Psif(\mu^\star) = \inf_{\zeta \in \meas{1, \hmu_1}(\Tset)} \Psif(\zeta)$ and where we have defined $\Delta_1 = \Psif(\hmu_1) - \Psif(\mu^\star)$, $KL\couple[\mu^\star][\hmu_1] = \int_\Tset \log \left(\frac{\rmd\mu^\star}{\rmd\hmu_1} \right) \rmd\mu^\star$ and $L_{\alpha,4} \eqdef \sup_{v \in \Domain} \GammaAlpha(v)^{\alpha-1} \sup_{v\in \Domain} \GammaAlpha(v)^{1-\alpha} \left[ 1 + \frac{\sup_{v\in \Domain} \GammaAlpha(v)}{\inf_{v\in \Domain} \GammaAlpha(v)}\right]$.
\end{thm}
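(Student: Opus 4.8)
The plan is to retrace the proof of \Cref{thm:admiss} in the stochastic regime, viewing the Monte Carlo gradient $\bmufk[\hmu_n]$ as a perturbation of the exact gradient $\bmuf[\hmu_n]$ and picking up, at each iteration, an additive error governed by $\sup_{\theta \in \Tset}\lrav{\bmufk[\hmu_n](\theta) - \bmuf[\hmu_n](\theta)}$. I would first dispatch well-posedness: since $\hbinfty < \infty$ and $\GammaAlpha(v) = e^{-\eta v}$ is continuous, $\GammaAlpha(\bmufk[\hmu_n] + \cte)$ is uniformly bounded, so \eqref{eq:admiss} holds at every step, $(\hmu_n)_{n\in\nstar}$ is well-defined, and each $\hmu_n$ lies in $\meas{1, \hmu_1}(\Tset)$, making the comparison with $\mu^\star$ legitimate.

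For the one-step estimate, write $\Delta_n = \Psif(\hmu_n) - \Psif(\mu^\star)$ and begin, as in \Cref{thm:admiss}, from the convexity bound $\Delta_n \leq [\hmu_n - \mu^\star](\bmuf[\hmu_n])$, which involves the \emph{true} gradient. The decisive simplification for the Entropic Mirror Descent is that $-\log\GammaAlpha$ is affine, so inequality \eqref{eq:rate2} holds with equality and $\cteinf = 1/\eta$; this rewrites $[\hmu_n - \mu^\star](\bmuf[\hmu_n])$ exactly in terms of $\log\GammaAlpha(\bmuf[\hmu_n] + \cte)$. Because the update actually uses the estimated gradient, I substitute $\log\GammaAlpha(\bmuf[\hmu_n] + \cte) = \log\GammaAlpha(\bmufk[\hmu_n] + \cte) - \eta\,(\bmuf[\hmu_n] - \bmufk[\hmu_n])$, so that $\log(\rmd\hmu_{n+1}/\rmd\hmu_n) = \log\GammaAlpha(\bmufk[\hmu_n] + \cte) - \log\hmu_n(\GammaAlpha(\bmufk[\hmu_n] + \cte))$ surfaces in the leading term. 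Mirroring \eqref{eq:rate3}, this produces $\cteinf\int_\Tset \log(\rmd\hmu_{n+1}/\rmd\hmu_n)\,\rmd\mu^\star$, which telescopes to $KL\couple[\mu^\star][\hmu_n] - KL\couple[\mu^\star][\hmu_{n+1}]$; a Jensen-gap term $\log\hmu_n(\GammaAlpha(\bmufk[\hmu_n] + \cte)) - \log\GammaAlpha(\hmu_n(\bmuf[\hmu_n]) + \cte)$, controlled via $L$-smoothness by $\tfrac{L}{2}\ctesup\,\Var_{\hmu_n}(\bmuf[\hmu_n])$ up to a perturbation error; and genuine errors such as $\mu^\star(\bmuf[\hmu_n] - \bmufk[\hmu_n])$, all dominated by $\sup_\theta\lrav{\bmufk[\hmu_n](\theta) - \bmuf[\hmu_n](\theta)}$ times $\GammaAlpha$-dependent factors.

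The remaining variance term is treated with \Cref{lem:mono:refined} applied to $\hmu_n$, which gives $\tfrac{\ctemono}{2}\Var_{\hmu_n}(\bmuf[\hmu_n]) \leq \Psif(\hmu_n) - \Psif(\iteration(\hmu_n))$. As the realized iterate is $\hmu_{n+1} = \iterationK(\hmu_n)$ and not $\iteration(\hmu_n)$, I trade $\Psif(\iteration(\hmu_n))$ for $\Psif(\hmu_{n+1})$ at the price of the functional mismatch $\Psif(\iterationK(\hmu_n)) - \Psif(\iteration(\hmu_n))$. I would bound this last quantity using convexity of $\Psif$ together with a perturbation estimate comparing the reweighting densities $\GammaAlpha(\bmufk[\hmu_n] + \cte)/\hmu_n(\GammaAlpha(\bmufk[\hmu_n] + \cte))$ and $\GammaAlpha(\bmuf[\hmu_n] + \cte)/\hmu_n(\GammaAlpha(\bmuf[\hmu_n] + \cte))$; this is exactly where the condition-number factor $\sup_{v}\GammaAlpha(v)/\inf_{v}\GammaAlpha(v)$ and the factors $\sup_{v}\GammaAlpha(v)^{\alpha-1}\sup_{v}\GammaAlpha(v)^{1-\alpha}$ constituting $L_{\alpha,4}$ enter. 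Collecting the pieces yields, for each $n$, an inequality combining $\cteinf$ times the telescoping KL increment, the term $\tfrac{\cteinf L\ctesup}{\ctemono}(\Delta_n - \Delta_{n+1})$, and an error bounded by $\tfrac{L\ctesup L_{\alpha,4}}{\eta\ctemono}\sup_\theta\lrav{\bmufk[\hmu_n](\theta) - \bmuf[\hmu_n](\theta)}$.

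To finish, I would take expectations conditionally on $\mathcal F_n$ — under which $\hmu_n$ is measurable and frozen, so \ref{hyp:SupThetaMean} delivers $\PE[\sup_\theta\lrav{\bmufk[\hmu_n](\theta) - \bmuf[\hmu_n](\theta)}] \leq \sigma/\sqrt{M}$ uniformly in $n$ — then sum over $n = 1, \ldots, N$. The KL increments telescope to $KL\couple[\mu^\star][\hmu_1]$ and the $\Delta_n - \Delta_{n+1}$ increments to $\Delta_1$ (the leftover $KL\couple[\mu^\star][\hmu_{N+1}]$ and $\Delta_{N+1}$ being nonnegative and discarded), while the $N$ error terms contribute at most $N \cdot \tfrac{L\ctesup L_{\alpha,4}}{\eta\ctemono}\cdot\tfrac{\sigma}{\sqrt M}$. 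Dividing by $N$ and invoking convexity of $\mu \mapsto \Psif(\mu)$ to write $\Psif\lr{\tfrac1N\sum_{n=1}^N \hmu_n} \leq \tfrac1N\sum_{n=1}^N \Psif(\hmu_n)$ then gives the announced bound. The principal obstacle is the error bookkeeping: isolating every place where $\bmufk[\hmu_n]$ replaces $\bmuf[\hmu_n]$ and showing the accumulated discrepancy is uniformly dominated by $\sup_\theta\lrav{\bmufk[\hmu_n] - \bmuf[\hmu_n]}$ with the explicit constant $L_{\alpha,4}$; in particular, the least routine step is estimating the functional mismatch $\Psif(\iterationK(\hmu_n)) - \Psif(\iteration(\hmu_n))$ between the stochastic and exact iterations.
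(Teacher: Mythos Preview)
Your strategy is sound and would ultimately work, but it takes a noticeably harder road than the paper's. Two simplifications in the paper's argument eliminate precisely the ``error bookkeeping'' you flag as the main obstacle.

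First, instead of carrying the true gradient and substituting $\log\GammaAlpha(\bmuf[\hmu_n]+\cte)=\log\GammaAlpha(\bmufk[\hmu_n]+\cte)-\eta(\bmuf[\hmu_n]-\bmufk[\hmu_n])$, the paper takes expectations \emph{before} rewriting anything and uses unbiasedness: since $\PE[\bmufk[\hmu_n]\mid\mathcal F_n]=\bmuf[\hmu_n]$, one has $\PE\big[[\hmu_n-\mu^\star](\bmuf[\hmu_n])\big]=\PE\big[[\hmu_n-\mu^\star](\bmufk[\hmu_n])\big]$. This makes the ``genuine error'' $\mu^\star(\bmuf[\hmu_n]-\bmufk[\hmu_n])$ vanish for free, and the entire derivation of \Cref{thm:admiss} (equations \eqref{eq:rate1}--\eqref{eq:rate3} and the Jensen-gap bound) is then rerun verbatim with $\bmufk[\hmu_n]$ in place of $\bmuf[\hmu_n]$, yielding directly $\PE[\Delta_n]\leq\eta^{-1}\PE\big[\int\log(\rmd\hmu_{n+1}/\rmd\hmu_n)\,\rmd\mu^\star+\tfrac{L\ctesup}{2}\Var_{\hmu_n}(\bmufk[\hmu_n])\big]$ with no perturbation terms at this stage.

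Second, the paper never compares $\Psif(\iterationK(\hmu_n))$ with $\Psif(\iteration(\hmu_n))$. Instead it proves a stochastic analogue of \Cref{lem:mono:refined} (\Cref{lem:mono:refined:sto}) by applying \Cref{lem:fondam} directly to $\zeta=\hmu_{n+1}=\iterationK(\hmu_n)$: the quantity $A_\alpha$ then mixes $\bmuf[\hmu_n]$ (from the definition of $A_\alpha$) with $\hat g_n\propto\GammaAlpha(\bmufk[\hmu_n]+\cte)$ (from the realized update), and the single correction $E_\alpha=\hmu_n\big([\bmufk[\hmu_n]-\bmuf[\hmu_n]]\,\hat g_n^{\alpha-1}(1-\hat g_n)\big)$ is bounded in expectation by $L_{\alpha,4}\,\sigma/\sqrt{M}$ via \ref{hyp:SupThetaMean}. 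This is where $L_{\alpha,4}$ enters---as a uniform bound on $\hat g_n^{\alpha-1}|1-\hat g_n|$---rather than through a density-ratio comparison between $\iterationK(\hmu_n)$ and $\iteration(\hmu_n)$. The upshot is $\tfrac{\ctemono}{2}\PE[\Var_{\hmu_n}(\bmufk[\hmu_n])]\leq\PE[\Psif(\hmu_n)-\Psif(\hmu_{n+1})]+L_{\alpha,4}\sigma/\sqrt{M}$, which plugs straight into the previous display and telescopes. Your route, by contrast, first bounds $\Var_{\hmu_n}(\bmuf[\hmu_n])$ via the exact \Cref{lem:mono:refined} and must then absorb the functional mismatch $\Psif(\iterationK(\hmu_n))-\Psif(\iteration(\hmu_n))$; this is doable but is exactly the step the paper's organization sidesteps.
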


The first step to prove this result is to see what becomes of \Cref{lem:mono:refined} in the Stochastic framework, which we investigate in \Cref{lem:mono:refined:sto} below.
\begin{lemma}\label{lem:mono:refined:sto} Assume \ref{hyp:positive} and \ref{hyp:gamma}. Let $\hmu_1\in\meas{1}(\Tset)$ be such that $\Psif(\hmu_1)<\infty$, let $(\hmu_n)_{n\in \nstar}$ be defined by \eqref{eq:def:mu:sto} and assume that \ref{hyp:SupThetaMean} holds. Further assume that
$L_{\alpha, 4} < \infty$. Then, for all $n \in \nstar$,
\begin{align}\label{eq:mono:refined:sto}
 \frac{\ctemono}{2} \PE \left[ \Var_{\hmu_n} \left( \bmufk[\hmu_n] \right) \right] \leq \PE \left[ \Psif(\hmu_n) - \Psif(\hmu_{n+1}) \right] + L_{\alpha,4} \frac{\sigma}{\sqrt{M}} \eqsp.
\end{align}
\end{lemma}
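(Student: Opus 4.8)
The plan is to mirror the proof of \Cref{lem:mono:refined} while carefully tracking the discrepancy between the exact gradient $\bmuf[\hmu_n]$ and its Monte Carlo surrogate $\bmufk[\hmu_n]$. First I would apply \Cref{lem:fondam} with $\mu=\hmu_n$ and $\zeta=\hmu_{n+1}=\iterationK(\hmu_n)$, whose density w.r.t. $\hmu_n$ is $g\propto\GammaAlpha(\bmufk[\hmu_n]+\cte)$. This yields $A_\alpha\leq\Psif(\hmu_n)-\Psif(\hmu_{n+1})$, where $A_\alpha$ is the quantity \eqref{eq:Aalpha} built from this $g$. The essential point is that \eqref{eq:Aalpha} carries the \emph{true} gradient $\bmuf[\hmu_n]$ inside $\falpha'$ (it comes from the exact fundamental bound), whereas the weight $g$ is defined through the \emph{stochastic} gradient $\bmufk[\hmu_n]$.

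Next I would expand $A_\alpha$ exactly as in the proof of \Cref{thm:monotone}, obtaining for $\alpha\in\rset\setminus\lrcb{1}$ a leading factor $[\bmuf[\hmu_n](\theta)+(\alpha-1)^{-1}]$ (and $[\log g+\bmuf[\hmu_n]+\cte]$ when $\alpha=1$), and then split $\bmuf[\hmu_n]=\bmufk[\hmu_n]+(\bmuf[\hmu_n]-\bmufk[\hmu_n])$. This decomposes $A_\alpha=\hat{A}_\alpha+E$, where $\hat{A}_\alpha$ is the expression in which $\bmuf[\hmu_n]$ has been replaced by $\bmufk[\hmu_n]$ everywhere, and $E$ is the remainder carrying the factor $\bmuf[\hmu_n]-\bmufk[\hmu_n]$. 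Crucially, $\hat{A}_\alpha$ has exactly the algebraic form treated in \Cref{lem:mono:refined}, but with the random variable $\bmufk[\hmu_n](\theta)+\cte$ in place of $\bmuf[\hmu_n](\theta)+\cte$; since $g$ is normalised so that $\hmu_n(1-g)=0$ and the monotonicity consequences of \ref{hyp:gamma} hold on $\Domain$ no matter which gradient is plugged in, the same symmetrisation with an independent copy gives $\hat{A}_\alpha\geq\frac{\ctemono}{2}\Var_{\hmu_n}(\bmufk[\hmu_n])$.

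It then remains to control $E=\int_\Tset\hmu_n(\rmd\theta)[\bmuf[\hmu_n]-\bmufk[\hmu_n]]\,g^{\alpha-1}[1-g]$ (with $g^{\alpha-1}=1$ in the case $\alpha=1$). I would bound it crudely by $|E|\leq\sup_{\theta\in\Tset}|\bmufk[\hmu_n](\theta)-\bmuf[\hmu_n](\theta)|\times\int_\Tset\hmu_n(\rmd\theta)\,g(\theta)^{\alpha-1}|1-g(\theta)|$. Writing $g(\theta)=\GammaAlpha(\bmufk[\hmu_n](\theta)+\cte)/Z$ with $Z=\hmu_n(\GammaAlpha(\bmufk[\hmu_n]+\cte))\in[\inf_{v\in\Domain}\GammaAlpha(v),\sup_{v\in\Domain}\GammaAlpha(v)]$, each factor is controlled using only uniform bounds on $\GammaAlpha$ over $\Domain$: $\GammaAlpha(\bmufk[\hmu_n]+\cte)^{\alpha-1}\leq\sup_{v\in\Domain}\GammaAlpha(v)^{\alpha-1}$, the monotonicity of $t\mapsto t^{1-\alpha}$ gives $Z^{1-\alpha}\leq\sup_{v\in\Domain}\GammaAlpha(v)^{1-\alpha}$, and $|1-\GammaAlpha(\bmufk[\hmu_n]+\cte)/Z|\leq 1+\sup_{v\in\Domain}\GammaAlpha(v)/\inf_{v\in\Domain}\GammaAlpha(v)$. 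Their product is precisely $L_{\alpha,4}$, so $\int_\Tset\hmu_n(\rmd\theta)\,g^{\alpha-1}|1-g|\leq L_{\alpha,4}$ pointwise and $|E|\leq L_{\alpha,4}\sup_{\theta\in\Tset}|\bmufk[\hmu_n]-\bmuf[\hmu_n]|$.

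Finally, combining $\frac{\ctemono}{2}\Var_{\hmu_n}(\bmufk[\hmu_n])\leq\hat{A}_\alpha=A_\alpha-E\leq\Psif(\hmu_n)-\Psif(\hmu_{n+1})-E$, taking expectations, and invoking \ref{hyp:SupThetaMean} to get $\PE[-E]\leq\PE[|E|]\leq L_{\alpha,4}\sigma/\sqrt{M}$, delivers \eqref{eq:mono:refined:sto}. The main obstacle is exactly the gradient mismatch exposed in the second paragraph: the fundamental bound furnishes $A_\alpha$ with the \emph{exact} gradient, while the variance we must reproduce is that of the \emph{stochastic} gradient, and the whole argument hinges on the clean split into a term $\hat A_\alpha$ that reproduces the deterministic refined bound and a remainder $E$ that \ref{hyp:SupThetaMean} controls at the $O(1/\sqrt{M})$ rate.
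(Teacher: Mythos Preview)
Your proposal is correct and follows essentially the same route as the paper: apply the fundamental bound from \Cref{lem:fondam} with $g\propto\GammaAlpha(\bmufk[\hmu_n]+\cte)$, split the exact gradient factor as $\bmuf[\hmu_n]=\bmufk[\hmu_n]+(\bmuf[\hmu_n]-\bmufk[\hmu_n])$ so that the first piece reproduces the deterministic variance lower bound of \Cref{lem:mono:refined}, and control the remainder $E_\alpha=\hmu_n\big([\bmufk[\hmu_n]-\bmuf[\hmu_n]]\,g^{\alpha-1}[1-g]\big)$ via \ref{hyp:SupThetaMean}. In fact you make the appearance of the constant $L_{\alpha,4}$ more explicit than the paper does, by spelling out how each of $g^{\alpha-1}$, $Z^{1-\alpha}$ and $|1-g|$ is bounded through $\sup_{v\in\Domain}\GammaAlpha(v)^{\alpha-1}$, $\sup_{v\in\Domain}\GammaAlpha(v)^{1-\alpha}$ and $1+\sup_{v\in\Domain}\GammaAlpha(v)/\inf_{v\in\Domain}\GammaAlpha(v)$ respectively.
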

\begin{proof}We consider the case $\cte = 0$ for simplicity. Set $\hat{g}_n(\theta) = \tgamma(\bmufk[\hmu_n](\theta))$ for all $\theta \in \Tset$ and for all $n \in \nstar$, where $\tgamma(u) = \GammaAlpha(u) / \PE_{\hmu_n}[\GammaAlpha]$. Based on the proof of \Cref{thm:monotone}, we have
\begin{align}\label{eq:interSto}
A_\alpha \leq \Psif(\hmu_n) - \Psif(\hmu_{n+1}) \eqsp,
\end{align}
where
$$
A_\alpha = \begin{cases}
             \int_\Tset \hmu_n(\rmd \theta) \left[ \log \hat{g}_n(\theta) + \bmuf[\hmu_n][1](\theta) + \cte \right] \left[ 1 - \hat{g}_n(\theta) \right] & \mbox{if } \alpha = 1 \\
             \int_\Tset \hmu_n(\rmd \theta) \left[\bmuf[\hmu_n](\theta) + \frac{1}{\alpha-1} \right] \hat{g}_n(\theta)^{\alpha -1} \left[ 1 - \hat{g}_n(\theta) \right], & \mbox{otherwise}.
           \end{cases}
$$
Now defining
$$
E_\alpha = \hmu_n \left(\left[ \bmufk[\hmu_n] - \bmuf[\hmu_n] \right] \hat{g}_n^{\alpha-1} \left[ 1 - \hat{g}_n \right] \right) \eqsp.
$$
and based on the proof of \Cref{lem:mono:refined} we can rewrite \eqref{eq:interSto} as
$$
\frac{\ctemono}{2} \Var_{\hmu_n} \left( \bmufk[\hmu_n] \right)  \leq \Psif(\hmu_n) - \Psif(\hmu_{n+1}) + E_\alpha \eqsp
$$
and we deduce
\begin{align*}
      \PE \left[E_\alpha \right] &= \PE \left[ \hmu_n \left(\left[ \bmufk[\hmu_n] - \bmuf[\hmu_n] \right] \hat{g}_n^{\alpha -1} \left[ 1 - \hat{g}_n \right] \right) \right] \\
      & \leq L_{\alpha, 4} \PE\left[ \hmu_n \left( \PE \left[\left| \bmufk[\hmu_n] - \bmuf[\hmu_n] \right| | \mathcal F_n \right]  \right) \right] \\
      & \leq L_{\alpha, 4}\cdot\frac{\sigma}{\sqrt{M}} \eqsp.
\end{align*}
\end{proof}

Next, we derive the Stochastic version of \Cref{thm:admiss} in the particular case of the Entropic Mirror Descent.

\begin{thm}\label{thm:stoThm2} Assume \ref{hyp:positive}. Set $\GammaAlpha(v) = e^{-\eta v}$ and let $\eta$ be such that \ref{hyp:gamma} and \ref{hyp:gamma2} hold. Let $\hmu_1\in\meas{1}(\Tset)$ be such that $\Psif(\hmu_1)<\infty$, let $(\hmu_n)_{n\in \nstar}$ be defined by \eqref{eq:def:mu:sto} and assume that \ref{hyp:SupThetaMean} holds. Further assume that $\ctemono$, $\cteinf > 0$ and that $0 <\inf_{v \in \Domain} \GammaAlpha(v) \leq \sup_{v \in \Domain} \GammaAlpha(v) < \infty$. Then, for all $N \in \nstar$, we have
\begin{align*}
\PE\left[\Psif\lr{\frac{1}{N} \sum_{n=1}^{N}  \hmu_n} - \Psif(\mu^\star) \right] \leq \frac{1}{N \eta} \lrb{ KL\couple[\mu^\star][\mu_1] + L\frac{ \ctesup}{\ctemono} \Delta_1 } + \frac{1}{\sqrt{M}} \frac{L \ctesup L_{\alpha,4} \sigma}{\eta \ctemono} \eqsp.
\end{align*}
\end{thm}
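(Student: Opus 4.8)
The plan is to retrace the proof of \Cref{thm:admiss} for the Entropic Mirror Descent, replacing the exact gradient $\bmuf[\hmu_n]$ by its Monte Carlo estimate $\bmufk[\hmu_n]$ wherever the update formula intervenes, and then to control the resulting discrepancy in expectation. As in \Cref{thm:admiss}, I would treat $\cte = 0$ for readability. Setting $\Delta_n \eqdef \Psif(\hmu_n) - \Psif(\mu^\star)$, the convexity of $\falpha$ gives the subgradient inequality $\Delta_n \leq [\hmu_n - \mu^\star](\bmuf[\hmu_n])$ (the analogue of \eqref{eq:rate1}), which I would split as
\begin{equation*}
\Delta_n \leq [\hmu_n - \mu^\star](\bmufk[\hmu_n]) + [\hmu_n - \mu^\star](\bmuf[\hmu_n] - \bmufk[\hmu_n]) \eqsp.
\end{equation*}

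For the first term I would reproduce the chain \eqref{eq:rate2}--\eqref{eq:rate3} verbatim, but with $\bmufk[\hmu_n]$ in place of the exact gradient: this is legitimate precisely because $\hmu_{n+1}$ is defined through $\GammaAlpha(\bmufk[\hmu_n])$, so that $\log\tfrac{\rmd\hmu_{n+1}}{\rmd\hmu_n} = \log\GammaAlpha(\bmufk[\hmu_n]) - \log\hmu_n(\GammaAlpha(\bmufk[\hmu_n]))$. Since here $-\log\GammaAlpha$ is linear, the concavity inequality \eqref{eq:rate2} holds with equality and $\cteinf = 1/\eta$, which removes the coefficient-bounding subtlety of the exact proof (that step used $\Delta_n \geq 0$, whereas $[\hmu_n - \mu^\star](\bmufk[\hmu_n])$ need not be nonnegative). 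Combining with the $L$-smoothness argument then yields
\begin{equation*}
[\hmu_n - \mu^\star](\bmufk[\hmu_n]) \leq \frac{1}{\eta}\left[\frac{L\ctesup}{2}\Var_{\hmu_n}(\bmufk[\hmu_n]) + KL\couple[\mu^\star][\hmu_n] - KL\couple[\mu^\star][\hmu_{n+1}]\right] \eqsp.
\end{equation*}

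The step I expect to matter most concerns the second, error term, and is the reason the final rate carries a single $O(1/\sqrt M)$ contribution rather than two. Because $\bmufk[\hmu_n]$ is conditionally unbiased, $\PE[\bmufk[\hmu_n](\theta) \mid \mathcal F_n] = \bmuf[\hmu_n](\theta)$, while $\hmu_n$, $\mu^\star$ and $\bmuf[\hmu_n]$ are all $\mathcal F_n$-measurable. Interchanging conditional expectation and integration (Fubini), this forces $\PE\big[[\hmu_n - \mu^\star](\bmuf[\hmu_n] - \bmufk[\hmu_n]) \mid \mathcal F_n\big] = 0$, so the discrepancy term vanishes in expectation and leaves no residue. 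Thus, contrary to what a crude $2\sigma/\sqrt M$ bound via \ref{hyp:SupThetaMean} would suggest, the only stochastic error surviving in the final rate is the one produced by the variance control.

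Finally, I would take expectations, invoke \Cref{lem:mono:refined:sto} to replace $\PE[\Var_{\hmu_n}(\bmufk[\hmu_n])]$ by $\frac{2}{\ctemono}(\PE[\Psif(\hmu_n) - \Psif(\hmu_{n+1})] + L_{\alpha,4}\sigma/\sqrt{M})$, and sum over $n = 1, \ldots, N$. The $KL$ terms and the $\Psif$ increments telescope; bounding $KL\couple[\mu^\star][\hmu_{N+1}] \geq 0$ and $\Psif(\hmu_1) - \Psif(\hmu_{N+1}) \leq \Delta_1$ (the latter because every iterate lies in $\meas{1,\hmu_1}(\Tset)$, whence $\Psif(\hmu_{N+1}) \geq \Psif(\mu^\star)$), then dividing by $N$ and applying Jensen's inequality to the convex map $\Psif$ to pass from $\frac1N\sum_n \Psif(\hmu_n)$ to $\Psif(\frac1N\sum_n\hmu_n)$, would give exactly the claimed bound. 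The remaining bookkeeping is routine: well-definedness of the iterates follows from $\hbinfty < \infty$ (which makes $\GammaAlpha(\bmufk[\hmu_n] + \cte)$ bounded away from $0$ and $\infty$), and the finiteness of $L_{\alpha,4}$ and $\ctesup$ from the boundedness of $\GammaAlpha$ on $\Domain$.
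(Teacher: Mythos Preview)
Your proposal is correct and follows essentially the same approach as the paper's proof. The only cosmetic difference is the order in which the conditional-unbiasedness argument appears: the paper takes expectations immediately after the convexity step and then replaces $\bmuf[\hmu_n]$ by $\bmufk[\hmu_n]$ via $\PE[\bmufk[\hmu_n]\mid\mathcal F_n]=\bmuf[\hmu_n]$, whereas you first split pathwise into a $\bmufk[\hmu_n]$-part plus an error and show the error has zero conditional mean; both routes yield the same inequality $\PE[\Delta_n]\leq\PE\big[[\hmu_n-\mu^\star](\bmufk[\hmu_n])\big]$ and proceed identically thereafter. One small bookkeeping remark: well-definedness of the iterates in \Cref{thm:stoThm2} comes from the standing assumption $0<\inf_{\Domain}\GammaAlpha\leq\sup_{\Domain}\GammaAlpha<\infty$, not from $\hbinfty<\infty$ (that is an assumption of \Cref{thm:admiss:spec:sto}, not of \Cref{thm:stoThm2}).
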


\begin{proof} We consider the case $\cte = 0$ for simplicity. Let $n \in \nstar$ and set $\Delta_n = \Psif(\hmu_n) - \Psif(\mu^\star)$.  Then,
\begin{align}\label{eq:rate1:sto}
\PE [\Delta_n] &\leq \PE \left[ \int_\Tset \bmuf[\hmu_n](\rmd \hmu_n - \rmd \mu^\star) \right] \\
& =  \PE \left[ \int_\Tset \PE \left[ \bmufk[\hmu_n] | \mathcal F_n \right] (\rmd \hmu_n - \rmd \mu^\star) \right] \nonumber\\
& = \PE \left[ \int_\Tset \bmufk[\hmu_n](\rmd \hmu_n - \rmd \mu^\star) \right]  \nonumber \\
&= \PE \left[\int_\Tset (\hmu_n(\bmufk[\hmu_n]) - \bmufk[\hmu_n]) \rmd \mu^\star \right] \eqsp. \nonumber
\end{align}
By adapting the proof of \Cref{thm:admiss}, we deduce
$$
\PE \left[\Delta_n \right] \leq \PE \left[\frac{1}{(- \log \GammaAlpha)'(\hmu_n(\bmufk[\hmu_n]))} \int_\Tset \left[ \log \GammaAlpha(\bmufk[\hmu_n]) - \log \GammaAlpha(\hmu_n(\bmufk[\hmu_n])) \right] \rmd \mu^\star \right] \eqsp.
$$
In the particular case of the Entropic Mirror Descent (for which $\GammaAlpha(v) = e^{-\eta v}$) we obtain that $-(\log \GammaAlpha)' = \eta$, that is $\cteinf = \frac{1}{\eta}$ and by following the proof of \Cref{thm:admiss} we have
\begin{align}\label{eq:interEtaSto}
\PE\left[\Delta_n\right] &\leq \frac{1}{\eta} \PE\left[ \int_\Tset \log \left(\frac{\rmd \hmu_{n+1}}{\rmd \hmu_n} \right)\rmd \mu^\star  + \frac{L}{2} \Var_{\hmu_n} \left( \bmufk[\hmu_n]\right) \ctesup \right] \eqsp.
\end{align}
By assumption on $\inf_{v \in \Domain} \GammaAlpha(v)$ and on $\sup_{v \in \Domain} \GammaAlpha(v)$, we have that $L_{\alpha, 4}< \infty$ and combining with \Cref{lem:mono:refined:sto}, we obtain
\begin{align*}
\PE\left[\Delta_n\right] &\leq \frac{1}{\eta} \PE\left[ \int_\Tset \log \left(\frac{\rmd \hmu_{n+1}}{\rmd \hmu_n} \right)\rmd \mu^\star  + \frac{L \ctesup}{\ctemono} \lrb{ \Psif(\hmu_n) - \Psif(\hmu_{n+1}) } \right] \\
 &\quad + \frac{L \ctesup L_{\alpha,4}}{\eta \ctemono}  \frac{\sigma}{\sqrt{M}}  \eqsp.
\end{align*}
As the r.h.s involves two telescopic sums, we deduce
\begin{align*}
\frac{1}{N} \sum_{n=1}^{N} \PE\left[\Delta_n\right] \leq \frac{1}{\eta N} \lrb{ KL\couple[\mu^\star][\hmu_1] + L\frac{ \ctesup}{\ctemono} \Delta_1 } +  \frac{L \ctesup L_{\alpha,4}}{\eta \ctemono}  \frac{\sigma}{\sqrt{M}} \eqsp,
\end{align*}
and we conclude using the convexity of the mapping $\mu \mapsto \Psif(\mu)$.
\end{proof}
With all these elements in hand, we can now prove \Cref{thm:admiss:spec:sto}.
\begin{proof}[Proof of \Cref{thm:admiss:spec:sto}]
The proof follows from a straightforward adaptation of the proof of \Cref{thm:admiss:spec} for the Entropic Mirror Descent (we replace $\binfty$ by $\hbinfty$) combined with \Cref{thm:admiss:spec:sto}.
\end{proof}

\subsection{Proof of \Cref{thm:emd:sto}}
\label{subsec:CVstoMD}

\begin{proof}[Proof of \Cref{thm:emd:sto}]The proof of \Cref{thm:emd:sto} can be adapted from the proof of \cite[Section 2.3]{doi:10.1137/070704277}. We consider the case $\cte = 0$ for simplicity. Note that the case $\cte \neq 0$ unfolds similarly by replacing $\bmuf[\hmu_n]$ by $\bmuf[\hmu_n] + \cte$ everywhere in the proof below. Let $n \in \nstar$ and set $\Delta_n = \Psif(\hmu_n) - \Psif(\mu^\star)$. The convexity of $\falpha$ implies that
\begin{align*}
\Delta_n &\leq \int_\Tset \bmuf[\hmu_n] (\rmd \hmu_n-\rmd \mu^\star) \eqsp.
\end{align*}
Now taking the expectation, we obtain that
\begin{align}\label{eq:MDSto}
\PE[\Delta_n] &\leq \PE \left[ \int_\Tset  \bmuf[\hmu_n] (\rmd \hmu_n-\rmd \mu^\star)\right] \\
&= \PE \left[ \int_\Tset \mathbb E [  \bmufk[\hmu_n] \mid \mathcal F_n ] (\rmd \hmu_n-\rmd \mu^\star)\right] \nonumber \\
& = \PE \left[ \int_\Tset \bmufk[\hmu_n]  (\rmd \hmu_n-\rmd \mu^\star)\right] \nonumber \eqsp.
\end{align}
In addition, using that $\frac{\rmd \hmu_{n+1}}{\rmd \hmu_n} \propto e^{-\eta_n\bmufk[\hmu_n]}$ and noting that the integral of any constant w.r.t $\hmu_n-\mu^\star$ is null, we deduce
\begin{align*}
\int_\Tset  \bmufk[\hmu_n] (\rmd \hmu_n- \rmd \mu^\star) &=\frac{1}{\eta_n} \int_\Tset \log\lr{\frac{\rmd \hmu_n}{ \rmd \hmu_{n+1}}} (\rmd \hmu_n-\rmd \mu^\star)\\
&=\frac{1}{\eta_n} \int_\Tset \log\lr{\frac{\rmd \hmu_n}{\rmd \hmu_{n+1}}} \rmd \hmu_n-\frac{1}{\eta_n} \int_\Tset \log\lr{\frac{\rmd \hmu_n}{\rmd \hmu_{n+1}}} \rmd \mu^\star\\
&=\frac{1}{\eta_n}\lrb{ \int_\Tset \log\lr{\frac{\rmd \hmu_n}{\rmd \hmu_{n+1}}} (\rmd \hmu_n-\rmd \hmu_{n+1})-KL(\hmu_{n+1}||\hmu_n)} \\
&\qquad + \frac{1}{\eta_n}\lrb{ KL(\mu^\star||\hmu_n)-KL(\mu^\star||\hmu_{n+1})}
\end{align*}
Let us first consider the term inside the first brackets. We have that
\begin{align*}
\int_\Tset \log\lr{\frac{\rmd \hmu_n}{\rmd \hmu_{n+1}}} (\rmd \hmu_n-\rmd \hmu_{n+1})
&=\eta_n\int_\Tset  \bmufk[\hmu_n] (\rmd \hmu_n-\rmd \hmu_{n+1}) \\
&\leq \eta_n \cteLipSto[n] \tv{\hmu_n-\hmu_{n+1}} \eqsp,
\end{align*}
where we have set
$$
\cteLipSto[n] = \frac{1}{M} \sum_{m=1}^{M} \sup_{\theta \in \Tset} \frac{k(\theta,Y_{m,n+1})}{\hmu_n k(Y_{m,n+1})} \left| \falpha ' \lr{\frac{\hmu_n k(Y_{m,n+1})}{p(Y_{m,n+1})}} \right|
$$
and where we have used that $\frac{\rmd \hmu_{n+1}}{\rmd \hmu_n} \propto e^{-\eta_n\bmufk[\hmu_n]}$ and that the integral of any constant w.r.t $\hmu_n-\hmu_{n+1}$ is null. Moreover, Pinsker's inequality yields
$$
-KL(\hmu_{n+1}|| \hmu_n) \leq -\frac12 \tv{\hmu_n-\hmu_{n+1}}^2 \eqsp.
$$
Now combining with the fact that $\eta_n\cteLipSto[n] a -a^2/2 \leq (\eta_n \cteLipSto[n])^2/2$ which is valid for all $a\geq0$, we get:
\begin{align}\label{eq:StoBound}
\frac{1}{\eta_n}\lrb{ \int_\Tset \log\lr{\frac{\rmd \hmu_n}{\rmd \hmu_{n+1}}} (\rmd \hmu_n-\rmd \hmu_{n+1})-KL(\hmu_{n+1} || \hmu_n)} \leq  \frac{\eta_n  \cteLipSto[n]^2}{2}.
\end{align}
Furthermore, using Jensen's inequality and \eqref{bound_unif}, we have that
\begin{align*}
  \PE \lrb{\cteLipSto[n]^2} & \leq \PE \lrb{ \frac{1}{M} \sum_{m=1}^M \left(\sup_{\theta \in \Tset} \frac{k(\theta,Y_{m,n+1})}{\hmu_n k(Y_{m,n+1})} \left| \falpha ' \lr{\frac{\hmu_n k(Y_{m,n+1})}{p(Y_{m,n+1})}} \right|\right)^2 } \\
  & \leq \cteLipStoInf^2
\end{align*}
and as a consequence, we obtain from \eqref{eq:MDSto} and \eqref{eq:StoBound} that
\begin{align*}
\eta_n \PE [\Delta_n] \leq \eta_n^2 \cteLipStoInf^2 /2+ \mathbb E [(KL(\mu^\star|| \hmu_n)-KL(\mu^\star|| \hmu_{n+1})) ] \eqsp.
\end{align*}
Finally, as we recognize a telescoping sum in the right-hand side, we have
$$
\sum_{n=1}^N \eta_n \PE [\Delta_n] \leq \sum_{n=1}^N \eta_n^2 \cteLipStoInf^2 /2+ KL(\mu^\star || \hmu_{1}) \eqsp
$$
that is we have, by convexity of the mapping $\mu \mapsto \Psif(\mu)$,
\begin{align}\label{eq:interEtaSto2}
\PE\lrb{\Psif\lr{\sum_{n=1}^N w_n \hmu_n}-\Psif(\mu^\star)}\leq \frac{\cteLipStoInf^2 \sum_{n=1}^N \eta_n^2/2}{\sum_{n=1}^{N} \eta_n} + \frac{KL(\mu^\star || \hmu_1)}{\sum_{n=1}^{N} \eta_n} \eqsp.
\end{align}
Then,
\begin{itemize}
  \item setting $\eta_n = \eta_0/\sqrt{n}$ for all $n \geq 1$ in \eqref{eq:interEtaSto2} yields
$$
\PE\lrb{ \Psif\lr{\sum_{n=1}^N w_n \hmu_n}-\Psif(\mu^\star)} \leq \frac{(1+\log(N)) \cteLipStoInf^2 \eta_0^2/2 + KL(\mu^\star || \hmu_1)}{\eta_0 \sqrt{N}}  \eqsp.
$$
  \item setting $\eta_n = \eta_0/\sqrt{N}$ for all $n = 1 \ldots N$ \eqref{eq:interEtaSto2} yields
$$
   \PE\lrb{ \Psif\lr{\frac{1}{N} \sum_{n=1}^N  \hmu_n}-\Psif(\mu^\star)} \leq \frac{\cteLipStoInf^2 \eta_0^2/2 + KL(\mu^\star || \hmu_1)}{\eta_0 \sqrt{N}}
$$
Furthermore, the r.h.s is minimal for $\eta_0 = \cteLipStoInf^{-1} \sqrt{2 KL(\mu^\star|| \hmu_1)}$ that is for $\eta_n = \cteLipStoInf^{-1} \sqrt{\frac{2 KL(\mu^\star|| \hmu_1)}{N}}$ for all $n = 1 \ldots N$.
\end{itemize}

\end{proof}


\subsection{\Cref{ex:GaussianMixtureModels} and Condition \eqref{bound_unif}}
\label{sec:BoundUnif}

\begin{proof}[Proof that Condition \eqref{bound_unif} is satisfied in \Cref{ex:GaussianMixtureModels}] \ \\

We have $k_h(\theta,y) = \frac{e^{-\| y - \theta \|^2 / (2h^2)}}{(2 \pi h^2)^{d/2}}$ and $p(y) = 0.5 \frac{e^{-\| y - \theta^\star_1 \|^2 / 2}}{(2 \pi)^{d/2}}+ 0.5 \frac{e^{-\| y - \theta^\star_2 \|^2 / 2}}{(2 \pi)^{d/2}}$ for all $\theta \in \Tset$ and all $y \in \Yset$. Since we have chosen $\alpha = 1$, we have $\falpha '(u) = \log(u)$ for all $u > 0$ and we are interested in the following quantity
$$
\cteLipStoInf[1]^2 \eqdef \sup_{\mu \in \meas{1}(\Tset)} \int_\Yset \sup_{\theta, \theta ' \in \Tset} \frac{k_h(\theta,y)^2}{k_h(\theta ', y)} \left| \log\lr{ \frac{\mu k_h(y)}{p(y)}} \right|^2 \nu(\rmd y) \eqsp.
$$
Recall that by assumption $\Tset = \mathcal{B}(0,r)$. Then, for all $\theta, \theta' \in \Tset$ and for all $y \in \Yset$, we can write
\begin{align*}
  \frac{k_h(\theta, y)}{k_h(\theta ', y)} &= e^{\frac{-\| y - \theta \|^2 + \| y - \theta ' \|^2}{2h^2}} = e^{\frac{2 <y, \theta - \theta '> - \| \theta \|^2 + \| \theta '\|^2}{2h^2}} \\
  & \leq e^{\frac{2 |<y,\theta - \theta'>| + \| \theta\|^2 + \|\theta '\|^2 }{2h^2}} \\
  & \leq  e^{\frac{\|y \| \| \theta - \theta'  \| + r^2}{h^2}} \\
  & \leq e^{\frac{\|y \| 2 r + r^2}{h^2}}
\end{align*}
Furthermore, we also have for all $y \in \Yset$
\begin{align*}
& e^{-\sup_{\theta \in \Tset} \frac{\| y - \theta \|^2}{2h^2}}\leq (2 \pi h^2)^{d/2} \mu k_h(y)  \leq 1 \\
& e^{-\max_{i \in \lrcb{1,2}} \frac{\| y - \theta_i^\star \|^2}{2}}\leq (2 \pi )^{d/2} p(y) \leq 1
\end{align*}
and we can deduce for all $\mu \in \meas{1}(\Tset)$ and all $y \in \Yset$
\begin{align*}
\left| \log\lr{ \frac{\mu k_h(y)}{p(y)}} \right| & \leq \sup_{\theta \in \Tset} \frac{\| y - \theta \|^2}{2h^2}  + \max_{i \in \lrcb{1,2}} \frac{\| y - \theta_i^\star \|^2}{2} + d |\log h| \\
& \leq \frac{(\| y \| + r)^2}{2}\lrb{\frac{1}{h^2} + 1} + d |\log h| \eqsp.
\end{align*}
Consequently, we have
$$
 \cteLipStoInf[1]^2 \leq \int_\Yset \frac{ e^{\frac{\|y \| 2 r + r^2}{h^2}}}{(2 \pi h^2)^{d/2}} \underbrace{\sup_{\theta \in \Tset} e^{-\| y - \theta \|^2 / (2h^2)}}_{\leq e^{-(\|y\| - r)_+^2/(2h^2)}} \lr{\frac{(\| y \| + r)^2}{2}\lrb{\frac{1}{h^2} + 1} + d |\log h|}^2 \nu(\rmd y)
$$
that is $\cteLipStoInf[1] < \infty$.
\end{proof}

\section{}

\subsection{\Cref{lem:discretize} : statement and proof}

Recall that $Y_1, Y_2, \ldots$ are i.i.d random variables with common density $\mu k$ w.r.t $\nu$, defined on the same probability space $(\Omega,\mcf,\PP)$ and we denote by $\PE$ the associated expectation operator. Here, $\GammaAlpha$ is chosen as $\GammaAlpha(v) = [ \left(\alpha -1 \right)v + 1]^{\eta/(1-\alpha)}$.

\begin{lemma}\label{lem:discretize}
Assume \ref{hyp:positive}. Let $\alpha \in \Rset \setminus \lrcb{1}$, $\eta > 0$ and $\cte$ be such that $(\alpha-1)\cte \geq 0$. Let $\mu \in \meas{1}(\Tset)$ be such that  $\mu(|\bmuf|) < \infty$ and
\begin{equation}
\label{eq:discretizeMinus0}
\int_\Tset \mu(\rmd \theta)  \PE\lrb{\lrcb{\frac{k(\theta,Y_1)}{\mu k(Y_1)} \lr{\frac{\mu k(Y_1)}{p(Y_1)}}^{\alpha-1} + (\alpha-1)\cte}^{\frac{\eta}{1-\alpha}}} <\infty\eqsp.
\end{equation}
Then,
\begin{equation} \label{eq:discretize0}
\llim_{M \to \infty} \mu( \GammaAlpha(\bmufk+ \cte)) = \mu(\GammaAlpha(\bmuf+ \cte))\,, \quad \PP-\as
\end{equation}
\end{lemma}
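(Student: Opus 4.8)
The plan is to write $\GammaAlpha(\bmufk(\theta)+\cte)=W_M(\theta)^{\eta/(1-\alpha)}$, where $W_M(\theta)\eqdef(\alpha-1)(\bmufk(\theta)+\cte)+1$. Since $\falpha'(u)=\frac{1}{\alpha-1}(u^{\alpha-1}-1)$, this quantity is, up to an additive correction, an empirical average:
\[
W_M(\theta)=\frac1M\sum_{m=1}^M\frac{k(\theta,Y_m)}{\mu k(Y_m)}\lr{\frac{\mu k(Y_m)}{p(Y_m)}}^{\alpha-1}+(\alpha-1)\cte+\lrb{1-\frac1M\sum_{m=1}^M\frac{k(\theta,Y_m)}{\mu k(Y_m)}}\eqsp.
\]
First I would fix $\theta$ and apply the strong law of large numbers to the two averages. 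As $Y_m\sim\mu k$, the first converges $\PP$-a.s. to $\int_\Yset k(\theta,y)\lr{\mu k(y)/p(y)}^{\alpha-1}\nu(\rmd y)=(\alpha-1)\bmuf(\theta)+1$ (finite for $\mu$-a.e. $\theta$ since $\mu(|\bmuf|)<\infty$), and the second to $\int_\Yset k(\theta,y)\nu(\rmd y)=1$, so the bracketed correction vanishes. Hence $W_M(\theta)\to\int_\Yset k(\theta,y)\lr{\mu k(y)/p(y)}^{\alpha-1}\nu(\rmd y)+(\alpha-1)\cte$, which is strictly positive under \ref{hyp:positive} together with $(\alpha-1)\cte\geq0$. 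By continuity of $u\mapsto u^{\eta/(1-\alpha)}$ on $(0,\infty)$, this gives the pointwise limit $\GammaAlpha(\bmufk(\theta)+\cte)\to\GammaAlpha(\bmuf(\theta)+\cte)$, $\PP$-a.s., for $\mu$-a.e. $\theta$.

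The main obstacle is that this convergence holds outside a $\PP$-null set depending on $\theta$, so it cannot be integrated against $\mu(\rmd\theta)$ to deduce \eqref{eq:discretize0}. To fix the order of the quantifiers I would invoke the Integrated Law of Large Numbers (\Cref{lem:lln:integrated}) for the jointly $(\Tsigma\otimes\mcf)$-measurable maps above: it produces a single $\PP$-full event on which the averages converge for $\mu$-a.e. $\theta$ simultaneously. On this event the integrands converge $\mu$-a.e., and it remains, for each such fixed realisation, to interchange $\llim_{M\to\infty}$ with $\int_\Tset\mu(\rmd\theta)$.

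This interchange is not covered by the ordinary Dominated Convergence Theorem, because the natural envelope of $\GammaAlpha(\bmufk(\theta)+\cte)$ is itself an $M$-dependent empirical average rather than a fixed integrable dominator. I would therefore construct a dominating sequence $G_M$ by bounding $W_M(\theta)^{\eta/(1-\alpha)}$, via elementary monotonicity/subadditivity properties of $u\mapsto u^{\eta/(1-\alpha)}$ (with a case split according to the sign of the exponent, i.e. to whether $\alpha<1$ or $\alpha>1$), by $\frac1M\sum_m\phi(\theta,Y_m)$ plus lower-order terms, where $\phi(\theta,y)=\lrcb{\frac{k(\theta,y)}{\mu k(y)}\lr{\mu k(y)/p(y)}^{\alpha-1}+(\alpha-1)\cte}^{\eta/(1-\alpha)}$ is exactly the integrand of \eqref{eq:discretizeMinus0}. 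Applying the Integrated LLN once more, $\int_\Tset G_M\,\rmd\mu\to\int_\Tset\mu(\rmd\theta)\PE[\phi(\theta,Y_1)]$, which is finite precisely by \eqref{eq:discretizeMinus0}; since the integrals of the envelopes converge to the integral of the limiting envelope, the Generalized Dominated Convergence Theorem (\Cref{lem:gen:tcd}) applies and yields \eqref{eq:discretize0}.

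I expect the principal technical difficulties to be twofold: verifying the joint measurability and integrability hypotheses required by \Cref{lem:lln:integrated} and \Cref{lem:gen:tcd}, and producing the envelope $G_M$ with a controlled limiting integral, which is where the case analysis on the sign of $\eta/(1-\alpha)$ and the handling of the additive correction $1-\frac1M\sum_m k(\theta,Y_m)/\mu k(Y_m)$ demand the most care.
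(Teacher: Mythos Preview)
Your proposal follows essentially the same route as the paper: rewrite $\GammaAlpha(\bmufk+\cte)$ as $W_M^{\phi}$ with $\phi=\eta/(1-\alpha)$, obtain pointwise convergence from the LLN, upgrade to a single $\PP$-full event via the Integrated LLN (\Cref{lem:lln:integrated}), dominate by envelopes built from $u\mapsto u^{\phi}$, and close with the Generalized Dominated Convergence Theorem (\Cref{lem:gen:tcd}).

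Two remarks. First, the case split you propose (on the \emph{sign} of $\phi$, i.e.\ $\alpha<1$ versus $\alpha>1$) is not quite the right dichotomy: what matters is whether $u\mapsto u^{\phi}$ is convex ($\phi\notin(0,1]$) or concave ($\phi\in(0,1]$). In the convex regime Jensen yields directly the envelope $M^{-1}\sum_m g(\theta,Y_m)^{\phi}$, and the Generalized DCT applies; in the concave regime the paper instead uses subadditivity $(u+v)^{\phi}\leq u^{\phi}+v^{\phi}$ together with a truncation $\{g\leq M'\}$ and a $\liminf/\limsup$ sandwich before letting $M'\to\infty$. Second, you are actually more careful than the paper on one point: you correctly isolate the additive correction $1-M^{-1}\sum_m k(\theta,Y_m)/\mu k(Y_m)$ separating $h(\bmufk(\theta))$ from the true empirical mean $M^{-1}\sum_m g(\theta,Y_m)$, whereas the paper's convexity bound tacitly identifies the two and drops this term; your plan to control it separately is the right instinct.
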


\begin{proof}
Set $g(\theta, y) = \frac{k(\theta,y) }{\mu k(y)} ( \frac{\mu k(y)}{p(y)} )^{\alpha-1} + (\alpha-1)\cte$, $\phi = \frac{\eta}{1-\alpha}$ and $h(u) = (\alpha-1)u + (\alpha-1)\cte + 1$. Note that $\PE[g(\theta, Y_1)] = h(\bmuf(\theta))$ and $h^{\phi} = \GammaAlpha$.

\begin{enumerateList}

\item We start with the case $\phi \notin [0,1]$. Our goal is to apply \Cref{lem:gen:tcd}, which is a generalized version of the Dominated Convergence Theorem. To do so, first note that $h(\bmufk(\theta) )^\phi$ is positive and combining with the convexity of the mapping $u \mapsto u^\phi$, we have for all $M \in \nset^\star$ and for all $\theta \in \Tset$,
\begin{align}\label{eq:bound-TCD}
0 \leq h(\bmufk(\theta))^\phi \leq M^{-1} \sum_{m = 1}^M [g(\theta, Y_m)]^\phi \eqsp.
\end{align}
Since $\mu(|\bmuf|) < \infty$, the LLN for $\mu$-almost all $\theta \in \Tset$ yields
\begin{align}\label{eq:bmuk:lln}
\llim_{M \to \infty} \bmufk(\theta) = \bmuf(\theta) \eqsp.
\end{align}
Now applying successively (a) the LLN for $\mu$-almost all $\theta\in \Tset$ (as stated in \Cref{lem:lln:integrated}), which is valid under \eqref{eq:discretizeMinus0}, (b) Fubini's Theorem and (c) again the LLN
 \begin{multline}
\int_\Tset \mu(\rmd \theta) \lim_{M \to \infty} M^{-1} \sum_{m = 1}^M \{g(\theta, Y_m)\}^\phi \stackrel{(a)}{=} \int_\Tset \mu(\rmd \theta) \PE\lrb{\{g(\theta, Y_1)\}^\phi} \\ \stackrel{(b)}{=}  \PE\lrb{\int_\Tset \mu(\rmd \theta) [g(\theta, Y_1)]^\phi} \stackrel{(c)}{=} \lim_{M \to \infty} \int_\Tset \mu(\rmd \theta) M^{-1} \sum_{m = 1}^M [g(\theta, Y_m)]^\phi
\end{multline}
That is
$$
\mu \left(\lim_{M \to \infty} M^{-1} \sum_{m = 1}^M \{g(\cdot, Y_m)\}^\phi \right) = \lim_{M \to \infty} \mu \left( M^{-1} \sum_{m = 1}^M [g(\cdot, Y_m)]^\phi \right) < \infty
$$
Combining with \eqref{eq:bound-TCD} and \eqref{eq:bmuk:lln}, we apply \Cref{lem:gen:tcd} and obtain
$$
\mu \lr{  h(\bmuf)^\phi} = \mu \lr{ \lim_{M \to \infty}  h(\bmufk)^\phi} = \lim_{M \to \infty} \mu( h(\bmufk)^\phi) \eqsp,
$$
that is
$$
\mu \lr{  \GammaAlpha(\bmuf + \cte)} = \lim_{M \to \infty} \mu( \GammaAlpha(\bmufk + \cte)) \eqsp.
$$

\item We now turn to the case $\phi \in (0,1]$. Let $M' > 0$. Since
$$
\int_\Tset \mu(\rmd \theta) \lr{M^{-1}\sum_{m=1}^{M} g(\theta,Y_m)  \indiacc{g(\theta,Y_m)\leq M'}}^\phi\leq \mu( h(\bmufk )^\phi)\eqsp,
$$
the LLN for $\mu$-almost all $\theta\in \Tset$ (\Cref{lem:lln:integrated}) and the Dominated Convergence Theorem yields
\begin{equation}
\label{eq:discretize3}
 \int_\Tset \mu(\rmd \theta) \lr{\PE[g(\theta,Y_1)  \indiacc{g(\theta,Y_1)\leq M'}]}^\phi\leq \lliminf_{M \to \infty} \mu( h(\bmufk)^\phi) \eqsp.
\end{equation}
Using now $(u+v)^\phi \leq u^\phi+v^\phi$ and then Jensen's inequality for the concave mapping $u \mapsto u^\phi$,
\begin{multline*}
\mu( h(\bmufk)^\phi) \leq \int_\Tset \mu(\rmd \theta) \lr{M^{-1}\sum_{m=1}^{M} g(\theta,Y_m)  \indiacc{g(\theta,Y_m)\leq M'}}^\phi\\
 +\lr{ \int_\Tset \mu(\rmd \theta) M^{-1}\sum_{m=1}^{M} g(\theta,Y_m)  \indiacc{g(\theta,Y_m)> M'}}^\phi
\end{multline*}
By invoking the LLN for $\mu$-almost all $\theta\in \Tset$ (\Cref{lem:lln:integrated}) and the Dominated Convergence Theorem for the first term of the rhs and the LLN combined with Fubini for the second term, we get
\begin{multline*}
\llimsup_{M \to \infty} \mu( h(\bmufk)^\phi) \leq   \int_\Tset \mu(\rmd \theta) \lr{\PE[g(\theta,Y_1)  \indiacc{g(\theta,Y_1)\leq M'}]}^\phi\\
 +  \lr{\int_\Tset \mu(\rmd \theta) \PE[g(\theta,Y_1)  \indiacc{g(\theta,Y_1)> M'}]}^\phi
\end{multline*}
Letting $M'$ go to infinity both in this inequality and in \eqref{eq:discretize3} completes the proof of \eqref{eq:discretize0}.
\end{enumerateList}
\end{proof}

\subsection{General Dominated Convergence Theorem}

We state and prove a generalized version of the Dominated Convergence Theorem, adapted from \cite[Theorem 19]{royden2010real}. We provide here a full proof for the sake of completeness.
\begin{lemma}[General Dominated Convergence Theorem] \label{lem:gen:tcd}
Let $\zeta \in \meas{1}(\Tset)$. Assume there exist $(a_M)$, $(b_M), (c_M)$  three sequences of $(\Tsigma,{\mathcal B}(\rset))$-measurable functions such that the limits $\lim_{M \to \infty} a_M(\theta)$, $\lim_{M \to \infty} b_M(\theta)$, $\lim_{M \to \infty} c_M(\theta)$ exist for $\zeta$-almost all $\theta \in \Tset$ and
$$
\zeta|\lim_{M \to \infty} a_M| +\zeta|\lim_{M \to \infty} c_M|<\infty \eqsp.
$$
Assume moreover that for all $M \in \nset^\star$ and for $\zeta$-almost all $\theta \in \Tset$
\begin{align*}
a_M(\theta) \leq b_M(\theta) \leq c_M(\theta)
\end{align*}
and
\begin{align}\label{eq:gen:fatou}
\zeta( \lim_{M \to\infty} a_M) =  \lim_{M \to\infty} \zeta(a_M)  \\
\zeta( \lim_{M \to\infty} c_M) =  \lim_{M \to\infty} \zeta(c_M) \label{eq:gen:fatou2} \eqsp.
\end{align}
Then,
\begin{align*}
\zeta(\lim_{M \to\infty} b_M) = \lim_{M \to\infty} \zeta(b_M) \eqsp.
\end{align*}
\end{lemma}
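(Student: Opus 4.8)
The plan is to deduce the statement from the ordinary Fatou lemma applied to the two nonnegative sequences $c_M - b_M$ and $b_M - a_M$, which is the standard route for the generalized (one-sided, ``squeezed'') dominated convergence theorem. Write $a = \llim_{M\to\infty} a_M$, $b = \llim_{M\to\infty} b_M$ and $c = \llim_{M\to\infty} c_M$, all of which exist $\zeta$-almost everywhere by assumption. Passing to the limit in $a_M \leq b_M \leq c_M$ gives $a \leq b \leq c$ $\zeta$-a.e., hence $|b| \leq |a| + |c|$, so $b$ is $\zeta$-integrable because $\zeta|\lim_M a_M| + \zeta|\lim_M c_M| < \infty$. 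The same squeezing bound $|b_M| \leq |a_M| + |c_M|$ shows each $b_M$ is integrable (the integrability of $a_M$ and $c_M$ being implicit in the hypothesis that $\lim_M \zeta(a_M)$ and $\lim_M \zeta(c_M)$ are finite), so every integral below is finite and I may use the additivity $\zeta(f - g) = \zeta(f) - \zeta(g)$ freely.

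First I would apply Fatou's lemma to $c_M - b_M \geq 0$. Since $c_M - b_M \to c - b$ $\zeta$-a.e., Fatou yields $\zeta(c - b) \leq \lliminf_{M\to\infty} \zeta(c_M - b_M)$. Using the hypothesis $\zeta(c_M) \to \zeta(c)$, a convergent sequence which may therefore be pulled out of the $\liminf$, the right-hand side equals $\zeta(c) - \llimsup_{M\to\infty} \zeta(b_M)$. Cancelling the finite quantity $\zeta(c)$ on both sides gives $\llimsup_{M\to\infty} \zeta(b_M) \leq \zeta(b)$.

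Symmetrically, applying Fatou to $b_M - a_M \geq 0$ and using $\zeta(a_M) \to \zeta(a)$ gives $\zeta(b - a) \leq \lliminf_{M\to\infty} \zeta(b_M) - \zeta(a)$, i.e. $\zeta(b) \leq \lliminf_{M\to\infty} \zeta(b_M)$. Combining the two inequalities yields $\zeta(b) \leq \lliminf_{M\to\infty} \zeta(b_M) \leq \llimsup_{M\to\infty} \zeta(b_M) \leq \zeta(b)$, so the limit exists and $\lim_{M\to\infty} \zeta(b_M) = \zeta(b) = \zeta(\lim_{M\to\infty} b_M)$, as required. I do not expect a genuine obstacle: the argument is a routine two-sided Fatou estimate, and the only points requiring care are confirming the integrability of the $b_M$ (so that the integrals split additively) and keeping the correct sign when turning the $\liminf$ of a difference into a $\limsup$, which is precisely where the finiteness of $\zeta(a)$ and $\zeta(c)$ is used.
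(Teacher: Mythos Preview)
Your proof is correct and follows essentially the same route as the paper: both arguments apply Fatou's lemma to the nonnegative sequences $b_M - a_M$ and $c_M - b_M$, then use the assumed convergence of $\zeta(a_M)$ and $\zeta(c_M)$ to sandwich $\liminf_M \zeta(b_M)$ and $\limsup_M \zeta(b_M)$ against $\zeta(b)$. Your version is somewhat more explicit about the integrability of $b_M$ and the additivity of the integral, but the underlying idea is identical.
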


\begin{proof} We apply Fatou's Lemma combined with \eqref{eq:gen:fatou} and \eqref{eq:gen:fatou2} to the two non-negative, $(\Tsigma,{\mathcal B}(\rset))$-measurable functions $\theta \mapsto b_M(\theta) - a_M(\theta)$ and $\theta \mapsto c_M(\theta) - b_M(\theta)$ and we obtain
\begin{align*}
\zeta (\liminf_{M \to\infty} b_M) &\leq \liminf_{M \to\infty} \zeta(b_M) \\
\zeta (\liminf_{M \to\infty} -b_M) &\leq \liminf_{M \to\infty} \zeta(-b_M)
\end{align*}
which proves the lemma, as $\liminf_{M \to \infty} b_M(\theta)=\limsup_{M \to \infty} b_M(\theta)$ for $\zeta$-almost all $\theta \in \Tset$.
\end{proof}

\subsection{Integrated Law of Large Numbers}

Let $Y_1,Y_2,\ldots$ be i.i.d. random variables on the same probability space $(\Omega,\mcf,\PP)$ and let $f$ be a non-negative real-valued $(\Tsigma \otimes \mcf,{\mathcal B}(\rset_{\geq 0}))$-measurable function. We are interested in showing
\begin{align} \label{eq:LLNintegre}
\int_\Tset \zeta(\rmd \theta) \lim_{M \to\infty}  M^{-1}\sum_{m=1}^M f(\theta,Y_m)=\int_\Tset \zeta(\rmd \theta) \PE[f(\theta,Y_1)]
\end{align}
for $\zeta \in \meas{1}(\Tset)$ satisfying $\int_\Tset \zeta(\rmd \theta) \PE[f(\theta,Y_1)]<\infty$. While this result follows easily if we can show that
\begin{equation} \label{eq:LLNunif}
\PP \lr{\forall \theta \in \Tset, \eqsp \lim_{M \to\infty}  M^{-1}\sum_{m=1}^M f(\theta,Y_m)= \PE[f(\theta,Y_1)]}=1
\end{equation}
unfortunately the LLN only yields
$$
\PP \lr{\lim_{M \to\infty}  M^{-1}\sum_{m=1}^M f(\theta,Y_m)= \PE[f(\theta,Y_1)]}=1
$$
for $\zeta$-almost all $\theta\in \Tset$. The following lemma allows to show \eqref{eq:LLNintegre} without resorting to the much stronger identity \eqref{eq:LLNunif}.
\begin{lemma} \label{lem:lln:integrated}
Let $\zeta \in \meas{1}(\Tset)$ and assume that $\int_\Tset \zeta(\rmd \theta) \PE[f(\theta,Y_1)]<\infty$. Then, $\PP-\as$
\begin{align*}
\int_\Tset \zeta(\rmd \theta) \lim_{M \to\infty}  M^{-1}\sum_{m=1}^M f(\theta,Y_m)=\int_\Tset \zeta(\rmd \theta) \PE[f(\theta,Y_1)]\;.
\end{align*}
\end{lemma}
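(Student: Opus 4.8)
The plan is to circumvent the fact that the exceptional $\PP$-null set produced by the Strong Law of Large Numbers (SLLN) depends on $\theta$, by passing to the product space $\Tset \times \Omega$ and invoking Tonelli's theorem on nonnegative functions. First I would introduce the quantity $S_M(\theta, \omega) = M^{-1}\sum_{m=1}^M f(\theta, Y_m(\omega))$; since $f$ is $(\Tsigma \otimes \mcf, \mathcal B(\rset_{\geq 0}))$-measurable and each $Y_m$ is measurable, $S_M$ is $(\Tsigma \otimes \mcf)$-measurable, and hence so are $\lliminf_{M\to\infty} S_M$ and $\llimsup_{M\to\infty} S_M$. By Tonelli, $\theta \mapsto \PE[f(\theta, Y_1)]$ is $\Tsigma$-measurable as well, so I can define the $(\Tsigma \otimes \mcf)$-measurable indicator
$$
g(\theta, \omega) = \indiacc{\llim_{M \to \infty} S_M(\theta, \omega) = \PE[f(\theta, Y_1)]} \eqsp,
$$
the limit being understood in $[0, +\infty]$.

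The key input is that for each \emph{fixed} $\theta \in \Tset$, the classical SLLN for i.i.d.\ nonnegative variables yields $\PP$-almost sure convergence of $S_M(\theta, \cdot)$ to $\PE[f(\theta, Y_1)] \in [0, +\infty]$ (this holds even when the mean is infinite). In other words, $\int_\Omega g(\theta, \omega)\, \PP(\rmd \omega) = 1$ for every $\theta \in \Tset$. Applying Tonelli's theorem to the nonnegative function $g$ then gives
$$
\int_\Omega \lrb{ \int_\Tset g(\theta, \omega)\, \zeta(\rmd \theta) } \PP(\rmd \omega) = \int_\Tset \lrb{ \int_\Omega g(\theta, \omega)\, \PP(\rmd \omega) } \zeta(\rmd \theta) = \int_\Tset \zeta(\rmd \theta) = 1 \eqsp.
$$
Since $0 \leq \int_\Tset g(\theta, \cdot)\, \zeta(\rmd \theta) \leq 1$ and its $\PP$-integral equals $1$, I conclude that $\int_\Tset g(\theta, \omega)\, \zeta(\rmd \theta) = 1$ for $\PP$-almost all $\omega$.

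Finally, I would fix such an $\omega$. The relation $\int_\Tset g(\theta, \omega)\, \zeta(\rmd \theta) = 1$ means precisely that for $\zeta$-almost all $\theta$ one has $\llim_{M \to \infty} S_M(\theta, \omega) = \PE[f(\theta, Y_1)]$. The two functions of $\theta$, namely $\theta \mapsto \llim_{M\to\infty} S_M(\theta, \omega)$ and $\theta \mapsto \PE[f(\theta, Y_1)]$, therefore coincide $\zeta$-almost everywhere; integrating against $\zeta$ and using the hypothesis $\int_\Tset \PE[f(\theta, Y_1)]\, \zeta(\rmd\theta) < \infty$ (which also guarantees $\zeta$-integrability of both integrands) yields the desired identity, $\PP$-almost surely. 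The only delicate point is the joint-measurability bookkeeping combined with the use of Tonelli to merge the $\theta$-dependent null sets into a single $\PP$-null set; once that is set up, the remaining steps are routine.
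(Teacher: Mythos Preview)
Your proof is correct and takes essentially the same approach as the paper's: define the indicator $g$ of the set where the SLLN limit holds, use Fubini/Tonelli to show $\int_\Tset g(\theta,\omega)\,\zeta(\rmd\theta)=1$ for $\PP$-almost every $\omega$, and then integrate against $\zeta$. The only cosmetic difference is that you invoke the extended SLLN for nonnegative variables (valid even when the mean is infinite) at \emph{every} $\theta$, whereas the paper applies the finite-mean SLLN only at $\zeta$-almost every $\theta$; both routes land on the same conclusion.
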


\begin{proof}
Set
$$
B=\set{(\theta,\omega) \in \Tset \times\Omega}{\llim_{M \to \infty}  M^{-1}\sum_{m=1}^M f(\theta,Y_m(\omega))= \PE[f(\theta,Y_1)]}\eqsp,
$$
Let $\gamma_0: (\theta,\omega) \mapsto \indi{B^c}(\theta,\omega)$ and $\gamma_1=1-\gamma_0$. According to the Fubini Theorem and the LLN for $M^{-1}\sum_{m=1}^M f(\theta,Y_m)$ where $\theta$ is such that $\PE[f(\theta,Y_1)] < \infty$ (which is satisfied for $\zeta$-almost all $\theta \in \Tset$ by assumption),
$$
\PE\lrb{\int_\Tset \zeta(\rmd \theta)  \gamma_0(\theta, \cdot)}=\int_\Tset \zeta(\rmd \theta) \PE\lrb{  \gamma_0(\theta, \cdot)}=0 \eqsp.
$$
Therefore, $\int_\Tset \zeta(\rmd \theta)  \gamma_0(\theta, \cdot)$ is $\PP-\as$ null that is, there exists $\Omega_1$ such that $\PP(\Omega_1)=1$ and for all $\omega\in \Omega_1$, $A \mapsto \int_A \zeta(\rmd \theta)  \gamma_0(\theta, \omega)$ is the null-measure on $(\Tset,\Tsigma)$, which in turn implies that the measures $\zeta$ and $A \mapsto \int_A \zeta(\rmd \theta)\gamma_1(\theta,\omega)$ coincide. The latter property implies for all $\omega\in \Omega_1$,
\begin{align*}
\int_\Tset \zeta(\rmd \theta) \PE[f(\theta,Y_1)] &=\int_\Tset \zeta(\rmd \theta) \PE[f(\theta,Y_1)] \gamma_1(\theta,\omega)\\
&=\int_\Tset \zeta(\rmd \theta)\lrb{ \llim_{M \to \infty}  M^{-1}\sum_{m=1}^M f(\theta,Y_m(\omega))} \gamma_1(\theta,\omega)\\
&=\int_\Tset \zeta(\rmd \theta) {\llim_{M \to \infty}  M^{-1}\sum_{m=1}^M f(\theta,Y_m(\omega))}\eqsp.
\end{align*}
\end{proof}

\subsection{Proof of \Cref{prop:discretize}}
\label{subsec:proof:discretize}

\begin{proof}[Proof of \Cref{prop:discretize}] Recall that we have taken $\GammaAlpha(v) = [ \left(\alpha -1 \right)v + 1]^{\eta/(1-\alpha)}$. For the sake of readability, we only treat the case $\cte = 0$ in the proof of \Cref{prop:discretize}. Note that the case $\cte \neq 0$ unfolds similarly by replacing $\bmuf$ by $\bmuf + \cte$ everywhere in the proof below.

A first remark is that $\Psif(\mu) < \infty$ implies $\mu(|\bmuf|) < \infty$. This comes from the fact that for all $\alpha \in \rset$ and for all $u \in \rset_{>0}$, $u \falpha'(u) = \alpha \falpha(u) + (u-1)$ and we can write
\begin{align*}
\mu(|\bmuf[\mu]|) &\leq |\alpha| \int_\Yset \left|\falpha\lr{\frac{\mu k(y)}{p(y)}}\right| p(y)\nu(\rmd y) + \int_\Yset p(y) \nu(\rmd y) + 1 \eqsp.
\end{align*}
Under \ref{hyp:positive}, we have $\int_\Yset p(y) \nu(\rmd y) < \infty$, which settles the case $\alpha = 0$. As for the case $\alpha \in \rset \setminus \lrcb{0}$, we obtain from \Cref{lemma:bound} that the r.h.s is finite if and only if $\Psif(\mu)$ is finite, which is implied by the assumption $\Psif(\mu) < \infty$.

By the triangular inequality, for all $M \in \nset^\star$, for all $\theta \in \Tset$,
\begin{align*}
\left| \frac{\GammaAlpha(\bmufk(\theta))}{\mu(\GammaAlpha(\bmufk))}-\frac{\GammaAlpha(\bmuf(\theta))}{\mu(\GammaAlpha(\bmuf))}\right| \leq & \ \frac{\GammaAlpha(\bmufk(\theta))}{\mu(\GammaAlpha(\bmufk))}\left|1-\frac{\mu(\GammaAlpha(\bmufk))}{\mu(\GammaAlpha(\bmuf))} \right|\\
&+ \frac{|\GammaAlpha(\bmufk(\theta))-\GammaAlpha(\bmuf(\theta))|}{\mu(\GammaAlpha(\bmuf))}
\end{align*}
Thus,
\begin{align*}
\tv{\iterationK (\mu) -\iteration (\mu)}&=\mu \lr{\left| \frac{\GammaAlpha(\bmufk)}{\mu(\GammaAlpha(\bmufk))}-\frac{\GammaAlpha(\bmuf)}{\mu(\GammaAlpha(\bmuf))}\right|}\\
&\leq \left|1-\frac{\mu(\GammaAlpha(\bmufk))}{\mu(\GammaAlpha(\bmuf))} \right|+\frac{\mu(|\GammaAlpha(\bmufk)-\GammaAlpha(\bmuf)|)}{\mu(\GammaAlpha(\bmuf))}
\end{align*}
For the first term of the rhs, \Cref{lem:discretize} yields
\begin{align}\label{eq:TV:rhs1}
\lim_{M \to \infty} \left|1-\frac{\mu(\GammaAlpha(\bmufk))}{\mu(\GammaAlpha(\bmuf))} \right| = 0
\end{align}
As for the second term of the rhs, first note that for all $M \in \nset^\star$, for all $\theta \in \Tset$
\begin{align}\label{eq:TV:TCD1}
0 \leq |\GammaAlpha(\bmufk(\theta))-\GammaAlpha(\bmuf(\theta))| \leq \GammaAlpha(\bmufk(\theta)) + \GammaAlpha(\bmuf(\theta)) \eqsp,
\end{align}
and since $\mu(\GammaAlpha(\bmuf)) < \infty$ by assumption, the LLN for $\mu$-almost all $\theta \in \Tset$ yields
\begin{align}\label{eq:TV:TCD2}
\lim_{M \to \infty} \GammaAlpha(\bmufk(\theta)) = \GammaAlpha(\bmuf(\theta)) \eqsp.
\end{align}
Furthermore, since $\mu(\GammaAlpha(\bmuf)) < \infty$, \Cref{lem:discretize} and \eqref{eq:TV:TCD2} imply
\begin{align*}
\lim_{M \to \infty} \mu \left[ \GammaAlpha(\bmufk) + \GammaAlpha(\bmuf) \right] = \mu \left[ \lim_{M \to \infty} \left( \GammaAlpha(\bmufk) + \GammaAlpha(\bmuf) \right) \right] < \infty
\end{align*}
Combining with \eqref{eq:TV:TCD1} and \eqref{eq:TV:TCD2}, we apply \Cref{lem:gen:tcd} and obtain
\begin{align*}
\lim_{M \to \infty} \frac{\mu(|\GammaAlpha(\bmufk)-\GammaAlpha(\bmuf)|)}{\mu(\GammaAlpha(\bmuf))} = 0
\end{align*}
which, along with \eqref{eq:TV:rhs1}, finishes the proof.
\end{proof}

\section{Additional remarks}
\label{sec:addRes}

%
\begin{rem}[Assumption $\Psif(\mu_1)<\infty$  in \Cref{thm:admiss:spec}]\label{rep:hyp:admiss:spec}
The assumption $\Psif(\mu_1)<\infty$ can be discarded in \Cref{thm:admiss:spec} for all $\alpha \neq 0$. Indeed, for all $\alpha \in \rset$ and for all $u >0$, we have that $u \falpha'(u) = \alpha \falpha(u) + u-1$ and thus we can write
\begin{align*}
\mu_1(\bmuf[\mu_1]) & = \alpha \int_\Yset \falpha\lr{\frac{\mu_1 k(y)}{p(y)}} p(y)\nu(\rmd y) + \int_\Yset p(y) \nu(\rmd y) + 1 \eqsp.
\end{align*}
Under \ref{hyp:positive}, it holds that $\int_\Yset p(y) \nu(\rmd y) < \infty$. Combined with the fact that we have assumed that $\binfty < \infty$ in \Cref{thm:admiss:spec}, we obtain that $\Psif(\mu_1) < \infty$.
\end{rem}

\begin{rem}[Renyi-bound]\label{rem:RenyiBound}
For any variational density $q$, the ELBO and the Renyi-bound (its extension to the $\alpha$-divergence family) are respectively defined by
\begin{align*}
\mathcal{L}_{1}(q; \data) & \eqdef \int_\Yset \log \left( \frac{p(y, \data)}{q(y)} \right) q(y) \nu(\rmd y ) \\
\mathcal{L}_\alpha(q; \data) & \eqdef \frac{1}{1-\alpha} \log \left( \int_\Yset \left( \frac{p(y, \data)}{q(y)} \right)^{1-\alpha} q(y) \nu(\rmd y ) \right) \eqsp.
\end{align*}
Then, in our particular framework, we can write
\begin{align}\label{eq:renyibound}
\mathcal{L}_{1}(\hmu_n k, \data) &= -\sum_{j=1}^J \lbd[j]{n} \bmuf[\hmu_n](\theta_j) \\
\mathcal{L}_\alpha(\hmu_n k, \data) &= \frac{1}{1-\alpha} \log \left( (\alpha-1)\sum_{j=1}^J \lbd[j]{n} \bmuf[\hmu_n](\theta_j) + 1 \right) \eqsp. \nonumber
\end{align}
\end{rem}


\end{document}